\documentclass[preprint,aos]{imsart}
\RequirePackage{natbib}
\setattribute{journal}{name}{}

\RequirePackage[OT1]{fontenc}
\RequirePackage{amsthm,amsmath,natbib,graphicx,enumitem}
\RequirePackage[colorlinks,citecolor=blue,urlcolor=blue]{hyperref}
\RequirePackage{hypernat}
\usepackage{amssymb,tabularx,multicol,multirow,booktabs}
\usepackage[top=1in, bottom=1in, left=1in, right=1in]{geometry}
\usepackage{mhequ}
\usepackage[usenames,dvipsnames]{color}
\usepackage{epstopdf}
\RequirePackage{xr}
\externaldocument{supplement}

\setattribute{journal}{name}{}

\setcounter{secnumdepth}{5}
\startlocaldefs
\numberwithin{equation}{section}
\theoremstyle{plain}
\endlocaldefs

\def \bbeta{\boldsymbol{\beta}}

\def \be{\begin{equs}}
	\def \ee{\end{equs}}

\def \I{\mathcal{I}}

\def \E{\mathbb{E}}
\def \P{\mathbb{P}}
\def \Q{\mathbb{Q}}

\def \sumn{\sum_{i=1}^n}

\def \Pbeta {\mathbb{P}_{\boldsymbol{\beta},\lambda}}
\def \Pzero {\mathbb{P}_{\boldsymbol{\beta}=\mathbf{0},\lambda}}
\def \Ebeta {\mathbb{E}_{\boldsymbol{\beta},\lambda}}
\def \Ezero {\mathbb{E}_{\boldsymbol{\beta}=\mathbf{0},\lambda}}

\def \Vzero {\mathrm{Var}_{\boldsymbol{\beta}=\mathbf{0},\lambda}\left(HC(t)\right)}
\def \one {\boldsymbol{1}}
\def \delsup {\overline{\delta}}
\def \delinf {\underline{\delta}}
\def \dmax {\max_{i=1}^{n} d_i}
\def \Bin {\mathrm{Bin}}
\def \cdense {C_{\mathrm{dense}}}
\def \csparse {C_{\mathrm{sparse}}}
\def \cmax {C_{\mathrm{max}}}
\input binary_macro.tex

\begin{document}
\begin{frontmatter}
\title{Detection Thresholds for the $\beta$-Model on Sparse Graphs\thanksref{T1}}
\runtitle{Sharp Thresholds for $\beta$ model}
\thankstext{T1}{RM is a Stein Fellow at Department of Statistics, Stanford University. SS was partially supported by the William R. and Sara Hart Kimball Stanford Graduate Fellowship.}

\begin{aug}

\author{\fnms{Rajarshi} \snm{Mukherjee}\thanksref{m1}\ead[label=e2]{rmukherj@stanford.edu}},
\author{\fnms{Sumit} \snm{Mukherjee}\thanksref{m2}\ead[label=e1]{sm3949@columbia.edu}},
\and
\author{\fnms{Subhabrata} \snm{Sen}\thanksref{m1}\ead[label=e3]{ssen90@stanford.edu}}

\affiliation{Stanford University\thanksmark{m1}  and Columbia University \thanksmark{m2} }

\address{Department of Statistics\\
Sequoia Hall \\
390 Serra Mall, Stanford, CA- 94305. \\
\printead{e2}\\
\phantom{E-mail:\ }\printead*{e3}}

\address{Department of Statistics\\
1255 Amsterdam Avenue\\
New York, NY-10027. \\
\printead{e1}}

\end{aug}

\begin{abstract} In this paper we study sharp thresholds for detecting sparse signals in $\beta$-models for potentially sparse random graphs. The results demonstrate interesting interplay between graph sparsity, signal sparsity, and signal strength. In regimes of moderately dense signals, irrespective of graph sparsity, the detection thresholds mirror corresponding results in independent Gaussian sequence problems. For sparser signals, extreme graph sparsity implies that all tests are asymptotically powerless, irrespective of the signal strength. On the other hand, sharp detection thresholds are obtained, up to matching constants, on denser graphs. The phase transitions mentioned above are sharp. As a crucial ingredient, we study a version of the Higher Criticism Test which is provably sharp up to optimal constants in the regime of sparse signals. The theoretical results are further verified by numerical simulations.
\end{abstract}

\begin{keyword}[class=AMS]
\kwd[Primary ]{62G10}
\kwd{62G20}
\kwd{62C20}
\end{keyword}
\begin{keyword}
\kwd{Detection Boundary}
\kwd{Sparse Random Graphs}
\kwd{Beta Model}
\kwd{Higher Criticism}
\kwd{Sparse Signals}
\end{keyword}

\end{frontmatter}

\section{Introduction}
Real life networks are ubiquitous in the natural and social sciences. Over the last decade, a huge volume of data has been collected on the structure of large networks and dynamics on these systems. Specialists from a wide array of disciplines such as 
physics, biology, social science, computer science, and statistics have studied these complex systems using specialized techniques. This area has seen an explosion of research in the recent years, and the resulting 
confluence of diverse ideas has enriched this field of study.

From a statistical perspective, one avenue of research has focused on designing random graph models which exhibit some properties of real world networks such as a heavy-tailed degree distribution \citep{barabasi1999emergence}, a ``small world" characteristic  \citep{watts1998collective}, etc. There has also been some preliminary work on testing for goodness of fit of these models to real data \citep{bickel2011method}. 

A parallel line of research has focused on the design and analysis of algorithms for finding structural patterns or `motifs' in network data.  Finding these structures is of interest to practitioners in diverse research areas as they often represent functional relationships between the vertices.  
For example, in a social network setup, a group of vertices with a high edge density may represent a community of individuals who share some common characteristics, e.g. a common profession. 
Guarantees for relevant statistical algorithms are usually derived by analyzing their typical performance on specific random graphs. Prominent examples include finding a single planted community or finding two equal sized communities in a graph. The simpler problem of detecting the presence of these structural patterns is itself statistically challenging, as they are often rare, and thus one needs to design procedures which work in a low signal to noise ratio setup. This has been accomplished for the single community case in \citep{arias2013community, verzelen2015community}. This body of work connects to the broader statistical program of global testing against structured alternatives (\cite{Ingster4}, \cite{Jin1}, \cite{arias2005near},  \cite{arias2008searching}, \cite{addario2010combinatorial}, \cite{Jin2}, \cite{Ingster5}, \cite{Candes}, \cite{cai2014rate}, \cite{arias2015sparse}, \cite{mukherjee2015hypothesis}). 

It is well-known in social science that vertices of a network are often differentially ``attractive", with the more popular vertices having a higher tendency to form edges. Finding the vertices with higher ``attractiveness" is of interest in a number of different contexts--- they often represent highly influential nodes and practitioners might wish to screen these vertices for subsequent study. In an online social network, these vertices might actually represent spam accounts which should be detected and removed. 

In this paper, we formulate and address the question of detecting differential attractiveness of vertices in networks. While this question is statistically simpler, we expect that mathematically probing the limits of detection can also provide non-trivial information about the corresponding estimation question. The detection problem is of independent interest in a different context--- assume that given a network dataset, one wishes to fit a stochastic block model to capture the community structure present in the data. It is well known that differential attractiveness of vertices can confound and affect model fits in this case \citep{karrer2011stochastic, yan2014model}. Detecting the presence of this structural pattern is therefore imperative in this scenario. We believe that similar testing questions could be useful even in such setups.

\subsection{Framework}
\label{section:framework}
Formally, we observe a labeled graph $\mathcal{G}= (V,E)$. We define $|V| =n$ and we are mainly interested in the asymptotic regime where the graph size $n\to \infty$. The graph $\mathcal{G}$ can be equivalently described by the adjacency matrix ${\bf{Y}}=(Y_{ij} : 1\leq i,j \leq n)$, where $Y_{ij} = 1$ if $\{i,j\} \in E$ and $Y_{ij}=0$ otherwise. We note that trivially, $Y_{ij}= Y_{ji}$ while $Y_{ii}= 0$, as we consider graphs without loops. 


Next, we specify a statistical model for the graphs in this context. 
For any ${\bbeta}:=(\beta_1,\cdots,\beta_n)\in\mathbb{R}^n$ and $\lambda\in [1,n]$, we define a probability distribution on the space of labelled graphs by setting
\begin{align}
\P_{{\bbeta},\lambda}({\bf Y})=\prod_{1\le i<j\le n}p_{ij}^{Y_{ij}}(1-p_{ij})^{Y_{ij}},\quad p_{ij}:=\frac{\lambda}{n}\frac{e^{\beta_i+\beta_j}}{1+e^{\beta_i+\beta_j}}. \label{eq:model_main}
\end{align}
Thus under the distribution $\P_{\bbeta,\lambda}$, the edges are mutually independent with $Y_{ij}\sim \Bin(1,p_{ij})$. The dependence of $\P_{\bbeta,\lambda}$ on $n$ will be suppressed throughout. $\beta_i$ should be interpreted as the ``attractiveness" of vertex $i$, while $\lambda$ controls the sparsity of the graph obtained. We will assume throughout $\lambda$ is known  and suppress the dependence of $\lambda$ on $n$. We note that the model \eqref{eq:model_main} is intimately related to the $p_1$-model of \cite{holland1981exponential} and the $\beta$-model of \cite{chatterjee2011random}. See Section \ref{section:betamodel} for background on these models and connections to \eqref{eq:model_main}. We discuss more on the choice of the model \eqref{eq:model_main} and the knowledge of $\lambda$ in Section \ref{section:discussion}.

In the context of the above model, we will formulate our problem as a goodness-of-fit type global null hypothesis testing problem against a structured hypothesis. To this end, we define the parameter space 
\begin{align}
\Xi(s,A):=\{\bbeta\in \mathbb{R}_{+}^n: |S(\bbeta)|=s, \beta_i\ge A, i\in S(\bbeta)\}, \label{eq:parameterspace}
\end{align}
where $S(\bbeta):=\{1\le i\le n:\beta_i\ne 0\}$ and $\mathbb{R}_{+} = [0, \infty)$. The vertices $i \in S(\bbeta)$ should be interpreted as the popular vertices. Since we expect such vertices to be rare, mathematically we consider the following sequence of hypothesis testing problems
\be 
H_0: \bbeta =\mathbf{0} \quad \textrm{vs.} \quad H_1: \bbeta \in \Xi(s_n, A_n) \subset \mathbb{R}_+^n\setminus \{\mathbf{0}\} \label{eqn:hypo}
\ee
for any pair of sequences $s_n, A_n$. Throughout we parametrize signal sparsity $s_n=n^{1-\alpha}$ with $\alpha \in (0,1)$. A statistical test for $H_0$ versus $H_1$  is a measurable $\{0,1\}$ valued function of the data $\bY$, with $1$ denoting the rejection of the null hypothesis $H_0$ and $0$ denoting the failure to reject $H_0$. The worst case risk of a test $T_n(\bY)$ is defined as 
\be \label{eqn:risk}
\mathrm{Risk}_n(T_n,\Xi (s_n, A_n))&:=\P_{\mathbf{0},\lambda}\left(T_n=1\right)+\sup_{\bbeta \in \Xi(s_n, A_n)}\P_{\bbeta, \lambda}\left(T_n=0\right). \label{eq:risk}
\ee
A sequence of tests $T_n$ corresponding to a sequence of model-problem pairs \eqref{eq:model_main}-\eqref{eqn:hypo}, is said to be asymptotically powerful (respectively asymptotically  powerless) against $\Xi(s_n ,A_n)$ if $$\limsup\limits_{n\rightarrow \infty}\mathrm{Risk}_n(T_n,\Xi(s_n, A_n)= 0\text{ (respectively }\liminf\limits_{n\rightarrow \infty}\mathrm{Risk}_n(T_n,\Xi(s_n, A_n)=1).$$

\subsection{Background on the $\beta$-model for random graphs}
\label{section:betamodel}
The study of degree sequences of network data has a long and rich history (\cite{holland1981exponential}, \cite{fienberg1981categorical}, \cite{robins2007introduction}, \cite{goodreau2007advances}, \cite{barvinok2013number}). The simplest model for networks based on the degree sequence is an exponential family with the degree sequence as its sufficient statistic. This model is a special case of the $p_1$ model of \cite{holland1981exponential}, and will be called the $\beta$-model, following recent terminology of \cite{chatterjee2011random}. We will not survey the vast literature on the $p_1$ model and its applications and instead refer the reader to \cite{blitzstein2011sequential} for detailed references. The undirected model can be equivalently described as follows: given a vector $\bbeta = (\beta_1, \cdots, \beta_n) \in \mathbb{R}^n$, we form a graph on the vertex set $V$ with $|V|= n$ and edges are added independently with probability 
\begin{align}
p_{ij} =\frac{ \exp{(\beta_i + \beta_j)}}{1 +  \exp{(\beta_i + \beta_j)}}. \label{eq:model_original}
\end{align}
We note that the model \ref{eq:model_main} and \ref{eq:model_original} are identical except for the leading $\lambda/n$ factor. This factor is introduced to control the sparsity of the graph. The original $\beta$-model leads to dense graphs, where a graph on $n$ vertices has $O_P(n^2)$ edges. However, it is widely accepted that real networks are seldom dense, and thus to address this issue, we introduce an additional parameter $\lambda$. 

The original model \eqref{eq:model_original} has been studied widely in recent years. It is known to be the maximum entropy distribution given the degree distribution (see \cite{blitzstein2011sequential}). \cite{lauritzen2002rasch, lauritzen2008exchangeable} characterized these models as natural models for \textit{weakly summarized} exchangeable binary arrays, i.e. arrays with distributions determined by row and column totals. Statistical analysis of the model \eqref{eq:model_original} has also received a lot of attention --- the existence and consistency of MLE's in these models was examined by \cite{chatterjee2011random} and \cite{rinaldo2013maximum}, and normal fluctuation for the MLE was established in \cite{yan2013central}. \cite{perry2012null} also study a general model which includes the $\beta$-model as a special case. Finite sample analysis of these models using Markov Bases has also been explored in \cite{petrovic2010algebraic, hara2010connecting, ogawa2013graver, yan2013central}. {
In a different direction, \cite{karwa2016differential} studies differentially private parameter estimation in the $\beta$ model, while  \cite{hillar2013entropy} and \cite{yan2015normality} generalize the $\beta$ model to weighted graphs. We also refer the reader to \cite{yan2016asymptotics} for asymptotic results on a general family of network models, which includes the $\beta$ model as a special case.}

\subsection*{Notation} For any $n \in \mathbb{N}$, we let $[n]=\{1,\ldots,n\}$. For any $i\in [n]$ we denote the degree of vertex $i$ by $d_i:=\sum_{j=1}^nY_{ij}=\sum_{j\ne i}Y_{ij}.$  Throughout $\Bin(n,p)$ will stand for a generic binomial random variable with $n\in \mathbb{N}$ trials and success probability $p \in [0,1]$. The results in this paper are mostly asymptotic in nature and thus requires some standard asymptotic  notations. If $a_n$ and $b_n$ are two sequences of real numbers then $a_n \gg b_n$ (and $a_n \ll b_n$) implies that ${a_n}/{b_n} \rightarrow \infty$ (respectively ${a_n}/{b_n} \rightarrow 0$) as $n \rightarrow \infty$. Similarly $a_n \gtrsim b_n$ (and $a_n \lesssim b_n$) implies that $\liminf{{a_n}/{b_n}} = C$ for some $C \in (0,\infty]$ (and $\limsup{{a_n}/{b_n}} =C$ for some $C \in [0,\infty)$). Alternatively, $a_n=o(b_n)$ will also imply $a_n \ll b_n$ and $a_n=O(b_n)$ will imply that $\limsup{{a_n}/{b_n}} =C$ for some $C \in [0,\infty)$). We write $a_n =\Theta(b_n)$ if both $a_n=O(b_n)$ and $b_n=O(a_n)$. We write $a_n \sim b_n$ if $\lim \frac{a_n}{b_n}\rightarrow 1$. For any fixed  tuple $v$ of real numbers, $C(v)$ will denote a constant depending on elements of $v$ only. Also, throughout we drop the subscript $n$ whenever it is understood that $s, A, \lambda$ are allowed to vary with $n$.

\section{Tests}
\label{section:tests}

The tests used in this paper for the purpose of asymptotically sharp detection are all based on the degree vector $(d_1,\ldots,d_n)$. All the tests aim to capture the idea that the coordinates of the degree vector are stochastically increasing according to the corresponding coordinates of $\bbeta$.
\begin{description}[align=left]\itemsep15pt
	\item [\textbf{Total Degree Test} :] This test is based on the total degree in the observed graph i.e. $\sum_{i=1}^n d_i$. The test rejects when the observed total degree is large. The calibration of this test can be achieved by looking at the behavior of $\sum_{i=1}^n d_i$ under the null hypothesis in \eqref{eqn:hypo}. More precisely, by the Total Degree Test we mean a testing procedure which rejects when $\sum_{i=1}^n d_i-\frac{\lambda (n-1)}{2}$ is large (See proof of Theorem \ref{thm:dense}). 
	
	\item [\textbf{Maximum Degree Test} :] This test is based on the maximum degree in the observed graph i.e. $\max\limits_{i=1}^n d_i$.  The distribution of  $\max\limits_{i=1}^n d_i$ under the null hypothesis for $\lambda\gg \log^3{n}$ is standard \citep{bollobas}. However, for the theoretical calibration of an asymptotically powerful test, we simply need suitable control over the tail of the maximum degree (See proof of Theorem \ref{thm:max_degree_test}). In the rest of the paper, by Maximum Degree Test we shall mean a testing procedure that rejects when the observed maximum degree is large.

	\item [\textbf{Higher Criticism Test} :] This test is based on suitably scanning over centered and scaled survival statistics of the degree vector. More precisely, for any $t>0$ we let 
	\be 
	HC(t):=\sum_{i=1}^n \left(\I\left(D_i>t\right)-\Pzero\left(D_i>t\right)\right),
	\ee
	$$D_i=\frac{d_i-\frac{\lambda}{2n}(n-1)}{\sqrt{(n-1)\frac{\lambda}{2n}\left(1-\frac{\lambda}{2n}\right)}}, \quad i=1,\ldots,n.$$
	We then construct a version of the higher criticism test as follows. Define
	\be 
	HC:=\sup\left\{GHC(t):=\frac{HC(t)}{\sqrt{Var_{\boldsymbol{\beta}=\mathbf{0},\lambda}\left(HC(t)\right)}}, t\in \{\sqrt{2r\log{n}}:r\in (0,5)\}\cap \mathbb{N}\right\}. 
	\ee
	By Higher Criticism Test we then mean a testing procedure that rejects when the observed value of $HC$ defined above is large (See proof of Theorem \ref{thm:sparse} \ref{thm:sparse_hopeful} \ref{thm:sparse_upperbound}). Indeed this test is designed along the tradition of tests introduced and studied in recent history of sparse signal detection in sequence and regression models (\cite{Jin1,Jin2}; \cite{Candes}; \cite{arias2015sparse}; \cite{mukherjee2015hypothesis}; \cite{barnett2016ghc}). However, in spite of the similarity in philosophy of construction of the test, sharp analysis of the test turns out to be subtle due to the presence of dependence among the degree sequence. To our best knowledge, this is the first instance of sharp analysis of the Higher Criticism Test for a dependent Binomial sequence. This is one of the main technical contributions of the paper, since tackling these dependence issues is fundamentally different from analyzing a weakly dependent Gaussian sequence \citep{Candes}. A glance at our proof reveals that the case of sparser graphs, with $\lambda$ behaving polylogarithmically in $n$ is relatively easier. This is intuitively reasonable, since sparse graphs implies ``weaker" dependence among the degrees. Since our results are valid for any $\lambda\gg \log{n}$, more care is needed to attend to the dependence structure in the problem. 
\end{description}

\section{Main Results}
\label{section:main_results}
We divide the main results according to signal sparsity $\alpha \in (0,1)$. Our first theorem corresponds to the dense regime of signal sparsity i.e. $\alpha\leq \frac{1}{2}$.
\begin{theorem}\label{thm:dense}
Suppose $\alpha \le \frac{1}{2}$ and let
\be
\cdense(\alpha) = \frac{1}{2}-\alpha.
\ee

\begin{enumerate}[label=\textbf{\roman*}.]
\item\label{thm:dense_upper}
The Total Degree Test is asymptotically powerful if \be\label{eq:upper_dense}
\tanh(A)\geq \frac{n^{-r}}{\sqrt{\lambda}}, \quad r<\cdense(\alpha).
\ee

\item\label{thm:dense_lower}
All tests are asymptotically powerless if 
\be\label{eq:lower_dense}
\tanh(A)\leq \frac{n^{-r}}{\sqrt{\lambda}}, \quad r>\cdense(\alpha).
\ee

\end{enumerate}

\end{theorem}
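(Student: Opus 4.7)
For part \ref{thm:dense_upper}, the plan is to analyze the Total Degree statistic $T_n := \sum_{i=1}^n d_i = 2\sum_{i<j}Y_{ij}$ via Chebyshev's inequality. Under $H_0$, each $Y_{ij}$ is independent $\Bin(1,q)$ with $q := \lambda/(2n)$, so $\Ezero[T_n]=\lambda(n-1)/2$ and its variance is at most $\lambda n$. For any $\bbeta\in\Xi(s,A)$ the mean shift is
\[\Ebeta[T_n] - \Ezero[T_n] = \frac{\lambda}{n}\sum_{i<j}\tanh\Big(\tfrac{\beta_i+\beta_j}{2}\Big)\ge \frac{\lambda}{n}\,s(n-s)\tanh(A/2)\gtrsim \lambda s\tanh(A),\]
using that every edge with exactly one endpoint in $S(\bbeta)$ contributes at least $\tanh(A/2)$ together with the elementary inequality $\tanh(A)\le 2\tanh(A/2)$; the variance under the alternative is also $O(\lambda n)$. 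Thus the signal-to-noise ratio is $\gtrsim s\tanh(A)\sqrt{\lambda/n}$, which diverges under \eqref{eq:upper_dense} whenever $r<\cdense(\alpha)$. Placing the rejection cutoff a diverging multiple of $\sqrt{\lambda n}$ above $\Ezero[T_n]$ and applying Chebyshev under both hypotheses then yields vanishing Type~I and Type~II errors.

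For part \ref{thm:dense_lower}, I would reduce the lower bound to a $\chi^2$-divergence computation via Le~Cam's method. Let $\pi$ be the prior that samples $S$ uniformly from $\binom{[n]}{s}$ and sets $\beta_i = A\,\mathbf{1}(i\in S)$, and let $\overline{\P}:=\int\Pbeta\,d\pi$. Since $\overline{\P}$ is supported on $\Xi(s,A)$ and $\risk_n(T,\Xi(s,A))\ge 1-\tfrac12\sqrt{\chi^2(\overline{\P},\Pzero)}$ for every test $T$, it suffices to show $\chi^2(\overline{\P},\Pzero)\to 0$. Edge-wise independence under every $\Pbeta$ yields the exact identity
\[1 + \chi^2(\overline{\P},\Pzero) = \E_{S,S'}\prod_{i<j}\Big(1 + \frac{(p_{ij}^S-q)(p_{ij}^{S'}-q)}{q(1-q)}\Big),\]
where $S,S'$ are i.i.d.\ copies from $\pi$. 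Each factor depends only on the pair-type $(|\{i,j\}\cap S|,|\{i,j\}\cap S'|)\in\{0,1,2\}^2$, with nonzero values proportional to $\tanh^2(A)$, $\tanh(A)\tanh(A/2)$, or $\tanh^2(A/2)$. Using $1+x\le e^x$ and summing the logarithms, the total exponent reduces to a quadratic $\Phi(W):=\alpha W^2+\beta W+\gamma$ in $W:=|S\cap S'|$, with $|\alpha|\lesssim c$, $|\beta|\lesssim nc$ and $\gamma\lesssim s^2 c$, where $c:=\lambda\tanh^2(A)/n$.

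The final step is to bound $\E_{S,S'}[e^{\Phi(W)}]$. I would use the trivial bound $W\le s$ to absorb the quadratic piece as $|\alpha|W^2\lesssim s^2 c$, and handle the linear-in-$W$ piece via the convex-order domination of $W\sim\mathrm{Hyp}(n,s,s)$ by $\Bin(s,s/n)$, which yields the MGF bound $\E[e^{tW}]\le\exp\big((s^2/n)(e^t-1)\big)$. Under \eqref{eq:lower_dense}, $nc\lesssim n^{-2r}\to 0$ since $r>\cdense(\alpha)\ge 0$, while
\[\frac{s^2}{n}\big(e^{nc}-1\big)\sim s^2 c\lesssim n^{1-2\alpha-2r}\to 0 \quad\text{iff}\quad r>\tfrac{1}{2}-\alpha=\cdense(\alpha),\]
so $\E_{S,S'}[e^{\Phi(W)}]=1+o(1)$ and the conclusion follows. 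The main technical obstacle I expect is the careful bookkeeping of pair-type contributions that yields the quadratic form of $\Phi$, and especially the verification that the linear-in-$W$ coefficient is only $O(nc)$: a naive use of the bound $W\le s$ on this term would overshoot the correct threshold by a factor of $n/s$, and sharpness in $\cdense(\alpha)=\tfrac12-\alpha$ comes from exploiting $\E W=s^2/n\ll s$ via the hypergeometric MGF.
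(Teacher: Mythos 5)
Your proposal is correct and follows essentially the same route as the paper: part (i) is the same Chebyshev argument with the total-degree mean shift $\gtrsim \lambda s\tanh(A/2)$ against variance $O(n\lambda)$, and part (ii) is the same truncation-free second-moment (equivalently $\chi^2$) bound with the uniform prior on exactly-$A$ configurations, reduction of the per-pair factors to a quadratic exponent in the overlap $|S\cap S'|$, and a hypergeometric-to-binomial MGF bound. The only differences are cosmetic (centered Bernoulli identity with $\tanh$ factors versus the paper's Taylor bound $1+CA^2\lambda/n$, and convex-order domination by $\Bin(s,s/n)$ versus stochastic domination by $\Bin(s,s/(n-s))$).
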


We note that one can actually state a slightly stronger result which dictates that all tests are asymptotically powerless when $\lim_{n\rightarrow\infty}s\tanh(A)\sqrt{\frac{\lambda}{n}}=0$ while the Total Degree test is asymptotically powerful whenever there exists a diverging sequence $t_n \rightarrow \infty$ such that $s\tanh(A)\sqrt{\frac{\lambda}{n}}\gg t_n$.

Our next result characterizes the detection thresholds in the sparser regime i.e. $\alpha>\frac{1}{2}$. Unlike the denser regime $(\alpha \leq \frac{1}{2})$, the results here depend on the level of graph sparsity $\lambda$. In particular for small $\lambda$'s (very sparse graphs) all tests turn out to be asymptotically powerless irrespective of the signal strength.
\begin{theorem}\label{thm:sparse}
Suppose $\alpha\in (1/2,1)$, $\theta:=\lim_{n\rightarrow \infty}\frac{\lambda}{2n}$, and let
\be
\csparse(\alpha) = \begin{cases}
16(1-\theta)\left(\alpha-\frac{1}{2}\right), &\frac{1}{2}< \alpha<\frac{3}{4}\\
16(1-\theta)\left(1-\sqrt{1-\alpha}\right)^2, & \alpha\geq \frac{3}{4}.
\end{cases}
\ee

\begin{enumerate}[label=\textbf{\roman*}.]

\item \label{thm:sparse_hopeless} Assume $\lambda\ll \log{n}$. Then all tests are asymptotically powerless irrespective of $A$.
\item \label{thm:sparse_hopeful} Assume $\lambda \gg \log{n}$.
\begin{enumerate}[label=\textbf{\alph*})]
\item\label{thm:sparse_upperbound} The Higher Criticism Test is asymptotically powerful if
\begin{align*}
\tanh(A)\ge \sqrt{\frac{C^*\log n}{\lambda}}, \quad C^*>\csparse(\alpha).
\end{align*}  

\item\label{thm:sparse_lowerbound}
All tests are asymptotically powerless if
\begin{align*}
\tanh(A)\leq \sqrt{\frac{C^*\log n}{\lambda}}, \quad C^*<\csparse(\alpha).
\end{align*}  
\end{enumerate}

\end{enumerate}
\end{theorem}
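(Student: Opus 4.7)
The plan is to handle the three assertions of Theorem \ref{thm:sparse} with closely related machinery: Le Cam's second-moment method for the two powerlessness claims \ref{thm:sparse_hopeless} and \ref{thm:sparse_lowerbound}, and a direct mean-variance analysis of the Higher Criticism statistic for the positive claim \ref{thm:sparse_upperbound}.

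For the lower bounds I would use a product Bernoulli-$\varepsilon$ prior: independently for each $i$, set $\beta_{i}=A$ with probability $\varepsilon=s/n$ and $\beta_{i}=0$ otherwise, and denote the induced mixture by $\Q$. Edge-wise independence of $\Pbeta$ and the standard identity for the chi-squared of product mixtures yield
\begin{equation*}
\Ezero\!\left[\left(\frac{d\Q}{d\Pzero}\right)^{\!2}\right]=\E_{\bbeta,\bbeta'}\prod_{i<j}\left(1+\frac{(p_{ij}^{\bbeta}-p_{ij}^{0})(p_{ij}^{\bbeta'}-p_{ij}^{0})}{p_{ij}^{0}(1-p_{ij}^{0})}\right),
\end{equation*}
with $\bbeta,\bbeta'$ independent draws. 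Using the elementary identities $\frac{e^{A}}{1+e^{A}}-\frac12=\frac12\tanh(A/2)$ and $\frac{e^{2A}}{1+e^{2A}}-\frac12=\frac12\tanh(A)$, the per-edge increment $p_{ij}^{\bbeta}-p_{ij}^{0}$ equals $\frac{\lambda}{2n}\tanh(A/2)$ when exactly one endpoint carries signal and $\frac{\lambda}{2n}\tanh(A)$ when both do. For part \ref{thm:sparse_hopeless} ($\lambda\ll\log n$), the crude bound $\tanh\le 1$ makes each factor $O(\lambda/n)$; an exponential bound then gives $\chi^{2}(\Q,\Pzero)\le\exp(O(s^{2}\lambda/n))-1=o(1)$ whenever $\alpha>1/2$ and $\lambda\ll\log n$, irrespective of $A$. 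For part \ref{thm:sparse_lowerbound} I would truncate the prior above an appropriate level $A_{0}$ so the truncated and full mixtures have vanishing total variation, then expand the chi-squared by conditioning on the overlap $|S\cap S'|$ of the two random signal sets. After exponentiating and matching powers of $n$, the sum reduces to the familiar Donoho-Jin calculation with effective per-vertex signal-to-noise ratio $\mu(A):=\tanh(A/2)\sqrt{\lambda/(2(1-\theta))}$, and the two-regime formula $\csparse(\alpha)=16(1-\theta)\rho^{*}(\alpha)$ with $\rho^{*}(\alpha)=\alpha-\frac12$ or $(1-\sqrt{1-\alpha})^{2}$ falls out of optimizing the truncation level.

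For part \ref{thm:sparse_upperbound} I would analyze the fixed-threshold statistic $GHC(t_{r})$ at each candidate $t_{r}=\sqrt{2r\log n}$ in the discrete scan set, and then union bound. Under the null, Berry-Esseen combined with $\lambda\gg\log n$ gives $\Pzero(D_{i}>t_{r})\sim\phibar(t_{r})\sim n^{-r}/\sqrt{4\pi r\log n}$ uniformly over the scan. Under the alternative, the mean-shift calculation above inflates each signal vertex's tail to $\phibar(t_{r}-\mu(A))$, so $\Ebeta[HC(t_{r})]\gtrsim s\,\phibar(t_{r}-\mu(A))$. Choosing $r=\rho^{*}(\alpha)$ and $C^{*}>\csparse(\alpha)$ makes this mean dominate $\sqrt{\mathrm{Var}_{0}(HC(t_{r}))}$ by a polynomial factor in $n$, and Chebyshev's inequality then yields vanishing Type II error.

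The main obstacle will be controlling $\mathrm{Var}_{0}(HC(t_{r}))$ and the upper tail of $HC(t_{r})$ in the presence of dependence inherited from the graph; this is where the Binomial-dependent setting departs qualitatively from the weakly-dependent Gaussian case \citep{Candes}. Since $d_{i}$ and $d_{j}$ share exactly the single edge $Y_{ij}$, I would decompose $d_{i}=Y_{ij}+d_{i}^{(-j)}$ with $d_{i}^{(-j)}$ and $d_{j}^{(-i)}$ conditionally independent given $Y_{ij}$, and then compute $\mathrm{Cov}_{0}(\mathbf{1}(D_{i}>t_{r}),\mathbf{1}(D_{j}>t_{r}))$ by conditioning on $Y_{ij}$. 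Each off-diagonal covariance works out to order $(\lambda/n)\phibar(t_{r})^{2}$, so the $\binom{n}{2}$ off-diagonal contributions stay below the $O(n\phibar(t_{r}))$ diagonal. This calculation in turn requires a uniform Cramer-type moderate-deviation approximation for the binomial tail up to $t=\sqrt{10\log n}$, which is precisely why the hypothesis $\lambda\gg\log n$ is sharp and why the proof is noticeably more delicate than in the polylogarithmic-$\lambda$ regime.
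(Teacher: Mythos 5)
Your plan for part \ref{thm:sparse_hopeless} is essentially the paper's argument (a second moment bound using $\tanh\le 1$ and the moment generating function of the overlap $|S\cap S'|$), though note the correct bound is of the form $\exp\bigl(C\tfrac{s^2}{n}(e^{c\lambda}-1)\bigr)$ rather than $\exp(O(s^2\lambda/n))$: the overlap contributes $\approx nZ$ active edges, and it is the factor $e^{c\lambda}=n^{o(1)}$ that uses $\lambda\ll\log n$; the conclusion is unaffected. For part \ref{thm:sparse_upperbound}, however, two concrete steps fail as written. First, the shared-edge covariance is \emph{not} of order $(\lambda/n)\phibar(t_r)^2$. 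Conditioning on $Y_{12}$ gives $\mathrm{Cov}_0\bigl(\one(D_1>t),\one(D_2>t)\bigr)=p(1-p)\,(a'(t)-a''(t))^2$ with $p=\lambda/2n$, where $a'(t)-a''(t)$ is the \emph{point} probability that the residual $\Bin(n-2,\lambda/2n)$ degree hits the threshold exactly; by a local-limit estimate this is $n^{-r+o(1)}/\sqrt{\lambda}$, so the covariance is $n^{-1-2r+o(1)}$, independent of $\lambda$, and the $\binom{n}{2}$ off-diagonal terms total $n^{1-2r+o(1)}$, safely below the diagonal $n^{1-r+o(1)}$ for every $\lambda\gg\log n$. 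Under your stated order the off-diagonal total would be $\lambda n^{1-2r}$, which \emph{dominates} the diagonal when $\lambda\asymp n$ (i.e.\ $\theta>0$) and $r<1$ --- exactly the dense-graph regime covered by the theorem --- so your own comparison does not go through; the local-limit refinement is the key point, and it is the content of the paper's Lemma 6.2(a,i). Second, evaluating at $r=\rho^*(\alpha)$ does not give mean dominance in the range $\tfrac12<\alpha<\tfrac34$: with $r=\alpha-\tfrac12$ and $\rho:=C^*/(16(1-\theta))\downarrow\alpha-\tfrac12$, the mean exponent is $\approx 1-\alpha$ while the null-sd exponent is $(1-r)/2=(3/2-\alpha)/2>1-\alpha$. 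One must scan at $r=\min\{1,\,C^*/(4(1-\theta))\}$ (roughly twice the standardized signal), as the paper does; and Chebyshev under the alternative also requires $\mathrm{Var}_{\bbeta,\lambda}(HC(t))$, which needs a further (signal--signal, null--null, signal--null) covariance decomposition of the same local-limit type.

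For part \ref{thm:sparse_lowerbound} the proposed truncation is the wrong device. The prior places all signal coordinates at the fixed value $A$, so ``truncating the prior above a level $A_0$'' either does nothing or replaces $A$ by $A_0<A$, and the two mixtures are then \emph{not} close in total variation; moreover powerlessness cannot be transferred from a weaker to a stronger signal by monotonicity, so this cannot repair the argument. The actual obstruction is that for $\alpha\ge 3/4$ (more precisely for $C^*$ between $16(1-\theta)(\alpha-\tfrac12)$ and $16(1-\theta)(1-\sqrt{1-\alpha})^2$) the untruncated, overlap-conditioned second moment diverges because of the heavy upper tail of $L_S$ driven by unusually large signal-vertex degrees. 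The cure is a \emph{data-event} truncation: in the paper one intersects with $\Gamma_S=\bigcap_{i\in S}\bigl\{\sum_{j\in S^c}Y_{ij}\le (n-s)\tfrac{\lambda}{2n}+\sqrt{2\log n}\,\mathrm{sd}\bigr\}$, proves $\Ezero\tilde L=1-o(1)$ via a binomial change of measure plus the moderate-deviation estimate (this is where $\lambda\gg\log n$ and $C^*<\csparse(\alpha)$ enter), and then bounds $\Ezero\tilde L^2$. Even that second step is not a ``familiar Donoho--Jin calculation'': one must first isolate the factor of $L_{S_1}L_{S_2}$ depending on edges from $S_1\cap S_2$ to $(S_1\cup S_2)^c$ (showing the remaining factor has null expectation $1+o(1)$), apply the change of measure to a $\Bin(n-2s+Z,\cdot)$ with tilted success probability, and only then run the overlap/hypergeometric computation with the case split $C^*\lessgtr 4(1-\theta)$. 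These steps --- the choice of truncation event and the disentangling of the degree dependence --- are the technical heart of the result and are missing from the sketch.
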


\begin{remark}
The detection limits presented above are reminiscent of those in \cite{mukherjee2015hypothesis}. However, where results in \cite{mukherjee2015hypothesis} in essence correspond to $\lambda=n$, Theorem \ref{thm:sparse} presents a richer class of phase transitions by taking into account a continuous family of parameters $\theta:=\lim \lambda/2n$. 
However, the proof of such a result is simpler than that presented here due to the lack of dependence structure in traditionally studied regression models. Of course, the essence of our proof rests on understanding and carefully isolating the dependence structure present, and thereby reducing the calculations to a simpler independent binomial sequence problem. The subtlety of the proof relies on keeping track of the errors accumulated in such reductions. Finally, although the proof of the lower bound (Theorem \ref{thm:sparse}\ref{thm:sparse_hopeful}\ref{thm:sparse_lowerbound}) follows a general scheme of bounding a truncated second moment, deciding on the truncation event is subtle. 
\end{remark}	

\begin{remark}
{The result that all tests are asymptotically powerless when $\lambda \ll \log{n}$ is intuitive since
 according to our parametrization  $p_{ij}\in [\frac{\lambda}{2n}, \frac{\lambda}{n}]$ regardless of the membership of the vertices in $S$ or $S^c$.  In order to achieve non-trivial detection boundary for $\lambda\ll \log{n}$ one might need to work with a different parametrization of graph sparsity which allows for varying graph density between the null and alternative hypotheses. }
\end{remark}

Theorem \ref{thm:sparse} establishes the sharp optimality of the Higher Criticism Test in the sparse signal regime. Another commonly studied test of practical interest is the Maximum Degree Test introduced in Section \ref{section:tests}. Our next theorem characterizes the detection limits of the Maximum Degree Test.

\begin{theorem}\label{thm:max_degree_test}
Suppose $\alpha\in (1/2,1)$, $\theta:=\lim_{n\rightarrow \infty}\frac{\lambda}{2n}$, and let
\be
\cmax(\alpha) = 
16(1-\theta)\left(1-\sqrt{1-\alpha}\right)^2.
\ee

\begin{enumerate}[label=\textbf{\roman*}.]
	\item\label{thm:max_degree_upper}
	Assume $\lambda \gg \log{n}$. Then the Maximum Degree Test is asymptotically powerful if \be
	\tanh(A)\ge \sqrt{\frac{C^*\log n}{\lambda}}, \quad C^*>\cmax(\alpha).
	\ee
	
	\item\label{thm:max_degree_lower}
		Assume $\lambda \gg \log^3{n}$. Then the Maximum Degree Test is asymptotically powerless if 
		\be
		\tanh(A)\le \sqrt{\frac{C^*\log n}{\lambda}}, \quad C^*<\cmax(\alpha).
		\ee
	
\end{enumerate}

\end{theorem}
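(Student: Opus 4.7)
The plan is to quantify the asymptotic behavior of $\max_{1\le i\le n} d_i$ under both hypotheses by exploiting the binomial structure of each individual degree together with the weakness of the pairwise dependence. Under $\Pzero$, $d_i\sim\Bin(n-1,\lambda/(2n))$ with mean $\mu_0:=(n-1)\lambda/(2n)$ and variance $\sigma_0^2:=(n-1)(\lambda/(2n))(1-\lambda/(2n))=\tfrac12\lambda(1-\theta)(1+o(1))$. For a signal vertex $i\in S(\bbeta)$ under $\Pbeta$, $d_i$ is a sum of $n-1$ independent Bernoullis whose mean is shifted by $(\lambda/4)\tanh(A)(1+o(1))$ relative to $\mu_0$, while for a null vertex $j\in S(\bbeta)^c$ the shift is only $O(s\lambda\tanh(A)/n)=o(\sigma_0\sqrt{\log n})$ under the scaling of interest. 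Cram\'er-type asymptotics for binomial tails will supply sharp constants, with relative Gaussian error controlled by $t^3/\sigma_0^4$ along the relevant quantile window $t\asymp\sigma_0\sqrt{\log n}$.

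For part \ref{thm:max_degree_upper}, I would set the threshold $\tau_n:=\mu_0+\sqrt{(1+\eta)\lambda(1-\theta)\log n}$ with $\eta>0$ arbitrarily small. A union bound and the Chernoff upper tail give $\Pzero(\max_i d_i\ge\tau_n)\le n\cdot\exp(-(1+\eta)\log n(1+o(1)))\to 0$. Under $\Pbeta$ with $\tanh(A)^2\ge C^*\log n/\lambda$ and $C^*>\cmax(\alpha)$, Cram\'er's lower bound for the binomial tail yields $\Pbeta(d_i\ge\tau_n)\ge n^{-(1-\alpha)(1-\delta)}$ for some $\delta>0$ and every $i\in S(\bbeta)$; the algebra reduces to $\sqrt{C^*}/4+\sqrt{(1-\theta)(1-\alpha)}>\sqrt{(1-\theta)(1+\eta)}$, which holds for $\eta,\delta$ small by the definition of $\cmax(\alpha)=16(1-\theta)(1-\sqrt{1-\alpha})^2$. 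A second moment argument on $N:=\sum_{i\in S(\bbeta)}\mathbf{1}(d_i\ge\tau_n)$ then forces $\Pbeta(N\ge 1)\to 1$: for distinct $i,j\in S(\bbeta)$ the only shared edge is $Y_{ij}$, and conditioning on $Y_{ij}$ renders $d_i-Y_{ij}$ and $d_j-Y_{ij}$ independent, so $\mathrm{Cov}(\mathbf{1}(d_i\ge\tau_n),\mathbf{1}(d_j\ge\tau_n))$ is bounded by a $\lambda/n$-multiple of the product of the marginals.

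For part \ref{thm:max_degree_lower}, the target is to show that the law of $\max_i d_i$ under $\Pbeta$ is asymptotically indistinguishable from that under $\Pzero$, so that every threshold rule based on the maximum degree has risk tending to one. I would split $\max_i d_i=\max\bigl(\max_{j\in S(\bbeta)^c} d_j,\ \max_{i\in S(\bbeta)} d_i\bigr)$ and argue that the first piece dominates. A second-moment method over $j\in S(\bbeta)^c$, using Cram\'er-sharp asymptotics for the binomial lower tail (this is precisely where $\lambda\gg\log^3 n$ enters, through the requirement that $t^3/\sigma_0^4=O((\log n)^{3/2}/\sqrt\lambda)\to 0$ uniformly for $t\asymp\sigma_0\sqrt{\log n}$), gives $\max_{j\in S(\bbeta)^c} d_j\ge \mu_0+(1-\epsilon)\sqrt{\lambda(1-\theta)\log n}$ with probability $\to 1$ for every $\epsilon>0$ under either hypothesis. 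A union bound together with the Chernoff upper tail yields $\max_{i\in S(\bbeta)} d_i\le \mu_0+\bigl(\sqrt{C^*}/4+\sqrt{(1-\theta)(1-\alpha)}+\epsilon\bigr)\sqrt{\lambda\log n}$ with probability $\to 1$ under $\Pbeta$. When $C^*<\cmax(\alpha)$, for $\epsilon$ small the latter bound is strictly less than the former, so the maximum is realised in $S(\bbeta)^c$, whose degrees have the same Cram\'er asymptotics under $\Pbeta$ as under $\Pzero$ (the residual signal contribution to their means is negligible on the scale $\sqrt{\lambda\log n}$). Consequently $\Pbeta(\max_i d_i\ge\tau_n)-\Pzero(\max_i d_i\ge\tau_n)\to 0$ for every deterministic sequence $\tau_n$, which is exactly powerlessness for any threshold-based Maximum Degree Test.

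The principal obstacle is obtaining Cram\'er-sharp binomial tail asymptotics with relative error $o(1)$ uniformly along the quantile window $t\asymp\sigma_0\sqrt{\log n}$; this is what separates the $\lambda\gg\log n$ hypothesis sufficient for the power direction from the stronger $\lambda\gg\log^3 n$ hypothesis demanded by the powerlessness direction, since the former admits a polynomial slack that the second-moment method absorbs, whereas the latter requires the exact constant $\cmax(\alpha)$ and thus insists that the cubic correction in the Petrov expansion vanish. A secondary difficulty is disentangling the $\Theta(n)$ shared edges among the null degrees cleanly enough for the second-moment method on $\max_{j\in S^c} d_j$ to go through; fortunately the relevant off-diagonal covariances scale as $\lambda/n$ times the product of marginals, which is just enough.
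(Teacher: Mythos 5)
Your overall architecture is sound and in fact parallels the paper's (signal vertices alone drive detection above the threshold $\cmax$, and below it the maximum is carried by the null vertices), but both halves have holes. For part \ref{thm:max_degree_upper}, the type I error control is fine, but your second-moment step over $N=\sum_{i\in S}\mathbf{1}(d_i\ge\tau_n)$ is not closed as stated when $\theta=\lim\lambda/2n>0$ (a case the theorem explicitly covers, since $\cmax$ depends on $\theta$). Conditioning on the shared edge gives exactly $\mathrm{Cov}\big(\mathbf{1}(d_i\ge\tau_n),\mathbf{1}(d_j\ge\tau_n)\big)=\mu(1-\mu)(p_1-p_0)^2$ with $\mu=\frac{\lambda}{n}f(2A)$ and $p_y=\P(d_i-Y_{ij}\ge\tau_n-y)$; your bound ``$\lambda/n$ times the product of marginals'' then only yields $\Pbeta(N\ge1)\ge (1+2\theta+o(1))^{-1}$, bounded away from $1$ for dense graphs, which is not enough for risk $\to 0$. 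You must either use that $p_1-p_0$ is a \emph{point} probability, smaller than the tail by a factor $\asymp\lambda^{-1/2}$ (compare parts (a,i) and (a,ii) of Lemma \ref{lemma:binomial_collection}), so the covariance is $o$ of the product of marginals, or sidestep dependence entirely as the paper does: lower bound each $d_i$, $i\in S$, by $\sum_{j\in S^c}Y_{ij}$, which are i.i.d.\ $\Bin(n-s,\frac{\lambda}{n}f(A))$ across $i\in S$, and conclude by independence.

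The more serious gap is in part \ref{thm:max_degree_lower}. Your target, $\Pbeta(\dmax\ge\tau_n)-\Pzero(\dmax\ge\tau_n)\to0$ for every deterministic $\tau_n$, is the right one, but you derive it from ``same Cram\'er asymptotics'' of individual null-vertex degrees plus negligibility of their mean shift ``on the scale $\sqrt{\lambda\log n}$.'' That is the wrong scale: the distribution of the maximum lives in a window of width $\Theta\big(\sqrt{np_nq_n/\log n}\big)=\Theta\big(\sqrt{\lambda/\log n}\big)$, so to merge the two distribution functions at \emph{every} threshold you need (a) the contamination of null-vertex degrees by signal edges to be $o$ of this window (it is: the shift is $\asymp n^{-\alpha}\sqrt{C^*\lambda\log n}$ and $n^{-\alpha}\log n\to0$, but you never make this comparison), and (b) an anti-concentration or limit statement for the maximum of the \emph{dependent} degrees over $S^c$ under the null, so that moving the threshold by $o(\mathrm{window})$, or by $a_n\times\mathrm{window}$ with $a_n\to\infty$ slowly, changes its probability by $o(1)$. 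First-order Cram\'er asymptotics of single degrees cannot supply (b); this is precisely where the paper invokes Bollob\'as' Corollary 3.4 (the limit law for $\dmax$ in $G(n,p)$, valid when $np_nq_n\gg\log^3 n$ --- the same cubic Petrov correction you allude to), uses uniform convergence to the continuous limit, and absorbs $\max_{j\in S^c}\sum_{i\in S}Y_{ij}$ via Bernstein with a slowly diverging $a_n$. Without such an ingredient the step ``the max is realised in $S^c$, whose degrees have the same asymptotics, hence the maxima have merging laws'' does not follow; with it, your route becomes essentially the paper's proof, stated as direct distributional merging rather than the paper's contradiction argument against a consistent threshold sequence.
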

{The results in Theorem \ref{thm:max_degree_test} establish that for $\lambda \gg \log^3{n}$, the Maximum Degree Test is sharp optimal iff $\alpha>3/4$, and looses out to the Higher Criticism Test in the regime of $\alpha \in (1/2,3/4)$. Although this result is parallel to those observed in sparse detection problems for independent Gaussian and Binomial sequence models \citep{Candes,mukherjee2015hypothesis}, the proof of this fact is substantially more involved. This is in particular true about the proof of the lower bound part in Theorem \ref{thm:max_degree_test}\ref{thm:max_degree_lower}. Also note that Theorem \ref{thm:max_degree_test}\ref{thm:max_degree_upper} hold  as soon as $\lambda \gg \log{n}$. However, in order to prove the lower bound part in theorem above we crucially make use of the null distribution of $\max_{i=1}^n d_i$, which is readily available for $\lambda \gg \log^3{n}$ \citep{bollobas}. The situation becomes highly subtle for $\log{n}\ll \lambda \lesssim \log^3{n}$. In particular, we were able to argue that for $\log{n}\ll \lambda \lesssim \log^3{n}$, if one considers the Maximum Degree Test that rejects when $\dmax>np_n+\sqrt{\delta_n np_nq_n\log{n}}$, where $p_n=\lambda/2n$, $q_n=1-p_n$, and $\delta_n$ is some sequence of real numbers, then such tests are asymptotically powerless as soon as $C^*<\cmax(\alpha)$ defined above, if $\limsup \delta_n \neq 2$.  Lowering the requirement of $\lambda \gg \log^3{n}$ to $\lambda \gg \log{n}$ in this case requires a second moment argument along with Paley-Zygmund Inequality.  We refer to Appendix \ref{sec:appendix_maxdegree} for more details. On the other hand, the case when $\limsup \delta_n = 2$ is extremely challenging, and the result of the testing problem depends on the rate of convergence of $\delta_n$ to $2$ along subsequences. We leave this effort to future ventures.}
	

\section{Simulation Results}
\label{section:simulation}

We now present the results of some numerical experiments in order to demonstrate the behavior of the various tests in finite samples. To put ourselves in context of our asymptotic analysis, we chose to work with $n=100$. Since our theoretical results depend both on the signal sparsity and graph sparsity, we divide our simulation results accordingly. In each of the situations, we compare the power of the tests (Total Degree, Maximum degree, and Higher Criticism respectively) by fixing the levels at $5\%$. In particular, we generate the test statistics $100$ times under the null and take the $95\%$-quantile as the cut-off value for rejection. The power against different alternatives are then obtained empirically from $100$ repeats each. 

Our first set of simulations corresponds to the dense regime i.e. $\alpha\leq \frac{1}{2}$.  In figure \ref{fig:dense_signal}, we plot the power of each of the three tests for a range of signal sparsity-strength pairs $(\alpha,r)$, where $\alpha \in (0,\frac{1}{2})$ with increments of size $0.025$ and the signal strength is given by $A=\frac{n^{-r}}{\sqrt{\lambda}}$ with $r\in (0,\frac{1}{2})$ with increments of size $0.025$, and $\lambda=25$. In addition, we plot the theoretical detection boundary given by $r=\frac{1}{2}-\alpha$ in red. As dictated by our theoretical results, the phase transitions are clear in the simulations as well. In particular, we observe that the Total Degree Test performs better in the dense regime. The Higher Criticism Test seems to have some power, the Maximum Degree Test fails have any power in this regime of sparsity.

Our second set of simulations corresponds to the sparse regime i.e. $\alpha> \frac{1}{2}$.  In this case, following the theoretical predictions, we divide our simulations based on the graph sparsity $\lambda$ as well. In figure \ref{fig:sparse_signal}, we plot the power of each of the three tests for a range of signal sparsity-strength pairs $(\alpha,r)$ for three different values of $\lambda$, namely $\lambda=2,10,$ or $25$ respectively.  Specifically we choose for signal sparsity index varying between $\alpha \in (\frac{1}{2},1)$  with increments of size $0.025$, the signal strength to be $A=\sqrt{\frac{C\log{n}}{\lambda}}$ with $C\in (0,16)$ with increments of size $0.5$. In addition, we plot the theoretical detection boundary given by $r=\csparse(\alpha):=16(1-\theta)\{(\alpha-1/2)\I(\alpha<3/4)+(1-\sqrt{1-\alpha})^2\I(\alpha \geq 3/4)\}$ with $\theta=\lambda/2n$, in red. To distinguish between the performance of the Higher Criticism Test and the Maximum Degree Test, we also plot the theoretical detection boundary of the Maximum Degree Test in cyan.  The simulation results seem to match the theoretical predictions. In particular, when is $\lambda<\log{n}=3$, none of the tests seems to have any power in detecting any signal presented. In contrast for $\lambda$ much larger than $\log{n}$, the empirical performance of the Higher Criticism Test and Maximum degree Test also follows the theoretical guarantees. 

\begin{figure}
\begin{center}
\includegraphics[width=13 cm,keepaspectratio]{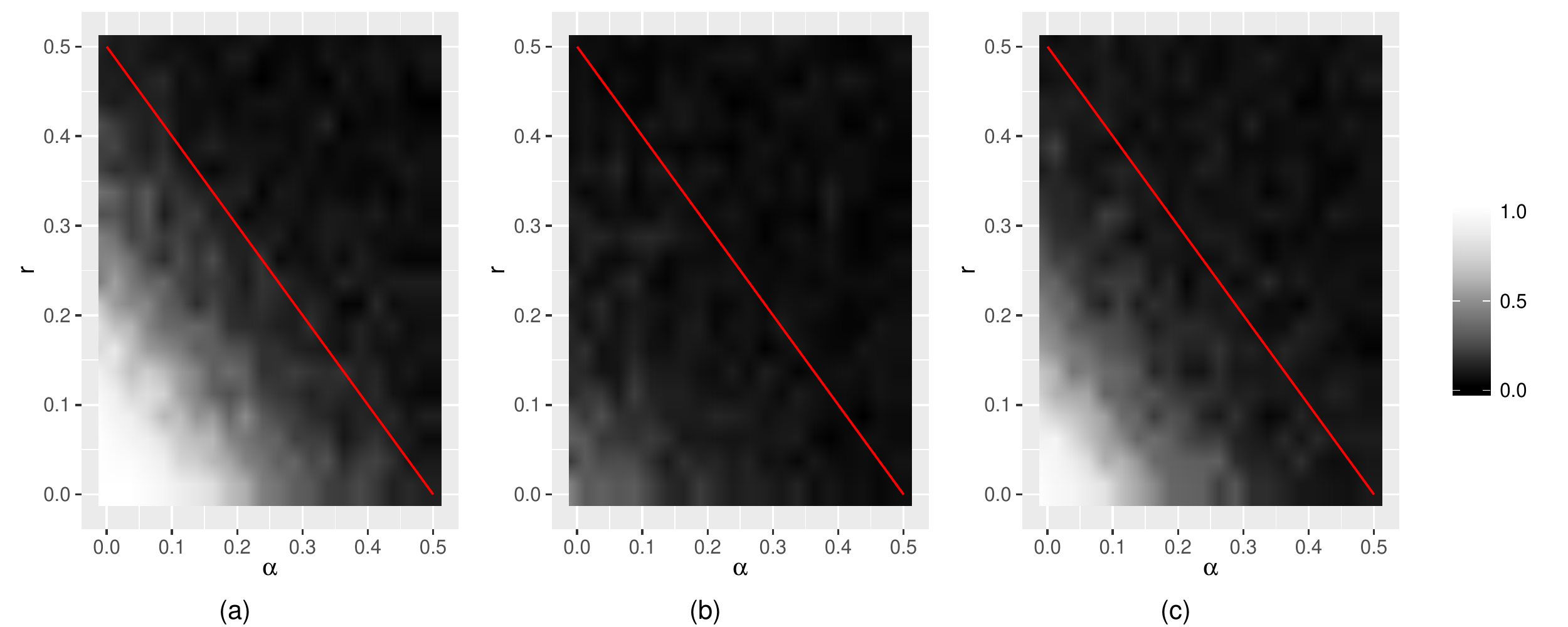}
\caption{The power of testing procedures in the dense signal setup. (a) shows the power of the Total degree test, (b) plots the max-degree while (c) plots the power of the GHC statistic \label{fig:dense_signal}. The theoretical detection threshold is drawn in red.}
\end{center}
\end{figure}

\begin{figure}
\begin{center}
\includegraphics[width=15 cm,keepaspectratio]{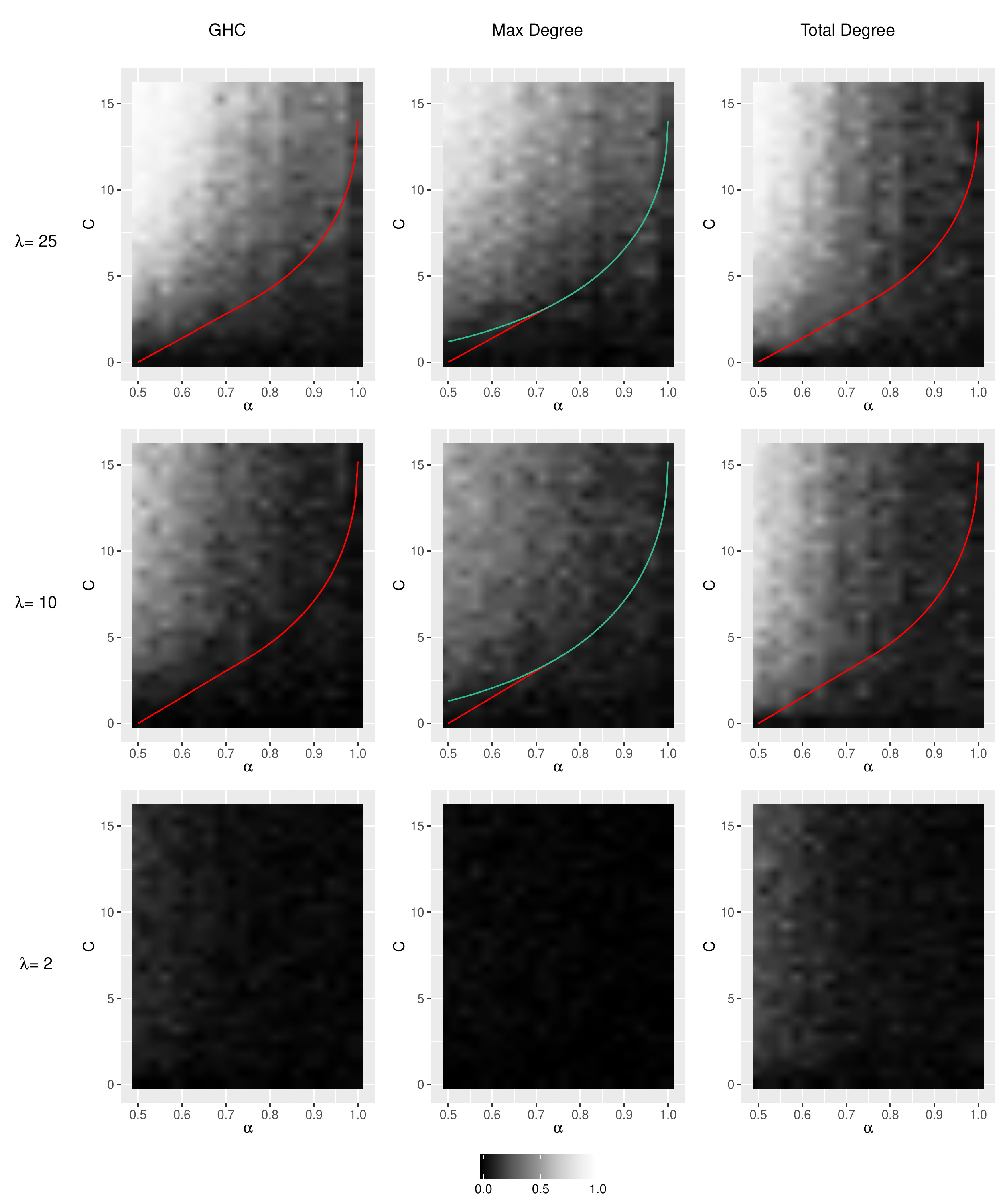}
\end{center}
\caption{The power of the testing procedures in the sparse signal setup. The theoretical detection thresholds are drawn in red, while the thresholds for the Maximum Degree Test are drawn in cyan.\label{fig:sparse_signal}}
\end{figure}

\section{Discussion}
\label{section:discussion}
In this paper we formulate and study the problem of detecting the presence of differentially attractive nodes in networks through a version of the $\beta$-model on sparse graphs. Our mathematical results are sharp in all regimes of signal strength as well as graph and signal sparsity. The model can be further generalized, where instead of using an $\mathrm{expit}$ function, the connection probabilities between node $i$ and $j$ is given by $\frac{\lambda}{n}\psi(\beta_i+\beta_j)$ for some function $\psi$ which is the distribution function of a symmetric random variable, i.e. $\psi(x)+\psi(-x)=1$, and satisfies some reasonable smoothness type regularity conditions. This in particular, will include the probit link  $\psi(x)=\int\limits_{-\infty}^xe^{-t^2/2}/\sqrt{2\pi}dt$. Thereafter, similar to \cite{mukherjee2015hypothesis}, we believe that one can obtain parallel results here as well, with the sharp constants in the problem changing according to the function $\psi$. 

{All of our results assume the knowledge of the graph sparsity parameter $\lambda$. For unknown $\lambda$, one needs to include supremum over $\lambda$  in the definition of risk in \eqref{eq:risk}. Following intuitions from \cite{arias2013community}, we believe that the nature of detection thresholds for unknown $\lambda$ remains the same for $\alpha>\frac{1}{2}$.  However it might change for the dense signal regime $\alpha\leq \frac{1}{2}$, where it might be more prudent to use a degree variance type test instead of the total degree test. For the sparse regime $(\alpha>\frac{1}{2})$, we intuitively believe that as a first step, one can choose a vertex at random and estimate $\lambda$ by the proportion of its neighbors. Since $s \ll \sqrt{n}$ in this regime of signal sparsity, a randomly chosen vertex belongs to the set of vertices with corresponding $\beta_i=0$ with high probability, and as a result it is not too hard to show that the resulting estimate is consistent in ratio scale. One can then delete this randomly chosen vertex from the network and consider the detection problem on the remaining $n-1$ vertices, and use the estimated $\lambda$ as a plug-in. It indeed remains a question to study how the performance of the corresponding testing procedures change under this plug-in principle. Although the analysis of the Maximum Degree Test should not be too difficult to carry out using techniques similar to those carried out here, the analysis of the Higher Criticism Test is extremely subtle and is beyond the scope of the current paper. }

Although we only study the sparse detection problem in this paper, the rich phase transitions observed hint at similar complexities in further inferential questions. Of particular interest is the subset selection problem, which corresponds to the identification of vertices of differential attractiveness. We plan to address sharp analyses of such inferential questions in future papers.

\section{Proofs of main results}
In this section, we prove the main results in this paper. 
\subsection{Some properties of Binomial distribution}

In this section, we collect some properties of the binomial distribution which will be used throughout this paper. The first lemma is a simple change of measure argument and the proof is omitted.

\begin{lemma}\label{lemma:binomial_changeof_measure}
If $X\sim \Bin(n,p)$ then for any $a>0$ and Borel subset of $B$ of $\mathbb{R}$
\be 
\E\left(a^X\one_{X\in B}\right)=(ap+(1-p))^n \P\left(\Bin\left(n,\frac{ap}{ap+(1-p)}\right)\in B\right).
\ee
\end{lemma}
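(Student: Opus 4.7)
The plan is a direct exponential-tilting (change-of-measure) computation at the level of the probability mass function. Since $X$ is supported on $\{0,1,\ldots,n\}$, I would first expand the left-hand side as a finite sum:
\[
\E(a^X \one_{X\in B}) \;=\; \sum_{k=0}^n a^k \one_{k\in B}\binom{n}{k} p^k (1-p)^{n-k} \;=\; \sum_{k=0}^n \binom{n}{k}(ap)^k (1-p)^{n-k}\,\one_{k\in B}.
\]
Noting that $a>0$ and $p\in[0,1]$ guarantee $ap+(1-p)>0$, I would then multiply and divide by $(ap+(1-p))^n$ and distribute this power across the two factors inside the sum, rewriting the expression as
\[
(ap+(1-p))^n \sum_{k=0}^n \binom{n}{k}\left(\frac{ap}{ap+(1-p)}\right)^{k}\left(\frac{1-p}{ap+(1-p)}\right)^{n-k}\one_{k\in B}.
\]

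Setting $p' := ap/(ap+(1-p))$, one checks immediately that $p' \in [0,1]$ and that $1-p' = (1-p)/(ap+(1-p))$, so the summand is precisely the probability mass function of a $\Bin(n,p')$ random variable evaluated at $k$, restricted to $B$. Hence the sum equals $\P(\Bin(n,p')\in B)$, yielding the claim. There is no substantive obstacle: the only thing to check is that the tilted parameter $p'$ lies in $[0,1]$, which is immediate. This is why the paper declares the argument a ``simple change of measure'' and omits the proof.
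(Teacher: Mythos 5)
Your computation is correct: expanding $\E(a^X\one_{X\in B})$ over the binomial pmf, normalizing by $(ap+(1-p))^n$, and recognizing the tilted $\Bin\left(n,\frac{ap}{ap+(1-p)}\right)$ pmf is exactly the ``simple change of measure'' argument the paper invokes when omitting the proof. Nothing is missing, and your approach coincides with the intended one.
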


The next lemma is crucial and the proof is deferred to Appendix \ref{sec:proof_of_binomial_lemma}.  

 \begin{lemma}\label{lemma:binomial_collection} 
\begin{enumerate}

\item [(a)]Suppose $X_n\sim \Bin(n,p_n)$ such that $n\min(p_n,1-p_n)\gg \log n$, and  $\{C_n\}_{n\ge 1}$ is a sequence of reals converging to $C>0$. Then we have 

\item[(i)]
\be 
 \P\left(X_n= np_n+C_n\sqrt{np_n(1-p_n)\log n}\right)&=\frac{1}{\sqrt{np_n}}n^{-\frac{C^2}{2}+o(1)},
\ee
and
\item[(ii)]

$$\lim_{n\to \infty} \P\left(X_n\ge np_n+C_n\sqrt{np_n(1-p_n)\log n}\right)= n^{-\frac{C^2}{2}+o(1)}.$$

\item[(b)]
Suppose further that $Y_n\sim \Bin(b_n,p_n')$ is independent of $X_n$, and
\be
\limsup_{n\rightarrow\infty}p_n<1,\,\,\,\,\,\,
\limsup_{n\rightarrow\infty}\frac{b_n}{\sqrt{n}}<\infty,\,\,\,\,\,\,
\lim_{n\rightarrow\infty}\frac{p_n'}{p_n}=1.
\ee
Then we have
\item[(i)]
\begin{align*}
\P(X_n+Y_n=np_n+C_n\sqrt{n p_n(1-p_n)\log n})= \frac{1}{\sqrt{np_n}} n^{-\frac{C^2}{2}+o(1)},
\end{align*}
and 
\item[(ii)]
\be
 \P(X_n + Y_n \geq np_n+C_n\sqrt{n p_n(1-p_n)\log n}) = n^{-\frac{C^2}{2}+o(1)}. 
\ee

\end{enumerate}
\end{lemma}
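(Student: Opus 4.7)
The plan is to prove parts (a) and (b) separately, with (b) reducing to (a) via conditioning on $Y_n$. \textbf{For part (a)(i)}, I would apply Stirling's formula directly to $\P(X_n=k) = \binom{n}{k} p_n^k (1-p_n)^{n-k}$, yielding
\[
\log \P(X_n=k) = -\tfrac{1}{2}\log\bigl(2\pi np_n(1-p_n)\bigr) - n H\!\bigl(k/n \,\|\, p_n\bigr) + o(1),
\]
where $H(q\|p)=q\log(q/p)+(1-q)\log((1-q)/(1-p))$ is the Bernoulli KL divergence. For $k=np_n+C_n\sqrt{np_n(1-p_n)\log n}$, write $\delta=k/n - p_n$ and Taylor expand $H(\cdot\|p_n)$ around $p_n$ to get $nH = n\delta^2/(2p_n(1-p_n)) + O\bigl(n\delta^3/(p_n(1-p_n))^2\bigr)$; the leading term equals $C_n^2\log n/2$, and the cubic remainder is $O((\log n)^{3/2}/\sqrt{np_n(1-p_n)}) = o(1)$ thanks to the hypothesis $n\min(p_n,1-p_n)\gg \log n$. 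The prefactor $1-p_n$ is absorbed into $n^{o(1)}$, yielding (i). \textbf{For part (a)(ii)}, I would use the consecutive-ratio identity $\P(X_n=k+1)/\P(X_n=k) = (n-k)p_n/((k+1)(1-p_n))$; at the relevant $k$ this ratio equals $1 - C\sqrt{\log n/(np_n(1-p_n))}\,(1+o(1))$, which is bounded away from $1$ but tends to $1$. Summing the resulting geometric-type series gives $\P(X_n\ge k) \sim \P(X_n=k)\cdot C^{-1}\sqrt{np_n(1-p_n)/\log n}$, and the polynomial prefactor is absorbed into $n^{o(1)}$.

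\textbf{For part (b)}, the strategy is to decompose
\[
\P(X_n+Y_n=k) = \sum_{j\ge 0}\P(Y_n=j)\,\P(X_n=k-j)
\]
and show that the inner factor is, to leading order, independent of $j$ over the bulk of the $Y_n$-distribution. Under the hypotheses, $\E[Y_n] = b_n p_n' = O(\sqrt{n}\,p_n)$ and $\operatorname{Var}(Y_n)^{1/2} = O(n^{1/4}\sqrt{p_n})$; since $\limsup p_n<1$ forces $1-p_n$ to be bounded below and $\log n\to\infty$, both quantities are $o\bigl(\sqrt{np_n(1-p_n)\log n}\bigr)$, with the two ratios reducing to $O(\sqrt{p_n/\log n})$ and $O(n^{-1/4}/\sqrt{\log n})$ respectively. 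Hence for $j$ in a Chebyshev-typical window of $Y_n$, the shifted target $k-j$ corresponds to some $\tilde C_n = C_n + o(1)$ in the formulation of (a)(i), and that lemma delivers $\P(X_n=k-j) = (1+o(1))\P(X_n=k)$ \emph{uniformly} over this window. Atypical values of $j$ are handled by a routine Chernoff bound on $Y_n$ combined with the crude bound $\P(X_n=k-j)\le 1$. Since $\sum_j\P(Y_n=j)=1$, summing gives (b)(i); the same device with (a)(ii) in place of (a)(i) gives (b)(ii).

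\textbf{The main obstacle} is the uniform comparison $\P(X_n=k-j)=(1+o(1))\P(X_n=k)$ across the effective range of $j$. The estimate is tightest precisely when $p_n$ is near its smallest admissible value $\log n / n$, because then the moderate-deviation scale $\sqrt{np_n(1-p_n)\log n}$ is only marginally larger than the typical magnitude of $j$; the hypotheses $\limsup p_n<1$ and $b_n=O(\sqrt n)$ are exactly what rescue the argument on this scale. Keeping the $n^{o(1)}$ error budget uniform across the sum over $j$, rather than losing a factor of $b_n$ by naive union-bounding, is the technical point that most requires care, and it is also the reason the hypothesis $p_n'/p_n\to 1$ appears — it ensures that the bulk of $Y_n$'s mass lies in a window where the moderate-deviation Taylor expansion of (a)(i) can be invoked in a stable fashion.
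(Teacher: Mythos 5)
Your overall route for part (b) --- condition on $Y_n$, argue that its contribution lives on a scale negligible compared with $\sqrt{np_n(1-p_n)\log n}$ (using $b_n=O(\sqrt n)$, $p_n'/p_n\to 1$, $\limsup p_n<1$), and feed the shifted target back into part (a) --- is the same decomposition the paper uses; the only structural difference in part (a) is that you re-derive it via Stirling and ratio-summation while the paper cites Bollob\'as, and that derivation is fine up to a harmless overstatement (under $np_n(1-p_n)\gg\log n$ the cubic remainder is only $o(\log n)$, not $o(1)$, which still suffices for an $n^{o(1)}$ statement). However, the execution of the central step in (b) has two genuine problems. First, part (a)(i) is an $n^{o(1)}$-precision statement, so it cannot ``deliver $\P(X_n=k-j)=(1+o(1))\P(X_n=k)$ uniformly'' over any window; what you can get, and all you need, is a two-sided $n^{o(1)}$ comparison, but even that requires an extra device to pass from the endpoints of the window to all intermediate $j$. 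The paper gets this from unimodality of the binomial pmf: since $k-j$ stays above the mode for every $j$ in the window, $\max_j\P(X_n=k-j)$ and $\min_j\P(X_n=k-j)$ are attained at the two ends, and part (a) is invoked only there (with constant $C-\delta$, letting $\delta\downarrow 0$ at the very end). Without unimodality or an equivalent monotonicity argument, your uniformity claim is unproved.

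Second, and more seriously, your truncation window is miscalibrated. If the window is ``Chebyshev-typical'' (mean plus a bounded number of standard deviations of $Y_n$), then the probability that $Y_n$ falls outside it is only smaller than a constant --- a Chernoff bound at a few standard deviations gives nothing better --- whereas the quantity being estimated is polynomially small, of order $n^{-C^2/2}/\sqrt{np_n(1-p_n)}$. Combined with the crude bound $\P(X_n=k-j)\le 1$, the atypical region then contributes an error that swamps the main term, so (b)(i)--(ii) do not follow as written. The cure is to truncate much higher: the paper takes the cutoff $t_n'=\delta\sqrt{np_n(1-p_n)\log n}$, checks that $t_n'\gg b_np_n'$ and $t_n'\gg\log n$ (this is exactly where the hypotheses on $b_n$, $p_n'$ and $\limsup p_n<1$ enter), so that Bernstein's inequality gives $\P(Y_n>t_n')\le e^{-ct_n'}$, superpolynomially small, while a shift of at most $t_n'$ only changes the constant from $C$ to $C-\delta$, recovered in the limit $\delta\to 0$. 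Any cutoff of the form $\E Y_n+K\log n$ with $K$ large enough (depending on $C$) would also work. As it stands, the ``routine Chernoff bound on the atypical values'' step fails at precisely the point your own discussion identifies as the main obstacle.
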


\subsection{Proof of Theorem \ref{thm:dense}}
We prove here a slightly stronger result which dictates that all tests are asymptotically powerless when $\lim_{n\rightarrow\infty}s\tanh(A)\sqrt{\frac{\lambda}{n}}=0$ while the Total Degree test is asymptotically powerful whenever there exists a diverging sequence $t_n \rightarrow \infty$ such that $s\tanh(A)\sqrt{\frac{\lambda}{n}}\gg t_n$.
We begin by stating the following elementary lemma.
\begin{lemma}\label{lemma_totdeg_var}
For any $\bbeta\in \mathbb{R}_+^n,\lambda>0$ we have 
\begin{align*}
\label{eq:var}Var_{\bbeta,\lambda}(\sum_{i=1}^nd_i)\le &2n\lambda.
\end{align*}
\end{lemma}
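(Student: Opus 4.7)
The plan is to exploit the fact that the total degree double-counts edges, reducing the variance computation to a sum over independent Bernoulli variables. Specifically, I would first rewrite
\[
\sum_{i=1}^n d_i \;=\; \sum_{i=1}^n \sum_{j\neq i} Y_{ij} \;=\; 2\sum_{1\le i<j\le n} Y_{ij},
\]
since $Y_{ij}=Y_{ji}$ and the diagonal entries vanish. Under $\P_{\bbeta,\lambda}$, the variables $\{Y_{ij}\}_{i<j}$ are mutually independent by the model definition \eqref{eq:model_main}, with $Y_{ij}\sim \Bin(1,p_{ij})$.

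Next I would apply independence to obtain
\[
\mathrm{Var}_{\bbeta,\lambda}\!\Big(\sum_{i=1}^n d_i\Big) \;=\; 4\sum_{i<j} p_{ij}(1-p_{ij}) \;\le\; 4\sum_{i<j} p_{ij}.
\]
The key quantitative input is the trivial upper bound $p_{ij} \le \lambda/n$, which is immediate from the formula $p_{ij}=\frac{\lambda}{n}\cdot\frac{e^{\beta_i+\beta_j}}{1+e^{\beta_i+\beta_j}}$ since the expit factor is at most $1$. Plugging this in and using $\binom{n}{2}\le n^2/2$ gives
\[
\mathrm{Var}_{\bbeta,\lambda}\!\Big(\sum_{i=1}^n d_i\Big) \;\le\; 4 \binom{n}{2}\cdot \frac{\lambda}{n} \;=\; 2(n-1)\lambda \;\le\; 2n\lambda,
\]
which is the claimed inequality.

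There is really no obstacle here; the statement is a one-line computation once one notices the $2\sum_{i<j}Y_{ij}$ representation and the pointwise bound $p_{ij}\le \lambda/n$. The fact that $\bbeta \in \mathbb{R}_+^n$ plays no role beyond ensuring $p_{ij}$ is well-defined, and the bound in fact holds for arbitrary real $\bbeta$. The reason the lemma is stated as an inequality (rather than an equality) is precisely to sidestep tracking the exact value of $\sum_{i<j} p_{ij}(1-p_{ij})$, which would depend on $\bbeta$; for the Total Degree Test calibration used in Theorem \ref{thm:dense}\ref{thm:dense_upper}, this crude bound of order $n\lambda$ is already sharp enough.
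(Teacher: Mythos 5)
Your proof is correct. The route is mildly different from the paper's: you write $\sum_i d_i = 2\sum_{i<j}Y_{ij}$ and compute the variance directly over the independent edge variables, getting $4\sum_{i<j}p_{ij}(1-p_{ij}) \le 2(n-1)\lambda \le 2n\lambda$, whereas the paper decomposes the variance of the total degree into the degree variances and pairwise covariances, bounding $Var_{\bbeta,\lambda}(d_i)\le \lambda$ and $Cov_{\bbeta,\lambda}(d_i,d_j)=Var_{\bbeta,\lambda}(Y_{ij})\le \lambda/n$ before summing. Both arguments rest on exactly the same two ingredients (mutual independence of the $Y_{ij}$ and the pointwise bound $\E_{\bbeta,\lambda}Y_{ij}\le \lambda/n$), so the difference is one of bookkeeping: your edge-sum representation collapses the computation to a single line and even yields the slightly sharper constant $2(n-1)\lambda$, while the paper's degree/covariance decomposition keeps the quantities ($Var(d_i)$, $Cov(d_i,d_j)$) that recur elsewhere in its analysis of degree-based statistics. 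Your side remarks — that the nonnegativity of $\bbeta$ is not needed and that the crude bound suffices for the Chebyshev calibration in Theorem \ref{thm:dense}\ref{thm:dense_upper} — are also accurate.
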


\begin{proof}
The above bound follows on noting that
\\

$
Var_{\bbeta,\lambda}(d_i)\le \sum_{j\ne i} \E_{\bbeta,\lambda}Y_{ij}\le \lambda,\quad 
Cov_{\bbeta,\lambda}(d_i,d_j)=Var_{\bbeta,\lambda}(Y_{ij})\le \E_{\bbeta,\lambda} Y_{ij}\le \frac{\lambda}{n}.
$
\end{proof}

\begin{proof}[Proof of Theorem \ref{thm:dense}. \ref{thm:dense_upper}]
Let $\phi_n$ be the test defined by
\be
\phi_n=&1\text{ if }\sum_{i=1}^nd_i-\frac{(n-1)\lambda}{2}>K_n\\
=&0\text{ otherwise},
\ee
where $K_n:=\frac{\lambda s}{8}\tanh\Big(\frac{A}{2}\Big)$. We define the sub parameter space
\be
\widetilde{\Xi}(s,A):=\{\beta\in \mathbb{R}_+^n: |S(\beta)|=s, \beta_i= A, i\in S(\beta)\}. \label{eq:sub}
\ee
Since the distribution of $\sumin d_i$ is stochastically increasing in $A$, it suffices to prove that 
\be\label{eq:dense1}
\limsup_{n\rightarrow\infty}\P_{\bbeta=\mathbf{0},\lambda}(\sum_{i=1}^nd_i-\frac{(n-1)\lambda}{2}>K_n)=0,\\
\label{eq:dense2}\limsup_{n\rightarrow\infty}\sup_{\bbeta\in \widetilde{\Xi}(s,A)} \P_{\bbeta,\lambda}(\sum_{i=1}^nd_i- \frac{(n-1)\lambda}{2}>K_n)=1.
\ee
For proving \eqref{eq:dense1}, an application of Chebyshev's inequality along with Lemma \ref{lemma_totdeg_var} gives
\be
\P_{\bbeta=\mathbf{0},\lambda}(\sum_{i=1}^nd_i-\frac{(n-1)\lambda}{2}>K_n)=&\P_{\bbeta=\mathbf{0},\lambda}(\sum_{i=1}^nd_i-\sum_{i=1}^n\E_{\bbeta=0,\lambda}d_i>K_n)
\le \frac{2n\lambda}{K_n^2},
\ee
which goes to $0$ using \eqref{eq:upper_dense} by choice of $K_n$, thus proving \eqref{eq:dense1}. Turning to prove \eqref{eq:dense2}, for any $\bbeta\in \widetilde{\Xi}(s,A)$  we have
\be
\ &\sum_{i=1}^n\E_{\bbeta,\lambda}d_i-\frac{(n-1)\lambda}{2}\\=&\frac{\lambda}{n}s(s-1)\Big(\frac{e^{2A}}{1+e^{2A}}-\frac{1}{2}\Big)+\frac{\lambda}{n}2s(n-s)\Big(\frac{e^{A}}{1+e^{A}}-\frac{1}{2}\Big)\\
=&\frac{\lambda}{2n}s(s-1)\tanh(A)+\frac{\lambda}{n}s(n-s)\tanh\Big(\frac{A}{2}\Big)\\
\ge &\frac{\lambda}{2n}\tanh\Big(\frac{A}{2}\Big)\Big[s(s-1)+s(n-s)\Big]\\ 
=&\frac{\lambda s(n-1)}{2n}\tanh\Big(\frac{A}{2}\Big)
\ge \frac{\lambda s}{4}\tanh\Big(\frac{A}{2}\Big).
\ee
This immediately gives
\be
\ & \P_{\bbeta,\lambda}\Big(\sum_{i=1}^n d_i-\frac{(n-1)\lambda}{2}\le K_n\Big)
\\&=\P_{\bbeta,\lambda}\left(\sum_{i=1}^n(d_i-\E_{\bbeta,\lambda}d_i)\le-\sum_{i=1}^n\E_{\bbeta,\lambda}d_i+\frac{(n-1)\lambda}{2}+K_n\right)\\
&\le \P_{\bbeta,\lambda}\left(\sum_{i=1}^n (d_i-\E_{\bbeta,\lambda}d_i)\le-\frac{\lambda s}{4}\tanh\Big(\frac{A}{2}\Big)+K_n\right)\\
&=\P_{\bbeta,\lambda}(\sum_{i=1}^n (d_i-\E_{\bbeta,\lambda}d_i)\le-K_n)
\le \frac{2n\lambda}{K_n^2},
\ee
where the last step uses  Chebyshev's inequality Lemma \ref{lemma_totdeg_var}. This converges to $0$ as before, thus verifying \eqref{eq:dense2}.
\end{proof}

\begin{proof}[Proof of Theorem \ref{thm:dense}. \ref{thm:dense_lower}]
Recall the sub-parameter space $\widetilde{\Xi}(s,A_n)$ from \eqref{eq:sub}. 
Let $\pi(d\bbeta)$ be a prior on $\widetilde{\Xi}(s,A_n)$ which puts mass $\frac{1}{{n\choose s}}$ on each of the configurations in $\widetilde{\Xi}(s,A)$, and let $\Q_{\pi}(.):=\int \P_{\bbeta,\lambda}(.)\pi(d\bbeta)$ denote the marginal distribution of ${\bf Y}$ where $${\bf Y}|\bbeta\sim \P_{\bbeta,\lambda},\quad \bbeta\sim \pi.$$
Then, setting $$L_\pi({\bf Y}):=\frac{\Q_{\pi}({\bf Y})}{\P_{\bbeta=0,\lambda}({\bf Y})}$$ denote the likelihood ratio, it suffices to show that \citep{Ingster4}
\begin{align}
\lim_{n\rightarrow\infty}\E_{\bbeta=\mathbf{0},\lambda}L_\pi^2=1.
\end{align}
To this effect, by a direct calculation we have
\begin{align}\label{eq:dense3}
\E_{\bbeta=\mathbf{0},\lambda}L_\pi^2
=&\frac{1}{{n\choose s}^2}\sum_{S_1,S_2\subset [n]:|S_1|=|S_2|=s}\prod_{1\le i<j\le n}T_{S_1,S_2}^{ij}(A),
\end{align}
where setting $f(A):=\frac{e^A}{1+e^A}$ we define $T_{S_1,S_2}^{ij}(A)$ via the following case by case definition:
\begin{itemize}
\item
{ $i,j\in S_1\cap S_2$}

In this case, set 
\begin{align*}
T_{S_1,S_2}^{ij}(A):=&\frac{2\lambda f(2A)^2}{n}+\frac{\Big(1-\frac{\lambda f(2A)}{n}\Big)^2}{1-\frac{\lambda}{2n}}
\end{align*}
There are ${Z\choose 2}$ such terms, where $Z=|S_1\cap S_2|$.
\\

\item
{ $i\in S_1\cap S_2, j\in (S_1-S_2)\cup (S_2-S_1)$  or vice versa}

In this case, set
\begin{align*}
T_{S_1,S_2}^{ij}(A):=&\frac{2\lambda f(2A)f(A)}{n}+\frac{\Big(1-\frac{\lambda f(2A)}{n}\Big)\Big(1-\frac{\lambda f(A)}{n}\Big)}{1-\frac{\lambda}{2n}}
\end{align*}
There are $2Z(s-Z)$ such terms.
\\

\item
{$i\in S_1-S_2,j\in S_2-S_1$ or vice versa}

In this case, set
\begin{align*}
T_{S_1,S_2}^{ij}(A):=&\frac{2\lambda f(A)^2}{n}+\frac{(1-\frac{\lambda f(A)}{n})^2}{1-\frac{\lambda}{2n}}
\end{align*}
There are $(s-Z)^2$ such terms.
\\

\item
{$i\in S_1\cap S_2,j\in (S_1\cup S_2)^c$ or vice versa}

In this case, set
\begin{align*}
T_{S_1,S_2}^{ij}(A):=&\frac{2\lambda f(A)^2}{n}+\frac{(1-\frac{\lambda f(A)}{n})^2}{1-\frac{\lambda}{2n}}
\end{align*}
There are $Z(n-2s+Z)$ such terms.
\\

\item

For all other cases, set $T_{S_1,S_2}^{ij}(A):=1$.
\end{itemize}

Since $\alpha\ge \frac{1}{2}$, \eqref{eq:lower_dense} 
 implies 
\be
\lim_{n\rightarrow\infty}A\sqrt{\lambda}=0.\label{eq:dense4}
\ee
Using the fact that $f(0)=\frac{1}{2}, f'(0)=\frac{1}{4}$ and all derivatives of $f$ are uniformly bounded, a two term Taylor expansion around $A=0$ gives  the uniform bound $$T_{S_1,S_2}^{ij}(A)\le 1+CA^2\frac{\lambda}{n},\quad \forall A>0,$$
where $C$ is a finite positive constant. Thus, setting
$$N(z):=\{(S_1,S_2)\subset [n]:|S_1|=|S_2|=s,|S_1\cap S_2|=z\}$$
using \eqref{eq:dense3} we have
\begin{align*}
\ & \E_{\bbeta=\mathbf{0},\lambda}L_\pi^2\\ &\le  \frac{1}{{n\choose 2}^2}\sum_{z=0}^s\sum_{(S_1,S_2)\in N(z)}\Big(1+CA^2\frac{\lambda}{n}\Big)^{{z\choose 2}+2z(s-z)+(s-z)^2+z(n-2s+z)}|N(z)|\\
\le &  \frac{1}{{n\choose 2}^2}\sum_{z=0}^s\sum_{(S_1,S_2)\in N(z)} \text{ exp }\Big\{CA^2\frac{\lambda}{n}\Big(\frac{z^2}{2}+s^2+z(n-2s)\Big)\Big\}|N(z)|\\
\le & e^{\frac{3CA^2s^2\lambda}{2n}}\E_Z e^{CA^2\lambda \frac{n-2s}{n}Z},
\end{align*}
where $Z$ is a Hypergeometric distribution with parameters $(n,s,s)$ and $\E_Z$ refers to the expectation w.r.t $Z$. If $2s\ge n$ then we have $\E_Z e^{CA^2\lambda \frac{n-2s}{n}Z}\le 1$, giving
$$\E_{\bbeta=\mathbf{0},\lambda} L_\pi^2\le e^{\frac{3CA^2s^2\lambda}{2n}},$$
the RHS of which converges to $1$ on using \eqref{eq:lower_dense}.
 Thus assume without loss of generality that $2s< n$, in which case $Z$ is stochastically dominated by a Binomial distribution with parameters $\Big(s,\frac{s}{n-s}\Big)$, which gives
\begin{align*}
\E  e^{CA^2\lambda \frac{n-2s}{n}Z}\le \E_Z e^{CA^2\lambda Z}\le & \Big[1+\frac{s}{n-s}\Big(e^{CA^2\lambda}-1\Big)\Big]^s\\
\le &\text{ exp }\Big\{\frac{2s^2}{n}\Big(e^{CA^2\lambda} -1\Big)\Big\}.
\end{align*}
Combining, we have the bound
$$\E_{\bbeta=0,\lambda}L_\pi^2\le \text{ exp }\Big\{\frac{3CA^2s^2\lambda}{2n}+\frac{2s^2}{n}\Big(e^{CA^2\lambda} -1\Big)\Big\},$$
the RHS of which converges to $1$ on using \eqref{eq:dense4} along with \eqref{eq:lower_dense}.
\end{proof}

\subsection{Proof of Theorem \ref{thm:sparse}}

Throughout the proof we drop the suffix of $\lambda_n,s_n,A_n$, noting that they all depend on $n$.

\begin{proof}[Proof of Theorem \ref{thm:sparse} \ref{thm:sparse_hopeless}]
We proceed exactly as in the proof of the lower bound in Theorem \ref{thm:dense}. We use the uniform prior $\pi$ on $\widetilde{\Xi}(s,A)$. 
Then, setting $L_\pi({\bf Y})$ to denote the likelihood ratio, we shall establish that 
\be
\lim_{n\rightarrow\infty}\E_{\bbeta=\mathbf{0},\lambda}L_\pi^2=1.
\ee
Similar to the proof for Theorem \ref{thm:dense},we express
\be
\E_{\bbeta=\mathbf{0},\lambda}L_\pi^2
=&\frac{1}{{n\choose s}^2}\sum_{S_1,S_2\subset [n]:|S_1|=|S_2|=s}\prod_{1\le i<j\le n}T_{S_1,S_2}^{ij}(A),\label{eq:dense3}
\ee
where $T_{S_1,S_2}^{ij}$ are exactly the same as the earlier definition. As before, we set $f(x) = e^x/(1+e^x)$. For $0\leq \mu \leq 1$, and constants $c_1, c_2 >0$, we consider the function 
\begin{align}
h(x) = 4\mu f(c_1 x) f(c_2 x) + \frac{(1- 2 \mu f(c_1x))(1-2 \mu f(c_2 x))}{1-\mu}. \nonumber 
\end{align}
Direct computation yields
\begin{align}
h'(x) &= \frac{4\mu}{1-\mu} \Big[ c_1 f'(c_1 x) f(c_2 x) + c_2 f(c_1 x) f'(c_2 x) \Big]\\& - \frac{2\mu}{1-\mu} \Big[c_1 f'(c_1 x) + c_2 f'(c_2 x) \Big] \geq 0, \nonumber
\end{align}
since $f(x) \geq 1/2$ for $x \geq 0$. Next, we observe that $f(x) \uparrow 1$ as $x \to \infty$. Thus 
\begin{align}
h(x) \leq 4\mu + \frac{(1- 2\mu)^2}{1-\mu}. \nonumber
\end{align}
The expressions derived for $T_{S_1, S_2}^{ij}(A)$ in the proof of the lower bound in Theorem \ref{thm:dense} imply that each term is exactly of the form $h$, with $\mu= \lambda/2n$ and appropriate $c_1,c_2$. Thus, using the upper bound on $h$, we have, 
\be
\E_{\bbeta=\mathbf{0},\lambda}L_\pi^2 &\leq \E_Z \Big[ \Big(\frac{2\lambda}{n} + \frac{(1- \lambda/n)^2}{1- \lambda/2n} \Big)^{{Z \choose 2} + 2Z (s-Z) + Z(n-2s+Z) + (s-Z)^2}\Big]  \nonumber\\
&\leq \E_Z \Big[ \Big(\frac{2\lambda}{n} + \frac{(1- \lambda/n)^2}{1- \lambda/2n} \Big)^{7s^2/2 + n Z}\Big], \label{eq:upperbound_intermediate}
\ee
where $\E_Z[\cdot]$ denotes expectation with respect to $Z$ and $Z$ has a Hypergeometric distribution with parameters $(n,s,s)$ respectively. Next, we note that 
\begin{align}
\frac{2\lambda}{n} + \frac{(1- \lambda/n)^2}{1- \lambda/2n}  = 1 + \frac{\lambda}{2n} + \frac{(\frac{\lambda}{2n})^2}{1- \frac{\lambda}{2n}} \leq 1 + \frac{\lambda}{n} \nonumber
\end{align}
for $n$ sufficiently large as $\lambda \ll \log n$. Thus plugging the bounds back into \eqref{eq:upperbound_intermediate}, we have, for $n$ sufficiently large, 
\be
\E_{\bbeta=\mathbf{0},\lambda}L_\pi^2  \leq  \Big( 1 + \frac{\lambda}{n} \Big)^{7s^2/2} \E_Z\Big[ \Big(1+ \frac{\lambda}{n} \Big)^{nZ}\Big]. \label{eq:upperbound_intermediate1}
\ee
We first observe that $(1+ \lambda/n)^{7s^2/2} \leq \exp(\frac{7s^2\lambda}{2n} ) \to 1 $ as $n \to \infty$, using $s= n^{1-\alpha}$ for some $\alpha>1/2$ and $\lambda \ll \log n$. To bound the second term in \eqref{eq:upperbound_intermediate1}, we note that $(1+\lambda/n) >1$ and therefore
\be
\E_Z \Big[\Big(1+ \frac{\lambda}{n} \Big)^{nZ} \Big] \leq \E_U \Big[\Big(1+ \frac{\lambda}{n} \Big)^{nU} \Big], \label{eq:upperbound_intermediate2}
\ee
where $U\sim \Bin(s, \frac{s}{n-s})$ and $\E_U$ denotes the expectation with respect to $U$. Finally, we have,
\be
\ &\E_U\Big[ \Big(1+ \frac{\lambda}{n} \Big)^{nU}\Big] \\
&= \Big[ 1  + \frac{s}{n-s} \Big( \Big( 1 + \frac{\lambda}{n}\Big)^n -1 \Big) \Big]^s 
\leq \exp\Big(\frac{s^2}{n-s} \Big( \Big( 1 + \frac{\lambda}{n}\Big)^n -1 \Big)  \Big) \nonumber\\
&\leq \exp\Big( \frac{s^2}{n-s} e^{\lambda} \Big) 
\leq \exp\Big( \frac{s^2}{n-s} e^{c \log n}\Big)
\leq \exp(n^{1-2\alpha + c}) \nonumber 
\ee
for any constant $c>0$, arbitrarily small, and $n$ sufficiently large. Thus for any $\alpha>1/2$, we can choose $c$ sufficiently small such that $\alpha > 1/2 + c$. This implies that $\E_U [(1+\lambda/n)^{nU}] \to 1$ as $n\to \infty$. Using \eqref{eq:upperbound_intermediate2} and plugging this bound back into \eqref{eq:upperbound_intermediate1} gives the desired conclusion. 
\end{proof}

\begin{proof}[Proof of Theorem \ref{thm:sparse} \ref{thm:sparse_hopeful} \ref{thm:sparse_upperbound}]
Recall the version of the higher criticism test introduced in Section \ref{section:tests}. We will reject the null hypothesis if 
$HC > \sqrt{\log n}$. 
By virtue of centering and scaling of individual $HC(t)$ under the null, we have by union bound and Chebyshev's Inequality,
\be 
\ &\Pzero\left(HC\geq \sqrt{\log{n}}\right)\\ &\leq \sum_t \Pzero \left(GHC(t) > \sqrt{\log {n}} \right)\leq \frac{\sqrt{10\log{n}}}{\log{n}}\rightarrow 0 \quad \text{as}\ n\rightarrow \infty.
\ee
This controls the Type I error of this test. It remains to control the Type II error. We will establish as usual that the non-centrality parameter under the alternative beats the null and the alternative variances of the statistic. We consider alternatives as follows. Let $\Pbeta$ be such that $\beta_i=A$ for $i\in S$ and $\beta_i=0$ otherwise, where $A=\sqrt{C^*\frac{\log{n}}{\lambda}}$ with $16(1-\theta)\geq C^*>\csparse(\alpha)$, $\theta=\lim \frac{\lambda}{2n}$, $|S|=s=n^{1-\alpha}$, $\alpha \in (1/2,1)$. The case of higher signals can be handled by standard monotonicity arguments and are therefore omitted. 
The following Lemma studies the behavior of this statistic under this class of alternatives. 
\begin{lemma}\label{lemma:power_hc} Let $t=\lfloor\sqrt{2r\log{n}}\rfloor$ with $r=\min\left\{1,\frac{C^*}{4(1-\theta)}\right\}$. Then
\begin{enumerate} 
\item[(a)] $\Ebeta\left(GHC(t)\right)\gg \sqrt{\log{n}}.$ \label{lemma:power_hc_a}
\item[(b)]  $\Ebeta^2\left(GHC(t)\right)\gg \mathrm{Var}_{\boldsymbol{\beta},\lambda}\left(GHC(t)\right).$ \label{lemma:power_hc_b}
\end{enumerate}
\end{lemma}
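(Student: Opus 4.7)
The plan is to analyze the first two moments of $HC(t)$ under the alternative $\Pbeta$. Starting from
\begin{align*}
\Ebeta[HC(t)] = \sum_{i=1}^{n} \bigl[\Pbeta(D_i > t) - \Pzero(D_i > t)\bigr],
\end{align*}
I split the sum according to whether $i \in S$ or $i \in S^c$. For every vertex, $d_i$ decomposes as a sum of two \emph{independent} binomials corresponding to its edges into $S$ and $S^c$, with success probabilities of the form $(\lambda/n) f(A')$ and $(\lambda/n) f(A'')$, where $f(x) = e^x/(1+e^x)$ and $A', A'' \in \{0, A, 2A\}$ depending on the membership of $i$. Since $A = \sqrt{C^* \log n/\lambda} = o(1)$, the Taylor expansion $f(A) = \tfrac12 + \tfrac14 A + O(A^3)$ translates, after standardization by $\sqrt{(n-1) p_n q_n}$, into a mean shift of $\sqrt{2 r_A \log n}\,(1+o(1))$ for $D_i$ when $i \in S$, where $r_A := C^*/[16(1-\theta)]$; for $i \in S^c$ the mean shift is of order $n^{-\alpha}\sqrt{\log n} = o(1)$ by the choice $\alpha > 1/2$.

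Applying Lemma \ref{lemma:binomial_collection}(b)(ii) to the two-binomial representation yields
\begin{align*}
\Pbeta(D_i > t) = n^{-(\sqrt{r}-\sqrt{r_A})^2 + o(1)} \ \ (i \in S), \quad \Pbeta(D_i > t) = n^{-r + o(1)} \ \ (i \in S^c),
\end{align*}
so $\Ebeta[HC(t)] \asymp s \cdot n^{-(\sqrt{r}-\sqrt{r_A})^2}$ at leading order. For the null variance, the diagonal contributes of order $n \cdot n^{-r} = n^{1-r}$. The off-diagonal terms are subtle because $d_i, d_j$ share the edge $Y_{ij}$: writing $d_i = Y_{ij} + d_i'$ and $d_j = Y_{ij} + d_j'$ with $d_i' \perp d_j'$ (disjoint edge sets), the Bernoulli covariance identity gives
\begin{align*}
\mathrm{Cov}_{\bbeta,\lambda}(\I(d_i > c), \I(d_j > c)) = p_{ij}(1 - p_{ij})\,\Pbeta(d_i' = \lceil c \rceil)\,\Pbeta(d_j' = \lceil c \rceil),
\end{align*}
where $c$ is the degree threshold corresponding to $t$. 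The point-mass estimate Lemma \ref{lemma:binomial_collection}(b)(i) bounds each factor by $n^{-r + o(1)}/\sqrt{np_n}$, so summing over all $\binom{n}{2}$ pairs produces an off-diagonal total of order $n^{1-2r}$, which is $o(n^{1-r})$ and hence negligible. The alternative variance is controlled identically, with signal-vertex contributions to the diagonal being at most lower-order corrections since $s \ll n$.

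Combining, $\Ebeta[GHC(t)] \asymp n^{1/2 - \alpha + r/2 - (\sqrt{r}-\sqrt{r_A})^2}$. The choice $r = \min\{1, 4 r_A\}$ is precisely the maximizer of this exponent: if $4r_A \leq 1$ then $r = 4r_A$ and the exponent reduces to $\tfrac12 - \alpha + r_A$, strictly positive iff $C^* > 16(1-\theta)(\alpha - \tfrac12)$, matching $\csparse(\alpha)$ for $\alpha \in (\tfrac12, \tfrac34)$; if $4 r_A > 1$ then $r = 1$ and the exponent becomes $1 - \alpha - (1 - \sqrt{r_A})^2$, strictly positive iff $C^* > 16(1-\theta)(1 - \sqrt{1-\alpha})^2$, matching $\csparse(\alpha)$ for $\alpha \geq \tfrac34$. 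The hypothesis $C^* > \csparse(\alpha)$ therefore forces the exponent to be strictly positive, which gives (a) since polynomial growth in $n$ dominates $\sqrt{\log n}$; part (b) follows from the same exponent analysis applied to the ratio $\Ebeta[HC(t)]^2/\mathrm{Var}_{\bbeta,\lambda}(HC(t))$.

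The principal technical obstacle I expect is the uniform control of the $o(1)$ corrections in Lemma \ref{lemma:binomial_collection}(b) across the entire admissible range $\lambda \gg \log n$, together with the off-diagonal covariance bound. This last step is exactly where the dependence among the degrees (absent in the independent Binomial sequence setup of \cite{mukherjee2015hypothesis}) enters, and is tractable only because the shared edge $Y_{ij}$ reduces the covariance to a product of point probabilities that Lemma \ref{lemma:binomial_collection}(b)(i) is engineered to estimate sharply.
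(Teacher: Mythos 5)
Your route is essentially the paper's own: the paper proves this lemma by first establishing Proposition \ref{lemma:hc_main} (exponent asymptotics for $\Vzero$, for a lower bound on $\Ebeta(HC(t))$, and for $\mathrm{Var}_{\boldsymbol{\beta},\lambda}(HC(t))$), exactly via the decomposition you describe --- condition on the shared edge $Y_{ij}$, so that each covariance becomes $p_{ij}(1-p_{ij})$ times a product of binomial point masses, estimated by Lemma \ref{lemma:binomial_collection}, with the two-independent-binomials representation of $d_i$ handling signal vertices --- and then checks positivity of the resulting exponents at $r=\min\{1,C^*/(4(1-\theta))\}$ by the same case analysis you sketch. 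So the skeleton and the key identities are the same; the issues are in your bookkeeping of the alternative variance and in the final case analysis.

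The genuine flaw is the claim that the signal-vertex diagonal contribution to $\mathrm{Var}_{\boldsymbol{\beta},\lambda}(HC(t))$ is ``at most a lower-order correction since $s\ll n$.'' It is not: that term is $s\,\Pbeta(D_i>t)(1-\Pbeta(D_i>t))=n^{1-\alpha-(\sqrt r-\sqrt{r_A})^2+o(1)}$ (your $r_A=C^*/(16(1-\theta))$), while the noise-vertex diagonal is $n^{1-r+o(1)}$; when $r=1$ (i.e.\ $C^*>4(1-\theta)$, in particular whenever $\alpha\ge 3/4$) the latter is $n^{o(1)}$ while the former is polynomially large precisely in the detection regime, so the signal diagonal \emph{dominates}. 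The paper therefore records $\mathrm{Var}_{\boldsymbol{\beta},\lambda}(HC(t))=n^{\max\{1-\alpha-(\sqrt r-\sqrt{r_A})^2,\;1-r\}+o(1)}$ and part (b) requires positivity of \emph{both} $f_1=1-\alpha-(\sqrt r-\sqrt{r_A})^2$ and $f_2=2f_0$, whereas your ratio argument as written only compares $\Ebeta^2(HC(t))$ against $n^{1-r}$. The repair is cheap --- with your $f_0=\tfrac12-\alpha+\tfrac r2-(\sqrt r-\sqrt{r_A})^2$ one has $f_1=f_0+(1-r)/2\ge f_0>0$ (the paper instead shows $f_1>0$ directly, via a short contradiction argument when $C^*<4(1-\theta)$) --- but you must carry that term rather than discard it. Two minor points: you only need, and (by stochastic dominance of the $i\in S^c$ summands) only cheaply get, a \emph{lower} bound on $\Ebeta(HC(t))$, so ``$\asymp$'' is an overclaim; and since your case split is on $C^*\lessgtr 4(1-\theta)$ rather than on $\alpha\lessgtr 3/4$, the cross case $\alpha<3/4$, $C^*>4(1-\theta)$ needs the one-line observation that then $C^*>16(1-\theta)(1-\sqrt{1-\alpha})^2$ automatically, so the $r=1$ criterion applies --- the paper spells this out.
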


The Type II error of the HC statistic may be controlled immediately using Lemma \ref{lemma:power_hc}. This is straightforward--- however, we include a proof for the sake of completeness. For any alternative considered above, we have, using Chebychev's inequality and Lemma \ref{lemma:power_hc}, 
\be
\ &\Pbeta[HC > \sqrt{\log n}]\\ &\geq \Pbeta [GHC(t) \geq \sqrt{\log n}] 
\geq 1 - \frac{ \mathrm{Var}_{\boldsymbol{\beta},\lambda}\left(GHC(t)\right)}{(\Ebeta\left(GHC(t)\right) - \sqrt{\log n})^2} \to 1
\ee
as $n \to \infty$. This completes the proof, modulo that of Lemma \ref{lemma:power_hc}. 
\end{proof}

We describe the proof of Lemma \ref{lemma:power_hc} next. This necessitates a detailed understanding of the mean and variance of the $HC(t)$ statistics introduced in Section \ref{section:tests}. Due to centering, $HC(t)$ has mean $0$ under the null hypothesis. Our next proposition estimates the variances of the $HC(t)$ statistics under the null and the class of alternatives introduced above. We also lower bound the expectation of the $HC(t)$ statistics under the alternative. This is the most technical result of this paper and we defer the proof to the Appendix \ref{sec:proof_lemma_hc}. 

\begin{prop}
\label{lemma:hc_main}
Fix $\theta = \displaystyle\lim_{n\to \infty} \frac{\lambda}{2n}$. For $t = \lfloor\sqrt{2 r \log n}\rfloor$ with $r > \frac{C^*}{16(1-\theta)}$, we have,
\be
\lim_{n\to \infty} \frac{\log \Vzero(HC(t)) }{\log n} &= 1- r, \label{eq:null_var} \\ 
\lim_{n \to \infty} \frac{\log \Ebeta\left(HC(t)\right)}{\log n}  &\geq 1- \alpha -\frac{1}{2} \left(\sqrt{2r}-\sqrt{ \frac{C^*}{8(1-\theta)}}\right)^2 , \label{eq:alt_exp}\\
\lim_{n \to \infty}  \frac{ \log \mathrm{Var}_{\boldsymbol{\beta},\lambda}\left(HC(t)\right) }{\log n}&= \max\left\{ 1-\alpha -\frac{1}{2} \left(\sqrt{2r}-\sqrt{\frac{C^*}{8(1-\theta)}}\right)^2, 1- r \right\}. \\\label{eq:alt_var}
\ee
\end{prop}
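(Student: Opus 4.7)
The plan is to reduce all three estimates to tail and point-mass computations for sums of independent Binomials via Lemma \ref{lemma:binomial_collection}, controlling the dependence among $d_1,\ldots,d_n$ by conditioning on each shared edge $Y_{ij}$. Write $\mu_0 = \frac{\lambda(n-1)}{2n}$ and $\sigma_0 = \sqrt{(n-1)\frac{\lambda}{2n}(1-\frac{\lambda}{2n})}$, so that $\{D_i>t\}=\{d_i>\mu_0+t\sigma_0\}$; since $t/\sqrt{\log n}\to\sqrt{2r}$, Lemma \ref{lemma:binomial_collection}(a) immediately supplies both the tail $p_t:=\Pzero(D_i>t) = n^{-r+o(1)}$ and the point mass at the threshold, of order $n^{-r+o(1)}/\sqrt{\lambda}$.

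For \eqref{eq:null_var}, I would expand $\Vzero = np_t(1-p_t) + n(n-1)\,\mathrm{Cov}_{\bbeta=\mathbf{0},\lambda}(\I(D_1>t),\I(D_2>t))$. Writing $d_k=Y_{12}+\tilde d_k$ for $k=1,2$ with $Y_{12},\tilde d_1,\tilde d_2$ mutually independent and conditioning on $Y_{12}$, the pair covariance collapses to $\frac{\lambda}{2n}(1-\frac{\lambda}{2n})(p_{t,1}-p_{t,0})^2$, where $p_{t,y}=\P(\tilde d_1>\mu_0+t\sigma_0-y)$. Bounding $p_{t,1}-p_{t,0}$ by the point mass at the threshold yields an off-diagonal contribution of order $n^{1-2r+o(1)}$, dominated by the diagonal $n^{1-r+o(1)}$ for any $r>0$.

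For \eqref{eq:alt_exp}, I would restrict to $i\in S$, for which $d_i = X_i+Y_i$ with independent $X_i\sim\Bin(n-s,p_{SS^c})$ and $Y_i\sim\Bin(s-1,p_S)$, where $p_{SS^c}=\frac{\lambda}{n}\frac{e^A}{1+e^A}$ and $p_S=\frac{\lambda}{n}\frac{e^{2A}}{1+e^{2A}}$. The mean of $d_i$ exceeds $\mu_0$ by $\frac{\lambda(n-s)}{2n}\tanh(A/2)+\frac{\lambda(s-1)}{2n}\tanh(A)$, whose leading term $\frac{\lambda}{2}\tanh(A/2)\sim \frac{1}{4}\sqrt{C^*\lambda\log n}$ equals $\sqrt{C^*/[8(1-\theta)]}\cdot\sigma_0\sqrt{\log n}$. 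Since $p_S/p_{SS^c}\to 1$ and $s\ll\sqrt{n}$ in the regime $\alpha>1/2$, Lemma \ref{lemma:binomial_collection}(b)(ii) applied to $X_i+Y_i$ at the shifted threshold gives $\Pbeta(D_i>t)=n^{-\frac{1}{2}(\sqrt{2r}-\sqrt{C^*/[8(1-\theta)]})^2+o(1)}$; the $i\notin S$ contributions are non-negative after observing that the mean shift for such $i$ is only $\frac{\lambda s}{2n}\tanh(A/2) = O(s\sqrt{\lambda\log n}/n)$, negligible in units of $\sigma_0\sqrt{\log n}$. Summing over $|S|=n^{1-\alpha}$ yields \eqref{eq:alt_exp}.

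Finally, for \eqref{eq:alt_var}, the diagonal $\sum_i\Pbeta(D_i>t)(1-\Pbeta(D_i>t))$ decomposes into $|S|$ terms of size $n^{-\frac{1}{2}(\sqrt{2r}-\sqrt{C^*/[8(1-\theta)]})^2+o(1)}$ and $n-|S|$ terms of size $n^{-r+o(1)}$, immediately reproducing the max in \eqref{eq:alt_var}. The off-diagonal pair covariances, split by $|\{i,j\}\cap S|\in\{0,1,2\}$, are controlled as in the null case by conditioning on $Y_{ij}$: each covariance collapses to $\frac{\lambda}{2n}(1-\frac{\lambda}{2n})$ times a product of Binomial point-mass differences, to which Lemma \ref{lemma:binomial_collection}(b)(i) applies. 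The main technical obstacle is precisely this last step: under the alternative, the conditional law of $d_i$ given $Y_{ij}$ depends on the $S$-membership of both $i$ and $j$, forcing a four-way case split, and the sharp point-mass version Lemma \ref{lemma:binomial_collection}(b)(i) (rather than the cruder tail version) is needed to preserve a $1/\sqrt{\lambda}$ factor in each covariance, which is exactly what makes every off-diagonal sum subdominant to the diagonal and gives the matching upper bound.
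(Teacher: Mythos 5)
Your proposal is correct and follows essentially the same route as the paper: the diagonal-plus-covariance decomposition (under both null and alternative, with the alternative split by $S$-membership of the pair), conditioning on the shared edge $Y_{ij}$ to collapse each covariance into $p_{ij}(1-p_{ij})$ times a product of Binomial point-mass differences carrying the crucial $1/\sqrt{\lambda}$ factor from Lemma \ref{lemma:binomial_collection}, and the contaminated two-Binomial bounds of part (b) for vertices in $S$. One small fix: for \eqref{eq:alt_exp} the non-negativity of the $i\notin S$ contributions should be justified by the degrees being stochastically larger under the alternative (as the paper does), rather than by the mean shift for such $i$ being negligible, which by itself does not give a sign.
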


\begin{proof}[Proof of Lemma \ref{lemma:power_hc}]
We first look at the proof of Part(a). 
Using \eqref{eq:null_var} and \eqref{eq:alt_exp} we have,
\be
&\Ebeta\left(GHC(t)\right)\geq n^{(f_0(r)+o(1))}, \\
& f_0 (r) = \frac{1}{2} - \alpha - \frac{r}{2}  - \frac{C^*}{ 16(1-\theta)} + { \sqrt{\frac{rC^*}{4(1-\theta)} } } .
\ee
%
%
%

Thus it suffices to show that    $f_0(r) >0 $ for $r=\min\{1, \frac{C^*}{4(1-\theta)}\}$. Now, $C^* > \csparse(\alpha)$ implies that 
$C^* > 4(1-\theta)$ for $\alpha \geq 3/4$. Thus in this case, $r = 1$. Therefore, we have, for $\alpha \geq 3/4$,  
\be 
f_0 (r) = f_0(1)
&=1-\alpha-\left(1-{ \sqrt { \frac{C^*}{16(1-\theta)}  } } \right)^2>0,
\ee
as $16(1-\theta)\geq C^*> \csparse(\alpha)$. 
%
For $\alpha< 3/4$, we consider two cases. First, consider the case where $C^* > 4(1-\theta)$. As $\alpha< 3/4$, this 
implies that $C^* > 16(1-\theta) ( 1- \sqrt{1-\alpha})^2$. In this case, $r=1$ and the same argument outlined above still goes through. Finally, consider the case $C^* < 4(1-\theta)$. In this case,
 $r= \frac{C^*}{4(1-\theta)}\leq 1$. We have, 
\be 
f_0(r) = f_0\Big( \frac{C^*}{4(1-\theta)} \Big)=1/2-\alpha+ \frac{ C^* } {16(1-\theta) }>0, \quad \text{since} \ C^*> \csparse(\alpha).
\ee

Next, we turn to the proof of Part (b). 
We use \eqref{eq:alt_exp} and \eqref{eq:alt_var} to get, 
\be 
 \frac{\Ebeta^2\left(GHC(t)\right)}{\mathrm{Var}_{\boldsymbol{\beta},\lambda}\left(GHC(t)\right)} &=\frac{\Ebeta^2\left(HC(t)\right)}{\mathrm{Var}_{\boldsymbol{\beta},\lambda}\left(HC(t)\right)}
%
\geq \min\left\{n^{f_1(r) + o(1)} ,n^{f_2(r) + o(1)}\right\}, \\
f_1(r)&=1-\alpha- \frac{1}{2} \left(\sqrt{2r}-\sqrt{ \frac{ C^*}{ 8(1-\theta) } }\right)^2 \\
f_2(r)&=1-2\alpha-\left(\sqrt{2r}-\sqrt{ \frac{C^*}{8(1-\theta)} }\right)^2+ r.
\ee
%
Therefore, it suffices to show that both $f_1(r)$ and $f_2(r)$ are strictly positive for $r=\min\{1, \frac{C^*}{4(1-\theta)} \}$. To this end, we again note, as in Part (a), that $r=1$ for $\alpha\geq 3/4$.  In this case, 
\be
f_1(r)= f_1(1)&=1-\alpha-\left(1-\sqrt{ \frac{C^*}{16(1-\theta)}}\right)^2>0
\ee
as $\csparse(\alpha)<C^*\leq 16(1-\theta)$. Also,
$f_2(r)= f_2(1)=2f_1(1)>0.$
%

%
It remains to establish the desired proposition for $\alpha < 3/4$. Again, we split the argument into two cases. If $C^*> 4(1-\theta)$, $\alpha<3/4$ implies that $C^*> 16(1-\theta) (1- \sqrt{1-\alpha})^2$. In this case, $r=1$ and the argument outlined above still goes through. Finally, we consider the case $C^* < 4(1-\theta)$. 
 In this case, $r= \frac{C^*}{4(1-\theta)}$. We have,
\be 
f_2(r) = f_2 \left(\frac{C^*}{4(1-\theta)} \right)
&=\frac{1}{8(1-\theta)}\left(C^*- 16(1-\theta) \Big(\alpha- \frac{1}{2}\Big) \right)>0.
\ee
Finally, we prove that 
\be 
f_1(r)= f_1 \left(\frac{C^*}{4(1-\theta)} \right)=1-\alpha- \frac{C^*}{16(1-\theta)}>0.
\ee
The validity of the above display is established by contradiction. Suppose, if possible, $f_1(r) \leq 0$. Then one must have $C^*\geq 16(1-\theta)(1-\alpha) \geq 4(1-\theta)$ as $\alpha\leq 3/4$. This is a contradiction to the assumption that $C^* < 4(1-\theta)$. This completes the proof. 
%
%
%
%
 %
 \end{proof}

\begin{proof}[Proof of Theorem \ref{thm:sparse} \ref{thm:sparse_hopeful} \ref{thm:sparse_lowerbound}]
Recall the definition of the sub-parameter space $\widetilde{\Xi}(s,A_n)$ from the proof of Theorem \ref{thm:dense}. 
As in the proof of Theorem \ref{thm:dense}, let $\pi(d\bbeta)$ be a prior on $\widetilde{\Xi}(s,A_n)$ which puts mass $\frac{1}{{n\choose s}}$ on each of the configurations in $\widetilde{\Xi}(s,A)$, where $A=\sqrt{C^*\frac{\log{n}}{\lambda}}$ with $ C^*<\csparse(\alpha)$, $\theta=\lim \frac{\lambda}{2n}$, $s=n^{1-\alpha}$, $\alpha \in (1/2,1)$. Let $\Q_{\pi}(.):=\int \P_{\bbeta,\lambda}(.)\pi(d\bbeta)$ denote the marginal distribution of ${\bf Y}$ where $${\bf Y}|\bbeta \sim \P_{\bbeta,\lambda},\quad \bbeta\sim \pi,$$
and let $$L_\pi({\bf Y}):=\frac{\Q_{\pi}({\bf Y})}{\P_{\bbeta=0,\lambda}({\bf Y})}$$ 
 denote the likelihood ratio.  We introduce some notation for ease of exposition.
 For any function $h:2^{[n]}\rightarrow \mathbb{R}$, we denote
\be 
\E_S h(S)=\frac{1}{{n \choose s}}\sum\limits_{S \subseteq [n], \atop |S|=s}h(S).
\ee
Further, we set $\Sigma(S,S)= \sum_{i<j , i,j \in S} Y_{ij}$ and $\Sigma(S,S^c)= \sum_{i \in S, j \in S^c} Y_{ij}$. 
For $\bbeta \in \widetilde{\Xi}(s,A)$,  $S:=\mathrm{Support}(\bbeta)\subseteq [n]$, we define
\be 
 L_S&:=\prod\limits_{i<j}\left(\frac{2e^{\beta_i+\beta_j}}{1+e^{\beta_i+\beta_j}}\right)^{Y_{ij}}\left(\frac{1-\frac{\lambda}{n}\frac{e^{\beta_i+\beta_j}}{1+e^{\beta_i+\beta_j}}}{1-\frac{\lambda}{2n}}\right)^{1-Y_{ij}}\\
	&=\left(\frac{2e^{2A}}{1+e^{2A}}\right)^{\Sigma(S,S)}\left(\frac{1-\frac{\lambda}{n}\frac{e^{2A}}{1+e^{2A}}}{1-\frac{\lambda}{2n}}\right)^{{s \choose 2}-\Sigma(S,S)}\\ & \times \left(\frac{2e^{A}}{1+e^{A}}\right)^{\Sigma(S,S^c)}\left(\frac{1-\frac{\lambda}{n}\frac{e^{A}}{1+e^{A}}}{1-\frac{\lambda}{2n}}\right)^{s(n-s)-\Sigma(S,S^c)}.
\ee

Armed with this notation, we note that 
$L_\pi({\bf Y})=\E_S(L_S)$. 
Next, for $S \subseteq [n]$ and $i \in S$, we define the event 
\be 
\Gamma_{S,i}:=\left\{\frac{\sum\limits_{j \in S^c}Y_{ij}-(n-s)\frac{\lambda}{2n}}{\sqrt{(n-s)\frac{\lambda}{2n}\left(1-\frac{\lambda}{2n}\right)}}\leq \sqrt{2\log{n}}\right\}.
\ee
Subsequently, for any $S\subseteq [n]$, we introduce the truncation event,
\be 
\Gamma_S:=\bigcap_{i\in S}\Gamma_{S,i}. 
\ee

Let
$\tilde{L}:=\frac{1}{{n \choose s}}\sum\limits_{S \subseteq [n], \atop |S|=s}L_S \one_{\Gamma_S}=\E_SL_S \one_{\Gamma_S}$.
The thesis follows provided we establish the following (see for example \cite{arias2013community}). 
	\be 
	\Ezero(\tilde{L})=1+o(1), \label{eqn:firstmoment_toshow}\\
	\Ezero\left(\tilde{L}\right)^2=1+o(1). \label{eqn:secondmoment_toshow}
	\ee
	
\eqref{eqn:firstmoment_toshow} is established in Section \ref{subsec:proof1} while \eqref{eqn:secondmoment_toshow} is established in Section \ref{subsec:proof2}. This completes the proof. 
\subsection{Proof of \eqref{eqn:firstmoment_toshow}}
\label{subsec:proof1}
\be 
\Ezero\left(\tilde{L}\right)&=\Ezero\E_S L_S\one_{\Gamma_S}=\E_S\Ezero L_S\one_{\Gamma_S}=1-\E_S\Ezero L_S\one_{\Gamma^c_{S}}.
\ee
Therefore, it suffices to show that
$\E_S\Ezero L_S\one_{\Gamma^c_{S}}=o(1)$.
Now,
\be 
\Ezero L_S\one_{\Gamma^c_{S}}\leq \sum\limits_{i \in S}\Ezero L_S\one_{\Gamma^c_{S,i}}, \label{eqn:firstmoment_firstsimplification}
\ee
and 
\be 
\Ezero L_S\one_{\Gamma^c_{S,i}}&=\Ezero\left(	\left(\frac{2e^{A}}{1+e^{A}}\right)^{\sum\limits_{j \in S^c}Y_{ij}}\left(\frac{1-\frac{\lambda}{n}\frac{e^{A}}{1+e^{A}}}{1-\frac{\lambda}{2n}}\right)^{(n-s)-\sum\limits_{j \in S^c}Y_{ij}}\one_{\Gamma^c_{S,i}}\right)\\
&=\P\left(\frac{\Bin\left(n-s,\frac{\lambda}{n}\frac{e^A}{1+e^A}\right)-\frac{(n-s)\lambda}{2n}}{\sqrt{(n-s)\frac{\lambda}{2n}\left(1-\frac{\lambda}{2n}\right)}}>\sqrt{2\log{n}}\right).
\ee
The last equality in the above display follows by noting that $$\left(\frac{2e^{A}}{1+e^{A}}\right)^{\sum\limits_{j \in S^c}Y_{ij}}\left(\frac{1-\frac{\lambda}{n}\frac{e^{A}}{1+e^{A}}}{1-\frac{\lambda}{2n}}\right)^{(n-s)-\sum\limits_{j \in S^c}Y_{ij}}$$ is the Radon-Nikodym derivative of $\Bin\left(n-s,\frac{\lambda}{n}\frac{e^A}{1+e^A}\right)$ with respect to $\Bin\left(n-s,\frac{\lambda}{2n}\right)$ which is in turn the distribution of $\sum\limits_{j \in S^c}Y_{ij}$ under $\Pzero$. Denoting $W\sim \Bin\left(n-s,\frac{\lambda}{n}\frac{e^A}{1+e^A}\right)$, we have,
\be 
\Ezero L_S\one_{\Gamma^c_{S,i}}
&=\P\left(\frac{W-\frac{(n-s)\lambda}{n}\frac{e^A}{1+e^A}}{\sqrt{(n-s)\frac{\lambda}{n}\frac{e^A}{1+e^A}\left(1-\frac{\lambda}{n}\frac{e^A}{1+e^A}\right)}}>\eta_n\right),
\ee
where
\be 
\eta_n=\frac{(n-s)\frac{\lambda}{n}\left(\frac{1}{2}-\frac{e^A}{1+e^A}\right)+\sqrt{2\log{n} {(n-s)\frac{\lambda}{2n}\left(1-\frac{\lambda}{2n}\right)} }}{\sqrt{(n-s)\frac{\lambda}{n}\frac{e^A}{1+e^A}\left(1-\frac{\lambda}{n}\frac{e^A}{1+e^A}\right)}}.
\ee
Now noting that $\frac{e^A}{1+e^A}=\frac{1}{2}+\frac{A}{4}+O(A^2)$, it is easy to see that
\be 
\eta_n=\sqrt{2\log{n}}\left(1-\sqrt{\frac{C^*}{16(1-\theta)}}+a_n\right),
\ee
where $a_n\rightarrow 0$ as $n \rightarrow \infty$, and the sequence is independent of specific $S\subseteq [n]$ with $|S|=s$.
Since $\lambda \gg \log{n}$, we have using part Lemma \ref{lemma:binomial_collection} Part (b, ii),
\be 
\ &  \Ezero L_S\one_{\Gamma^c_{S,i}} \\&=\P\Bigg(\frac{W-\frac{(n-s)\lambda}{n}\frac{e^A}{1+e^A}}{\sqrt{(n-s)\frac{\lambda}{n}\frac{e^A}{1+e^A}\left(1-\frac{\lambda}{n}\frac{e^A}{1+e^A}\right)}}>\sqrt{2\log{n}}\Bigg(1-\sqrt{\frac{C^*}{16(1-\theta)}}+a_n\Bigg)\Bigg)\\& \leq n^{-\left(1-\sqrt{\frac{C^*}{16(1-\theta)}}\right)^2+o(1)}. \label{eqn:firstmoment_secondsimplification}
\ee
Therefore combining \eqref{eqn:firstmoment_firstsimplification} and \eqref{eqn:firstmoment_secondsimplification} along with the independence of $a_n$ from specific $S$'s, we have
\be 
\E_S\Ezero L_S\one_{\Gamma^c_{S}}& \leq\E_S n^{1-\alpha-\left(1-\sqrt{\frac{C^*}{16(1-\theta)}}\right)^2+o(1)}\\&=n^{1-\alpha-\left(1-\sqrt{\frac{C^*}{16(1-\theta)}}\right)^2+o(1)}=o(1),
\ee
since $1-\alpha-\left(1-\sqrt{\frac{C^*}{16(1-\theta)}}\right)^2<0$ whenever $ C^*<\csparse(\alpha)$. The completes the verification of \eqref{eqn:firstmoment_toshow}.
\subsection{Proof of \eqref{eqn:secondmoment_toshow}}
\label{subsec:proof2}

For any function $h:2^{[n]}\times 2^{[n]}\rightarrow \mathbb{R}$, define
\be 
\E_{S_1,S_2} h(S_1,S_2)=\frac{1}{{n \choose s}^2}\sum\limits_{(S_1,S_2) \subseteq [n]\times [n], \atop |S_1|=|S_2|=s}h(S_1,S_2).
\ee
This allows us to express the second moment of the truncated likelihood ratio as follows. 
\be 
\Ezero\left(\tilde{L}\right)^2&=\Ezero\left(\E_SL_S \one_{\Gamma_S}\right)^2\\
&=\E_{S_1,S_2}\left(\Ezero\left(L_{S_1}L_{S_2}\one_{\Gamma_{S_1}\cap \Gamma_{S_2}}\right)\right).
\ee
Before proceeding, we introduce some notations. For any set $T \subseteq [n]$, the set $\{(i,j)\in T\}$ denotes all pairs $i<j$ such that both $i$ and $j$ are in $T$. Recall that for any $T\subseteq [n]$, $\Sigma(T,T)= \sum_{(i,j) \in T} Y_{ij}$ while for any two sets 
$S, T \subseteq [n]$, $\Sigma(S, T) = \sum_{i \in S, j \in T} Y_{ij}$. For ease of notation, we will use $\Sigma(S) = \Sigma(S,S)$ if there is no scope for confusion. For $S_1, S_2$ subsets of $[n]$, let $Z=|S_1 \cap S_2|$. Now if $S_1:=\mathrm{Support}(\bbeta)\subseteq [n]$ and  $S_2:=\mathrm{Support}(\bbeta')\subseteq [n]$ for $\bbeta$, $\bbeta' \in \widetilde{\Xi}(s,A)$, we have  
\be 
 L_{S_1}L_{S_2} 
&=\prod\limits_{i<j}\left(\frac{4e^{\beta_i+\beta_j+\beta_i'+\beta_j'}}{(1+e^{\beta_i+\beta_j})(1+e^{\beta_i'+\beta_j'})}\right)^{Y_{ij}}\\ &\times\left(\left(\frac{1-\frac{\lambda}{n}\frac{e^{\beta_i+\beta_j}}{1+e^{\beta_i+\beta_j}}}{1-\frac{\lambda}{2n}}\right)\left(\frac{1-\frac{\lambda}{n}\frac{e^{\beta_i'+\beta_j'}}{1+e^{\beta_i'+\beta_j'}}}{1-\frac{\lambda}{2n}}\right)\right)^{1-Y_{ij}}  . \label{eqn:second_moment_mastereqn}
\ee 
Further, for $i \in S_1 \cap S_2$, define 
\be 
\C_i=\left\{\sum\limits_{j \in (S_1 \cup S_2)^c}Y_{ij}\leq (n-s)\frac{\lambda}{2n}+\sqrt{ 2(n-s)\log{n}\frac{\lambda}{2n}\left(1-\frac{\lambda}{2n}\right)}\right\}. \\ \label{eqn:Ci_sets}
\ee
Finally, we set 
$
\C:=\bigcap_{i \in S_1 \cap S_2}\C_i. 
$
%
%
Thus, we have, 
\be 
\ & \Gamma_{S_1}\cap \Gamma_{S_2}\\&=\left\{\begin{array}{c}\sum\limits_{j \in S_1^c}Y_{ij}\leq (n-s)\frac{\lambda}{2n}+\sqrt{2\log{n}}\sqrt{(n-s)\frac{\lambda}{2n}\left(1-\frac{\lambda}{2n}\right)}, \forall i \in S_1,\\
\sum\limits_{j \in S_2^c}Y_{ij}\leq (n-s)\frac{\lambda}{2n}+\sqrt{2\log{n}}\sqrt{(n-s)\frac{\lambda}{2n}\left(1-\frac{\lambda}{2n}\right)}, \forall i \in S_2\end{array}\right\}\\
& \subseteq \left\{\sum\limits_{j \in S_1^c \cap S_2^c}Y_{ij}\leq (n-s)\frac{\lambda}{2n}+\sqrt{2\log{n}}\sqrt{(n-s)\frac{\lambda}{2n}\left(1-\frac{\lambda}{2n}\right)}, \forall i \in S_1\cap S_2 \right\}\\&=\C. 
\ee 
This implies that $ \Ezero\left(L_{S_1}L_{S_2}\one_{\Gamma_{S_1}\cap \Gamma_{S_2}}\right)
\leq \Ezero\left(L_{S_1}L_{S_2}\one_{\C}\right)$. We note that the event $\C$ is a function of the edges $\{ Y_{ij}: i \in S_1 \cap S_2, j \in (S_1 \cup S_2)^c \}$. Further, it is easy to see from \eqref{eqn:second_moment_mastereqn} that 
\be
L_{S_1} L_{S_2}& =   \Upsilon \left(\frac{4e^{2A}}{(1+e^A)^2}\right)^{\Sigma(S_1 \cap S_2, (S_1 \cup S_2)^c)}\\ &\times\left(\frac{1-\frac{\lambda}{n}\frac{e^A}{1+e^A}}{(1-\frac{\lambda}{2n})}\right)^{2 \big(Z(n-2s+Z)-\Sigma(S_1 \cap S_2, (S_1 \cup S_2)^c) \big)},  \label{eqn:second_moment_intermediate} 
\ee
where $\Upsilon$ is independent of $\{ Y_{ij}: i \in S_1 \cap S_2, j \in (S_1 \cup S_2)^c \}$. We next make the following observation. 

\begin{lemma}
\label{lemma:upsilon_value}
$\Ezero(\Upsilon)=1 + o(1)$, uniformly over $Z$ . 
\end{lemma}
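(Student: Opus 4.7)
The plan is to exploit the factorization of $\Upsilon$ over independent edges under $\Pzero$ and then to show that most edge-types contribute unit expectation \emph{exactly}, so the remaining contributions are controlled by a Taylor expansion and are $o(1)$ when $\alpha>1/2$.

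From \eqref{eqn:second_moment_mastereqn}, $\Upsilon$ is the product of the per-edge factors in $L_{S_1}L_{S_2}$ over all edges $\{i,j\}$ except those bridging $S_1\cap S_2$ and $(S_1\cup S_2)^c$. Since these factors depend on disjoint independent $Y_{ij}$'s under $\Pzero$, independence yields $\Ezero(\Upsilon)=\prod_{\{i,j\}}\Ezero(\text{per-edge factor})$. I would then classify the remaining edges by the membership of their endpoints in the partition $(S_1\cap S_2,\,S_1\setminus S_2,\,S_2\setminus S_1,\,(S_1\cup S_2)^c)$.

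The key observation is that whenever an edge has its endpoints supported in only one of $\bbeta,\bbeta'$ but not both (explicitly: both endpoints in $S_1\setminus S_2$, both in $S_2\setminus S_1$, or one endpoint in $S_1\triangle S_2$ and the other in $(S_1\cup S_2)^c$), the factor in $L_{S_1}L_{S_2}$ collapses to a one-sided Radon--Nikodym derivative of $\Bin(1,p_{ij}(\bbeta))$ (or of its $\bbeta'$ analogue) with respect to $\Bin(1,\lambda/(2n))$, which has expectation exactly $1$ under $\Pzero$. Edges with both endpoints in $(S_1\cup S_2)^c$ trivially contribute factor $1$. The only surviving types are the three ``symmetric'' ones from the first three cases in the proof of Theorem \ref{thm:dense}\ref{thm:dense_lower}: both endpoints in $S_1\cap S_2$ ($\binom{Z}{2}$ edges), one in $S_1\cap S_2$ and one in $S_1\triangle S_2$ ($2Z(s-Z)$ edges), and one in $S_1\setminus S_2$ with the other in $S_2\setminus S_1$ ($(s-Z)^2$ edges). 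The per-edge expectation for each such edge is $T_{S_1,S_2}^{ij}(A)$ in the notation of that earlier proof, and a two-term Taylor expansion around $A=0$ gives $T_{S_1,S_2}^{ij}(A) = 1 + O(A^2\lambda/n)$ uniformly.

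Taking logarithms and summing,
\[
\log \Ezero(\Upsilon) \;=\; O\!\left(\frac{A^2 \lambda}{n}\cdot s^2\right) \;=\; O\!\left(n^{1-2\alpha}\log n\right),
\]
since the total number of nontrivial edges is at most $\binom{Z}{2}+2Z(s-Z)+(s-Z)^2\le 2s^2$, $A^2\lambda = C^*\log n$, and $s=n^{1-\alpha}$. For $\alpha>1/2$ the right-hand side is $o(1)$ uniformly in $Z\in\{0,1,\ldots,s\}$, which delivers $\Ezero(\Upsilon)=1+o(1)$ as claimed. There is no substantial obstacle; the critical insight is the unit-expectation cancellation arising from the one-sided Radon--Nikodym structure on edges touching $(S_1\cup S_2)^c$ from $S_1\triangle S_2$ (and the within-$S_j\setminus S_{3-j}$ edges), which confines the nontrivial product to $O(s^2)$ edges rather than $\Theta(n^2)$ and makes $\alpha>1/2$ precisely sufficient.
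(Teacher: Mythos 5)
Your proof is correct and follows essentially the same route as the paper: factor $\Ezero(\Upsilon)$ over independent edge classes, note that the edges touching $(S_1\cup S_2)^c$ from $S_1\triangle S_2$ (and the one-sided within-$S_1\triangle S_2$ edges) have per-edge expectation exactly $1$ under $\Pzero$, and Taylor-bound the remaining $O(s^2)$ edges by $1+O(A^2\lambda/n)$ to get $\exp\bigl(O(n^{1-2\alpha}\log n)\bigr)=1+o(1)$ uniformly in $Z$. The only cosmetic difference is that the paper invokes the exact unit expectation only for the $\Upsilon_5$-type edges and absorbs the within-$S_1\triangle S_2$ edges into the crude $C_2 s^2$ count, which is equivalent to what you do.
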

The proof is similar to that of the lower bound in Theorem \ref{thm:dense} and thus will be deferred to Appendix \ref{appendix:section_upsilon}. 
Therefore, we have, setting $T_i:=\sum\limits_{j \in (S_1 \cup S_2)^c}Y_{ij}$ for $i \in S_1 \cap S_2$, using Lemma \ref{lemma:upsilon_value} and \eqref{eqn:second_moment_intermediate} along with the independence of the collections $\{ Y_{ij} : j \in (S_1 \cup S_2)^c \}$, $ i \in S_1 \cap S_2$, 
\be 
\ & \Ezero\left(L_{S_1}L_{S_2}\one_{\Gamma_{S_1}\cap \Gamma_{S_2}}\right)\leq \Ezero\left(L_{S_1}L_{S_2}\one_{\C}\right)\\
&= (1+ o(1)) \left\{\Ezero\left(\left(\frac{4e^{2A}}{(1+e^A)^2}\right)^{T_i }\left(\frac{1-\frac{\lambda}{n}\frac{e^A}{1+e^A}}{(1-\frac{\lambda}{2n})}\right)^{2\left((n-2s+Z)-T_i\right)}\one_{\C_i}\right)\right\}^Z\\
&=(1+ o(1)) g(A)^{Z(n-2s+Z)}\left(\Ezero\left(f(A)^{T_i}g(A)^{-T_i}\one_{\C_i}\right)\right)^Z, \label{eqn:changeofmeasure_todo}
\ee
where we define 
\be 
f(x):=\frac{4e^{2x}}{(1+e^x)^2}, \,\,\,\,   \Big(1-\frac{\lambda}{2n} \Big)^2g(x):=\Big(1-\frac{\lambda}{n}\frac{e^x}{1+e^x}\Big)^2 . 
\ee

Applying Lemma \ref{lemma:binomial_changeof_measure} with $\psi(A,\lambda,n) = \frac{f(A)}{g(A)} \frac{\lambda}{2n} + 1 - \frac{\lambda}{2n}$,  
we immediately have
\be 
\ &\Ezero\left(f(A)^{T_i}g(A)^{-T_i}\one_{\C_i}\right)\\
&=\psi(A,\lambda,n)^{n-2s+Z} \P\left(X\leq (n-s)\frac{\lambda}{2n}+\sqrt{2\log{n}}\sqrt{(n-s)\frac{\lambda}{2n}\left(1-\frac{\lambda}{2n}\right)}\right),\\
 \label{eqn:changeofmeasure_inyourface}
\ee
where $X \sim \Bin\left(n-2s+Z,\frac{f(A)\lambda}{(2n) g(A)\psi(A,\lambda,n)}\right)$. Combining \eqref{eqn:changeofmeasure_todo} and \eqref{eqn:changeofmeasure_inyourface}, we have, 
\be 
\ &\Ezero\left(L_{S_1}L_{S_2}\one_{\Gamma_{S_1}\cap \Gamma_{S_2}}\right)\leq
(1+ o(1))\times  \\ & \left[ \left\{g(A)\psi(A,\lambda,n)\right\}^{n-2s+Z}\P\left(X\leq (n-s)\frac{\lambda}{2n}+\sqrt{2\log{n}}\sqrt{(n-s)\frac{\lambda}{2n}\left(1-\frac{\lambda}{2n}\right)}\right)\right]^Z. \\ \label{eqn:secondmoment_simplified}
\ee
We will bound the RHS of \eqref{eqn:secondmoment_simplified} to complete the proof. 
First, using Taylor expansion, we have, 
\be 
f^*(x)=g(x)\psi(x,\lambda,n) 
=1+\frac{\lambda}{8n} \frac{x^2}{(1-\frac{\lambda}{2n})}+\frac{\frac{\lambda}{2n}}{\left(\frac{\lambda}{2n}-1\right)}x^3f_*(\xi(x)), \label{eqn:fstar_taylor}
\ee
for some $|\xi(x)|\leq |x|$ and
\be 
f_*(\xi)= \frac{2\left(e^{\xi}\left(11 e^{\xi}-11 e^{2\xi}+e^{3\xi}-1\right)\right)}{3(e^{\xi}+1)^5}.
\ee
%
Next, we analyze the success probability for the distribution of $X$ in \eqref{eqn:secondmoment_simplified}.
\be 
\delta_n&:=\frac{f(A)\frac{\lambda}{2n}}{g(A)\psi(A,\lambda, n)}
=\frac{\frac{2\lambda}{n}\frac{e^{2A}}{(1+e^A)^2}}{f^*(A)}
=\frac{\frac{2\lambda}{n}\frac{e^{2A}}{(1+e^A)^2}}{1+\frac{\lambda A^2}{8n\left(1-\frac{\lambda}{2n}\right)}+A^3f_*(\xi(A))}.
\ee
Moreover, using Taylor expansion for $f$, we have, 
\be 
\frac{e^{2x}}{(1+e^x)^2}=\frac{1}{4}+\frac{x}{4}+\frac{x^2}{16}+x^3g_*(\zeta(x))
\ee
for some $|\zeta(x)|\leq |x|$ and
\be 
g_*(\zeta)=\frac{e^{2\zeta}\left(e^{2\zeta}-7e^{\zeta}+4\right)}{3(e^{\zeta}+1)^5}.
\ee
Therefore, combining everything, we have, 
\be 
\delta_n&=\frac{2\lambda}{n}\,\,\cdot\,\frac{\frac{1}{4}+\frac{A}{4}+\frac{A^2}{16}+A^3g_*(\zeta(A))}{1+\frac{\lambda A^2}{8n\left(1-\frac{\lambda}{2n}\right)}+A^3f_*(\xi(A))}
=\frac{\lambda}{2n}\left(1+A(1+b_nA)\right),
\ee
for $\{b_n\}_{n\geq 1}$ such that $b_n\leq C$ for sufficiently large $n$ and some universal constant $C$.
Now we make use of the following lemma
\begin{lemma}\label{lemma:binomial_tail_forlowerbound} There exists a sequence  $\kappa_n(Z)$ such that $\limsup\limits_{n \rightarrow \infty}\sup\limits_{Z\leq s}\kappa_n(Z)=0 $ and
\be
\ &\frac{(n-s)\frac{\lambda}{2n}-(n-2s+Z)\delta_n+\sqrt{2\log{n}}\sqrt{(n-s)\frac{\lambda}{2n}\left(1-\frac{\lambda}{2n}\right)}}{\sqrt{(n-2s+Z)\delta_n\left(1-\delta_n\right)}}
\\&=\sqrt{2\log{n}}\left(1-\sqrt{\frac{C^*}{4(1-\theta)}}+\kappa_n(Z)\right).
\ee
\end{lemma}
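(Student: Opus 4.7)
The plan is a careful Taylor expansion of both numerator and denominator, tracking error terms uniformly in $Z \leq s$. Set $p := \lambda/(2n) \to \theta$ and substitute $\delta_n = p(1+A(1+b_nA))$ with $A = \sqrt{C^*\log n/\lambda}$. The key smallnesses are $A \to 0$ (since $\lambda \gg \log n$), $s/n = n^{-\alpha} \to 0$ (since $\alpha > 1/2$), and $|Z|/n \leq s/n$, each depending on $Z$ only through the uniform bound $Z \leq s$.

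First I would expand the numerator. Algebraically,
$$(n-s)p - (n-2s+Z)\delta_n \;=\; p\big[(s-Z) - (n-2s+Z)A(1+b_nA)\big].$$
The piece $p(s-Z) \leq ps = \lambda n^{-\alpha}/2$ is $o(\sqrt{\lambda\log n})$ whenever $\alpha > 1/2$ and $\lambda \leq n$, so it is absorbed into the error. Using $(n-2s+Z)/n = 1 + O(s/n) = 1+o(1)$ uniformly in $Z$, together with $|b_n| \leq C$ and $A^2 = C^*\log n/\lambda = o(1)$, the drift becomes
$$(n-2s+Z)pA(1+b_nA) \;=\; \tfrac{1}{2}\sqrt{C^*\lambda\log n}\,(1+o(1)).$$
Likewise $\sqrt{2\log n}\sqrt{(n-s)p(1-p)} = \sqrt{2\log n}\sqrt{(\lambda/2)(1-\lambda/(2n))}\,(1+o(1))$ uniformly in $Z$ (in fact independent of $Z$).

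For the denominator, since $\delta_n = p(1+o(1))$ and $(n-2s+Z) = n(1+o(1))$ uniformly in $Z$, I get $\sqrt{(n-2s+Z)\delta_n(1-\delta_n)} = \sqrt{(\lambda/2)(1-\lambda/(2n))}\,(1+o(1))$. Dividing numerator by denominator collapses everything to
$$\sqrt{2\log n} - \sqrt{\tfrac{C^*\log n}{2(1-\lambda/(2n))}} + o(\sqrt{\log n}) \;=\; \sqrt{2\log n}\Big(1 - \sqrt{\tfrac{C^*}{4(1-\theta)}} + \kappa_n(Z)\Big),$$
and $\kappa_n(Z) \to 0$ uniformly in $Z\leq s$ since every remainder is controlled either by terms independent of $Z$ (namely $A$ and $\lambda/(2n)-\theta$) or by the uniform bound $(n-2s+Z)/n - 1 = O(s/n)$.

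The main technical point is checking that all remainders are genuinely $o(\sqrt{\log n})$ compared with the principal $\sqrt{2\log n}$ terms. The borderline cases are: (i) the cubic Taylor piece, which contributes an additive term of order $\sqrt{\log n}\cdot A$, hence $o(\sqrt{\log n})$ precisely when $\lambda \gg \log n$, and (ii) the stray $p(s-Z)$ piece, which requires $\alpha > 1/2$ together with $\lambda \leq n$. Both hypotheses are in force, so the expansion closes, and the uniformity in $Z$ comes for free because $Z$ enters the principal ratio only through the multiplicative factor $(n-2s+Z)/n = 1+o(1)$.
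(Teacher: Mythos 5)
Your proposal is correct and follows essentially the same route the paper intends: the paper dismisses this lemma as ``simple algebra'' using $\lambda\le n$, $s\ll\sqrt{n}$, and $\frac{1-\lambda/2n}{1-\theta}=1+o(1)$, and your expansion is precisely that algebra carried out, with the $p(s-Z)$ term killed by $\alpha>1/2$ and $\lambda\le n$, the drift $(n-2s+Z)\delta_n$ handled via $A\to0$ (from $\lambda\gg\log n$), and uniformity in $Z$ coming from $(n-2s+Z)/n=1+O(s/n)$.
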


The proof will be deferred to Appendix \ref{sec:technical_lemmas}. 
Applying Lemma \ref{lemma:binomial_tail_forlowerbound} along with Lemma \ref{lemma:binomial_collection}  Part (a, ii) we have 
\be 
\ & \P\left(X\leq (n-s)\frac{\lambda}{2n}+\sqrt{2\log{n}}\sqrt{(n-s)\frac{\lambda}{2n}\left(1-\frac{\lambda}{2n}\right)}\right)\\
&=\P\left(\frac{X-(n-2s+Z)\delta_n}{\sqrt{(n-2s+Z)\delta_n\left(1-\delta_n\right)}}\leq \sqrt{2\log{n}}\left(1-\sqrt{\frac{C^*}{4(1-\theta)}}+\kappa_n\right)\right)\\
&\leq \one_{C^*\leq 4(1-\theta)}+\exp\left(-\log{n}\left(1-\sqrt{\frac{C^*}{4(1-\theta)}}+\kappa_n(Z)\right)^2\right)\one_{C^*> 4(1-\theta)}\\&:=p_n(C^*,Z).
\ee
Therefore by \eqref{eqn:changeofmeasure_inyourface} and \eqref{eqn:fstar_taylor}
\be 
\ &\Ezero\left(L_{S_1}L_{S_2}\one_{\Gamma_{S_1}\cap \Gamma_{S_2}}\right)\leq (1 + o(1)) f^*(A)^{Z(n-2s+Z)}\left(p_n(C^*,Z) \right)^Z.
\ee
Using \eqref{eqn:fstar_taylor}, we note that  $f_*$ is uniformly bounded by a universal constant, and thus for sufficiently large $n$, $f^*(A)\geq 1$. Consequently, $n$ sufficiently large, 
\be
{f^*(A)}^{Z(n-2s+Z)} \leq  {f^*(A)}^{nZ}. 
\ee
Hereafter, we divide our analysis into two cases according to the value of $C^*/(4(1-\theta))$.
\subsubsection{Case I: $C^*\leq 4(1-\theta)$:} In this case $p_n(C^*,Z)=1$.  Therefore, using \eqref{eqn:fstar_taylor}, we have, 
\be 
\Ezero\left(L_{S_1}L_{S_2}\one_{\Gamma_{S_1}\cap \Gamma_{S_2}}\right) &\leq  (1+ o(1)) (f^*(A))^{nZ}\\
\leq (1+ o(1)) & \exp\left(Zn\left(\frac{\lambda}{8n} \frac{A^2}{(1-\frac{\lambda}{2n})}+\frac{\frac{\lambda}{2n}}{\left(\frac{\lambda}{2n}-1\right)}A^3f_*(\xi(A))\right)\right).
\ee
The uniform universal upper bound on  $f_*$ implies that for $n$ sufficiently large, 
\be
f^*(A)- 1 = \frac{\lambda}{8n}\frac{A^2}{(1-\frac{\lambda}{2n})}+\frac{\frac{\lambda}{2n}}{\left(\frac{\lambda}{2n}-1\right)}A^3f_*(\xi(A))\geq 0.
\ee
Now, we note that $Z \sim Hypergeometric(n,s,s)\stackrel{d}{\lesssim} W\sim \Bin\left(s,\frac{s}{n-s}\right)$ where $\stackrel{d}{\lesssim}$ denotes stochastic ordering. As a result, for sufficiently large $n$
\be 
 \E_{S_1,S_2}\Ezero\left(L_{S_1}L_{S_2}\one_{\Gamma_{S_1}\cap \Gamma_{S_2}}\right) 
&\leq  (1 + o(1)) \E\exp\left(Wn (f^*(A) -1) \right)\\
&=(1+ o(1)) \exp\left(n^{1-2\alpha+\frac{C^*}{8(1-\theta)}+o(1)}\right).
\ee
We note that $1-2\alpha+\frac{C^*}{8(1-\theta)}<0$ if $C^*<16(1-\theta)\left(\alpha-\frac{1}{2}\right)$ and in this case, we have the desired result. This is indeed true when $\alpha\leq  \frac{3}{4}$. Therefore, we are left with the region when $\alpha>\frac{3}{4}$ and $16(1-\theta)\left(\alpha-\frac{1}{2}\right)\leq C^*< 16(1-\theta)\left(1-\sqrt{1-\alpha}\right)^2.$ In this region $C^* > 4(1-\theta)$ which corresponds to our next subsection.
 \subsubsection{Case II: $C^*>4(1-\theta)$:} As deduced in the last subsection, it remains to consider the case $C^* > 4(1-\theta)$, which immediately implies $\alpha>\frac{3}{4}$. \\
 
 In this case $p_n(C^*,Z)=\exp\left(-\log{n}\left(1-\sqrt{\frac{C^*}{4(1-\theta)}}+\kappa_n(Z)\right)^2\right)$.  Therefore
 \be 
 \ & \Ezero\left(L_{S_1}L_{S_2}\one_{\Gamma_{S_1}\cap \Gamma_{S_2}}\right)
 \\ &\leq (1+ o(1)) \left(f^*(A)\right)^{nZ}\exp\left(-Z\log{n}\left(1-\sqrt{\frac{C^*}{4(1-\theta)}}+\kappa_n(Z)\right)^2\right)\\
 &\leq (1+ o(1)) \exp(Zn(f^*(A)-1))\exp\left(-Z\log{n}\left(1-\sqrt{\frac{C^*}{4(1-\theta)}}+\kappa_n(Z)\right)^2\right).
 \ee
 We note that $\limsup\limits_{n \rightarrow \infty}\sup\limits_{Z\leq s}\kappa_n(Z)=0 $ by Lemma \ref{lemma:binomial_tail_forlowerbound}, and $\lim_{n\rightarrow \infty}\frac{\lambda}{2n}=\theta$ with $\lambda\gg \log{n}$. Thus we have, using \eqref{eqn:fstar_taylor}, for any $\epsilon>0$, there exists an $n_{\epsilon}$ such that for all $n \geq n_{\epsilon}$
 \be 
 \E_{S_1,S_2}\Ezero\left(L_{S_1}L_{S_2}\one_{\Gamma_{S_1}\cap \Gamma_{S_2}}\right)&\leq (1+o(1)) \E_{S_1,S_2}\exp\left\{Z\log{n}\left(\psi(C^*)+\epsilon\right)\right\} 
 \ee
with $\psi(\cdot)$ defined as follows. For $x >0$, we set 
 \be 
 \psi(x)=\frac{x}{8(1-\theta)}-\left(1-\sqrt{\frac{x}{4(1-\theta)}}\right)^2. 
 \ee
 We note that, $\psi'(x) >0$ 
whenever $0<x<16(1-\theta)$. As a result, for $4(1-\theta)<C^*<16(1-\theta)$
 \be 
 \psi(C^*)\geq \psi(4(1-\theta))=\frac{1}{2}>0.
 \ee
 Therefore, for any $\epsilon>0$, for all $n \geq n_{\epsilon}$, we have, 
 \be 
 \ &\E_{S_1,S_2}\Ezero\left(L_{S_1}L_{S_2}\one_{\Gamma_{S_1}\cap \Gamma_{S_2}}\right)
\\& \leq (1 + o(1))  \E\exp\left\{Z\log{n}\left(\psi(C^*)+\epsilon\right)\right\}\\
 &\leq (1+ o(1)) \E\exp\left\{W\log{n}\left( \psi(C^*)+\epsilon\right)\right\} \\ 
 & \leq (1+o(1)) \exp\left\{n^{1-2\alpha+\frac{C^*}{8(1-\theta)}-\left(1-\sqrt{\frac{C^*}{4(1-\theta)}}\right)^2+\epsilon} \right\},
\ee
where $W\sim \Bin\left(s,\frac{s}{n-s}\right)$. The second inequality above follows from the fact that $Z \stackrel{d}{\lesssim} W$. 
Now, we note that 
\be 
\ &1-2\alpha+\frac{C^*}{8(1-\theta)}-\left(1-\sqrt{\frac{C^*}{4(1-\theta)}}\right)^2
\\&=2\left\{(1-\alpha)-\left(1-\sqrt{\frac{C^*}{16(1-\theta)}}\right)^2\right\}.
\ee
Since $C^*<16(1-\theta)(1-\sqrt{1-\alpha})^2$ we have that $(1-\alpha)<\left(1-\sqrt{\frac{C^*}{16(1-\theta)}}\right)^2$. Therefore there exists $\delta>0$ such that $1-2\alpha+\frac{C^*}{8(1-\theta)}-\left(1-\sqrt{\frac{C^*}{4(1-\theta)}}\right)^2<-\delta$. Choosing $\epsilon=\delta/2$ we have for all $n \geq n_{\delta/2}$
\be 
n^{1-2\alpha+\frac{C^*}{8(1-\theta)}-\left(1-\sqrt{\frac{C^*}{4(1-\theta)}}\right)^2+\epsilon}\leq n^{-\delta/2},
\ee
which therefore completes the proof of \eqref{eqn:secondmoment_toshow}.
\end{proof}

\subsection{Proof of Theorem \ref{thm:max_degree_test}} 
\begin{proof}[Proof of \ref{thm:max_degree_test}\ref{thm:max_degree_upper}]
Using monotonicity arguments without loss of generality one can consider the alternative  $\Pbeta$ where $\beta$ is given by
\begin{align*}
\beta_i=A\text{ for }i\in S,\quad
0\text{ otherwise},
\end{align*}
where $A=\sqrt{C^*\frac{\log{n}}{\lambda}}$ for some $C^*$ with  $$16(1-\theta)\geq  C^*>\cmax(\alpha):=16(1-\theta)(1-\sqrt{1-\alpha})^2,$$ and $|S|=s=n^{1-\alpha}$.
Given $C^*$ let $\delta>0$ be such that 
$$C^*>16(1-\theta)[\sqrt{1+\delta}-\sqrt{1-\alpha}]^2.$$
 and let $\phi_n$ be the sequence of tests which rejects when $\max_{i\in [n]}d_i>k_n(\delta)$, and accepts otherwise, where \be p_n&:=\frac{\lambda}{2n}.\\
  k_n(\delta) &:=np_n +\sqrt{2(1+\delta)np_n(1-p_n)\log{n}}\\&=\frac{\lambda}{2}+\sqrt{(1+\delta)\lambda\Big(1-\frac{\lambda}{2n}\Big)\log{n}}.
  \ee
Thus, using FKG inequality gives
\begin{align*}
1- \Ezero\phi_n
\ge \Pzero(d_1\le k_n(\delta))^n
\ge \Big(1-n^{-(1+\delta+o(1)}\Big)^n,
\end{align*}
where the last inequality uses  Part (a, ii) of Lemma \ref{lemma:binomial_collection}. Since the RHS above converges to $1$, it is enough to show that $$\sup_{\beta\in \Xi(s,A)}\Pbeta(\max_{i=1}^{n} d_i\leq k_n(\delta))\stackrel{n\rightarrow\infty}{\rightarrow}0.$$ 
 
 With
 $d_1',\cdots,d_{s}'\stackrel{i.i.d.}{\sim}\Bin(n-s,p_n')$ with $p_n':=\frac{\lambda}{n}\frac{e^{A}}{1+e^{A}}$,  it is easy to see that $\max_{i=1}^{n} d_i$ is stochastically larger than $\max_{i=1}^{s} d_i'$, and so it suffices to show that 
 \begin{align}
 \Pbeta(\max_{i=1}^{s} d_i'\leq k_n(\delta))=\P(d_1'\le k_n(\delta))^s=(1-\P(d_1'> k_n(\delta)))^s\stackrel{n\rightarrow\infty}{\rightarrow}0.\label{eq:max_deg_attain}
 \end{align}
  To this effect, note that
\begin{align*}
\lim_{n\rightarrow\infty}\frac{k_n(\delta)-(n-s)p_n'}{\sqrt{(n-s)p_n'(1-p_n')\log (n-s)}}=-\frac{\sqrt{2C^*}}{4\sqrt{1-\theta}}+\sqrt{2(1+\delta)}.
\end{align*} 
Also note that the assumption $C^*\le 16(1-\theta)$ implies the limit above is positive.  
 Since $s=n^{1-\alpha}$ and $(1-x)=\exp(-x+O(x^2))$ as $x \rightarrow 0$, \eqref{eq:max_deg_attain} will follow from Lemma \ref{lemma:binomial_collection} Part (a, ii) if we can show that 
 \begin{align*}
& \Big[-\frac{\sqrt{2C^*}}{4\sqrt{1-\theta}}+\sqrt{2(1+\delta)}\Big]^2< 2(1-\alpha)\\
 \Leftrightarrow &-\frac{\sqrt{C^*}}{4\sqrt{1-\theta}}+\sqrt{1+\delta}< \sqrt{1-\alpha}\\
 \Leftrightarrow &\frac{\sqrt{C^*}}{4\sqrt{1-\theta}}>\sqrt{1+\delta}-\sqrt{1-\alpha},
 \end{align*}
 which holds by choice of $\delta$. This completes the proof of the upper bound.
 \end{proof}

 \begin{proof}[Proof of \ref{thm:max_degree_lower}]
 
 To show the lower bound, again consider an alternative of the form $\P_{\beta,\lambda}$, where $\beta$ is given by 
 $$\beta_i=A\text{ for }i\in S,\quad 0\text{ otherwise,}$$
 where $A=\sqrt{C^*\frac{\log n}{\lambda}}$ for some positive $C^*$ with
 $$C^*<\cmax(\alpha):=16(1-\theta)(1-\sqrt{1-\alpha})^2.$$
 and $|S|=s=n^{1-\alpha}$.
 
 Suppose, to the contrary, that there is a sequence of consistent tests based on $\max_{i\in [n]}d_i$. Thus there exists sequence of positive reals $\{k_n\}_{n\ge 1}$ such that
 $$\lim_{n\rightarrow\infty}\Pzero(\max_{i\in [n]}d_i\le k_n)=1,\quad \lim_{n\rightarrow\infty}\Pbeta(\max_{i\in [n]}d_i\le k_n)=0.$$
 
 
 
 Denote $p_n=\lambda/2n$, $q_n=1-p_n$ and consider the sequence $\delta_n$ such that
 \be
 k_n&=np_n+(2np_nq_n\log{n})^{1/2}\left(1-\frac{\log\log{n} + \log (4 \pi)} {4\log{n}}+\frac{\delta_n}{2 \log n} \right).
 \ee
{ We first claim that $\delta_n \rightarrow \infty$. The proof of the claim follows immediately since by Corollary 3.4 of \cite{bollobas}, we have for any fixed $\delta\in \mathbb{R}$ that
	\be 
	\ &\Pzero\left(\dmax<np_n+(2np_nq_n\log{n})^{1/2}\left(1-\frac{\log\log{n}+ \log(4 \pi)}{4\log{n}}+\frac{\delta}{2\log{n}}\right)\right)\\ &\rightarrow e^{-e^{-\delta}}, \label{eqn:maxdegree_null_distribution}
	\ee
	whenever $\frac{np_nq_n}{(\log{n})^3}\rightarrow \infty$, a condition that holds by our assumption of $\lambda \gg (\log{n})^3$. Indeed, if $\delta_n \leq M$ for some $ 0 < M < \infty$, then $$\limsup \Pzero(\dmax \le k_n) \leq e^{{e}^{-M}} <1.$$ }
 

  We now show that for any such choice of $k_n$ the probability of making a type II error converges to $1$. To this end note that letting $S:=\{l:\beta_l\neq 0\}$, by union bound we have the following inequality for $i \in S$ 
 \be 
 \Pbeta\left(\dmax>k_n\right)&\leq s\Pbeta\left(d_i>k_n\right)+\Pbeta\left(\max\limits_{j \in S^c}d_j>k_n\right).
 \ee
We now show that individually, the two summands in the last display converges to $0$. First, if $i \in S:=\{l:\beta_l\geq 0
\}$, we have for $X_n' \perp Y_n'$ with $X_n' \sim \Bin\left(s-1,\frac{\lambda}{n}f(2A)\right)$, $Y_n' \sim \Bin\left(n-s,\frac{\lambda}{n}f(A)\right)$ (where as usual $f(x)=\frac{e^x}{1+e^x}$)
\be 
\ & s\Pbeta\left(d_i> k_n\right)
=s\Pbeta(X_n'+Y_n'> k_n)
 \leq n^{1-\alpha-\left(1-\sqrt{C^*/16(1-\theta)}\right)^2+o(1)}=o(1).
\ee
The last inequality follows by calculations similar to those leading to proof of Lemma \ref{lemma:hc_main} upon invoking Lemma \ref{lemma:binomial_collection} Part (b, ii) and the last equality follows by the property of $C^*<\cmax(\alpha)$.

The control of $\Pbeta\left(\max\limits_{j \in S^c}d_j>k_n\right)$ is in philosophy similar to that of understanding the null behavior of $\dmax$. However, one needs to carefully overcome the contamination by signals in each of the degrees involved. In particular, for any sequence $k_n'$,
\be 
\Pbeta\left(\max\limits_{j \in S^c}d_j>k_n\right) &= \P_{\beta} \left( \max_{j \in S^c} (X_j + Y_j) > k_n \right) \nonumber\\
&\leq \P_{\beta}\left( \max_{j \in S^c} X_j + \max_{j \in S^c} Y_j  > k_n \right) \nonumber\\
&\leq \Pbeta(\max\limits_{i=1}^n X_i>k_n')+\Pbeta(\max\limits_{i=1}^n Y_i>k_n-k_n'), \\ \label{eqn:hopefully_lasthurdle}
\ee
where $X_i \stackrel{i.i.d.}{\sim} \Bin(s,\frac{\lambda}{n}f(A))$ and $Y_i \stackrel{i.i.d.}{\sim} \Bin(n-s-1,\frac{\lambda}{n}f(0))$. 
We choose $k_n' = \frac{\sqrt{n p_n q_n } a_n}{\sqrt{2 \log n}}$ for some sequence $a_n \to \infty$ sufficiently slow, to be chosen appropriately. 
%
Then by union bound and Bernstein's Inequality 
\be 
\ &\Pbeta(\max\limits_{i=1}^n X_i>k_n')\\
&\leq n\Pzero\left(X_1-s\frac{\lambda}{n}f(A)>k_n'-s\frac{\lambda}{n}f(A)\right)\\
&\leq n\exp\left(-\frac{\frac{1}{2}(k_n'')^2}{s\frac{\lambda}{n}f(A)(1-\frac{\lambda}{n}f(A))+\frac{1}{3}k_n''}\right), \quad k_n''=k_n'-s\frac{\lambda}{n}f(A)\\
&\leq n\exp\left(-Ck_n'\right),
\ee
where the last inequality holds for a universal constant $C>0$ since for any $a_n \to \infty$, $k'_n\gg s\frac{\lambda}{n}f(A)$ (since $s = n^{1- \alpha}$, $\alpha > \frac{1}{2}$, and $\lambda \leq n$). 
Since $\lambda \gg (\log n)^3$ we have $k_n' \gg \log n$ and therefore
\be 
\Pbeta(\max\limits_{i=1}^n X_i>k_n')\rightarrow 0,\label{eqn:last_hurdle_partI}
\ee
as $n \rightarrow \infty$. Now note that by stochastic ordering
\be 
\Pbeta(\max\limits_{i=1}^n Y_i>k_n-k_n')&\leq \Pbeta(\max\limits_{i=1}^n Y_i'>k_n-k_n')
\ee
where $Y_i' \stackrel{i.i.d.}{\sim} \Bin(n,p_n)$. But,
\be 
\ & k_n-k_n'\\&=np_n+(2np_nq_n\log{n})^{1/2}\left(1-\frac{\log\log{n}}{4\log{n}}-\frac{\log(2\pi^{1/2})}{2\log{n}}+\frac{\delta_n- a_n}{2\log{n}}\right)\\
\ee
For any sequence $\delta_n \to \infty$, we can choose $a_n \to \infty$ sufficiently slow such that $\delta_n - a_n \to \infty$ as $n\to \infty$. 
But due to convergence to a continuous distribution, the convergence in \eqref{eqn:maxdegree_null_distribution} is uniform. Therefore 
\be 
\Pbeta(\max\limits_{i=1}^n Y_i'>k_n-k_n')\rightarrow 0. \label{eqn:last_hurdle_partII}
\ee
The proof is therefore complete by combining \eqref{eqn:hopefully_lasthurdle}, \eqref{eqn:last_hurdle_partI}, and \eqref{eqn:last_hurdle_partII}.
 %
\end{proof}

\bibliographystyle{imsart-nameyear}
\bibliography{biblio_beta_model-2}

\appendix

\section{Proof of Binomial Lemma}\label{sec:proof_of_binomial_lemma}
\begin{proof}
	\begin{enumerate}
		\item[(a)]
		
		\item[(i)]
		The upper and lower bounds follow from \cite[Theorem 1.2]{bollobas} and \cite[Theorem 1.5]{bollobas} respectively.
		\item[(ii)]

		The upper bound follows from \cite[Theorem 1.3]{bollobas}.
		The lower bound follows from part \cite[Theorem 1.6]{bollobas} if  $n\min(p_n,1-p_n)\gg \log^3n$, and from Part (a, i) otherwise.
		
		\item[(b)]
		\item[(i)]
		%
		%

		For the upper bound, fixing $\delta>0$  
		and setting
		$$t_n':=\delta\sqrt{np_n(1-p_n)\log n},\quad t_n:=np_n+C_n\sqrt{np_n(1-p_n)\log n}$$ we have
		\begin{align*}
			&\P(X_n+Y_n=t_n)\\
			\le &\P(Y_n>t_n')+\max_{r=0}^{t_n'}\P(X_n=t_n-r)\sum_{r=0}^{t_n'}\P(Y_n=r)\\ 
			\le& \P(Y_n> t_n')+\max_{r=0}^{t_n'}\P(X_n=t_n-r).
		\end{align*}
		For bounding the first term above, note that $\lim_{n\rightarrow\infty}(t_n'-b_np_n')=\infty$, and so $t_n'\ge 2b_np_n'$ for all $n$ large. This observation, along with an application of Bernstein's inequality gives 
		\begin{align*}
			\P(Y_n>t_n')\le \text{exp}\Big\{-\frac{\frac{1}{2}(t_n'-b_np_n')^2}{b_np_n'(1-p_n')+\frac{t_n'-b_np_n'}{3}}\Big\}\le e^{-\frac{3}{24}t_n'}.
		\end{align*}
		But then $t_n'\gg \log n$, and so we have 
		\begin{align}\label{eq:negligible}
			\frac{1}{\log n}\log \P(Y_n>t_n')=-\infty.
		\end{align}
		Thus it suffices to control the second term. Since
		$$t_n-t_n'=np_n+(C_n-\delta)\sqrt{np_n(1-p_n)\log n},$$ the bound then follows to note that
		$$\max_{0\le r\le t_n'}\P(X_n=t_n-r)=\P(X_n=t_n-t_n')$$
		where we have used the fact that $t_n-t_n'-np_n\rightarrow\infty$ and the observation that the binomial distribution is unimodal.
		Finally, using part (e)  gives
		$$\P(X_n=t_n-t_n')
		=\frac{1}{\sqrt{np_n}} n^{-\frac{(C-\delta)^2}{2}+o(1)},$$
		from which the result follows since $\delta>0$ is arbitrary.
		\\

		Turning towards the lower bound, note that
		\begin{align*}
			\P(X_n+Y_n=t_n)&\ge \sum_{r=0}^{t_n'}\P(X_n=t_n-r)\P(Y_n=r)
			\\&\ge \P(X_n=t_n)\P(Y_n\le t_n'),
		\end{align*}
		as $\min_{r=0}^{t_n'}\P(X_n=t_n-r)=\P(X_n=t_n)$. Since $\P(Y_n\le t_n')$ converges to 1 (from \eqref{eq:negligible}), the result then follows on using part (e).

		\item[(ii)]

		For the upper bound, we have
		\begin{align*}
			\P(X_n+Y_n\ge t_n)\le &\P(X_n\ge t_n-t_n')+\P(Y_n\ge t_n'),
		\end{align*}
		from which one can ignore $\P(Y_n\ge t_n')$ using \eqref{eq:negligible}. Using part (a, ii) gives
		$$\P(X_n\ge t_n-t_n')\le n^{-\frac{(C-\delta)^2}{2}+o(1)},$$ from which the result follows since $\delta>0$ is arbitrary.
		
		Also in this case the lower bound follows trivially, on noting that
		$$\P(X_n+Y_n\ge t_n)\ge \P(X_n\ge t_n)$$ and using part (a).
	\end{enumerate}
\end{proof}

\section{Technical Lemmas for Lower Bound}\label{sec:technical_lemmas}

\subsection{Proof of Lemma \ref{lemma:upsilon_value}}
\label{appendix:section_upsilon}
We recall from the proof of Theorem \ref{thm:sparse}\ref{thm:sparse_hopeless}, the definitions of $f(x)=\frac{e^x}{1+e^x}$ and $$h(x)=4\mu f(c_1 x) f(c_2 x) + \frac{(1- 2 \mu f(c_1x))(1-2 \mu f(c_2 x))}{1-\mu},$$ where $0 \leq \mu\leq 1$ and $c_1,c_2>0$. Then note that $\Upsilon=\prod\limits_{j=1}^5\Upsilon_i$, where 
\be 
\Upsilon_1&=\left(4f(2A)^2\right)^{\Sigma(S_1\cap S_2)}\left(\frac{1-\frac{\lambda}{n}f(2A)}{1-\frac{\lambda}{2n}}\right)^{2({Z \choose 2}-\Sigma(S_1\cap S_2))},
\ee
\be 
\Upsilon_2&=\left(4f(A)f(2A)\right)^{\Sigma(S_1\cap S_2, S_1\Delta S_2)} \\ &\times\left(\frac{1-\frac{\lambda}{n}f(2A)}{1-\frac{\lambda}{2n}}\cdot\frac{1-\frac{\lambda}{n}f(A)}{1-\frac{\lambda}{2n}}\right)^{2Z(s-Z)-\Sigma(S_1\cap S_2, S_1\Delta S_2)},
\ee
\be
\Upsilon_3&=\left(4f(A)^2\right)^{\Sigma(S_1\cap S_2^c,S_1^c\cap S_2)}\left(\frac{1-\frac{\lambda}{n}f(A)}{1-\frac{\lambda}{2n}}\right)^{2((s-Z)^2-\Sigma(S_1\cap S_2^c,S_1^c\cap S_2))},
\ee
\be 
\Upsilon_4&=\left(4f(2A)f(0)\right)^{\Sigma(S_1\cap S_2^c)+\Sigma(S_1^c\cap S_2)}\\ &\times\left(\frac{1-\frac{\lambda}{n}f(2A)}{1-\frac{\lambda}{2n}}\right)^{2{s-Z \choose 2}-(\Sigma(S_1\cap S_2^c)+\Sigma(S_1^c\cap S_2))},
\ee
\be 
\Upsilon_5&=\left(4f(A)f(0)\right)^{\Sigma(S_1\Delta S_2,(S_1\cup S_2)^c)}\\ &\times\left(\frac{1-\frac{\lambda}{n}f(A)}{1-\frac{\lambda}{2n}}\right)^{2(s-Z)(n-2s+Z)-\Sigma(S_1\Delta S_2,(S_1\cup S_2)^c)}.
\ee
As in the proof of Theorem \ref{thm:sparse}\ref{thm:sparse_hopeless}, we have that for each realization of $S_1$ and $S_2$, $\Ezero(\Upsilon_i)$ is exactly of the form $h$, with $\mu= \lambda/2n$ and appropriate $c_1,c_2$. {We note that the $\Upsilon_i$'s are independent of each other and $\E_{\bbeta=0,\lambda}[\Upsilon_5]=1$. It follows, by arguments exactly similar to those leading to \eqref{eq:upperbound_intermediate}, that there exists universal constants $C_1, C_2>0$ such that}
\be 
\Ezero(\Upsilon)&\leq \Big(1+C_1\frac{\lambda A^2}{n}\Big)^{C_2 s^2}\leq \exp\Big(C_1C_2\frac{s^2\lambda A^2}{n}\Big)\\
&=\exp(C_1C_2C^*n^{1-2\alpha}\log{n})=1+o(1), \quad \text{since} \quad \alpha>\frac{1}{2}.
\ee
\subsection{Proof of Lemma \ref{lemma:binomial_tail_forlowerbound}}
The proof of the lemma follows from simple algebra along with the facts that $\lambda \leq n$, $s \ll \sqrt{n}$, and $\frac{1-\lambda/2n}{1-\theta}=1+o(1).$ 

\section{Proof of Proposition 6.5}\label{sec:proof_lemma_hc}

\subsection{Proof of \eqref{eq:null_var}}
We set $$a(t)=\Pzero\left(D_1>t\right)$$ and $$b(t)=\Pzero\left(D_1>t, D_2>t\right)$$ to get  
\be 
\Vzero(HC(t))&=na(t) (1- a(t))+n(n-1)(b(t)-a^2(t)).
\ee

$D_1$ and $D_2$ have some dependence through the common edge $Y_{12}$. We decompose the probabilities according to the value attained by $Y_{12}$ and use the independence of the edges to get 
\be 
b(t)&=\Pzero\left(\frac{d_1-\frac{\lambda}{2n}(n-1)}{\sqrt{(n-1)\frac{\lambda}{2n}\left(1-\frac{\lambda}{2n}\right)}}>t, \frac{d_2-\frac{\lambda}{2n}(n-1)}{\sqrt{(n-1)\frac{\lambda}{2n}\left(1-\frac{\lambda}{2n}\right)}}>t\right)\\
&=\frac{\lambda}{2n}(a'(t))^2+\left(1-\frac{\lambda}{2n}\right)(a''(t))^2, \label{eq:decomp1} \\
a'(t)&=\Pzero\left(\frac{\sum_{j \neq 2}Y_{1,j}-\frac{\lambda}{2n}(n-2)}{\sqrt{(n-2)\frac{\lambda}{2n}\left(1-\frac{\lambda}{2n}\right)}}>t\sqrt{\frac{n-1}{n-2}}-\sqrt{\frac{1-\frac{\lambda}{2n}}{(n-2) \frac{\lambda}{2n}}}\right), \label{eq:defn_a1} \\ 
a''(t)&=\Pzero\left(\frac{\sum_{j \neq 2}Y_{1,j}-\frac{\lambda}{2n}(n-2)}{\sqrt{(n-2)\frac{\lambda}{2n}\left(1-\frac{\lambda}{2n}\right)}}>t\sqrt{\frac{n-1}{n-2}}+\sqrt{\frac{ \frac{\lambda}{2n}}{(n-2)(1- \frac{\lambda}{2n})}}\right).\\ \label{eq:defn_a2}
\ee



Similarly, we condition on the value of $Y_{12}$ and use the independence of edges to get
\be 
a(t)=\frac{\lambda}{2n}a'(t)+\left(1-\frac{\lambda}{2n}\right)a''(t). \label{eq:decomp2}
\ee
Therefore, we have, using \eqref{eq:decomp1} and \eqref{eq:decomp2}, 
\be 
n(n-1)(b(t)-a^2(t))&=n(n-1)\frac{\lambda}{2n}\left(1-\frac{\lambda}{2n}\right)(a'(t)-a''(t))^2. \label{eq:null_cov_bound}
\ee
Now, using \eqref{eq:defn_a1} and \eqref{eq:defn_a2}, we have, 
\be  
a'(t)-a''(t)
&=\Pzero\left(\sum_{j \neq 2}Y_{1,j}= \left \lceil(n-1)\frac{\lambda}{2n}+ t\sqrt{(n-1)\frac{\lambda}{2n}\left(1-\frac{\lambda}{2n}\right)}\,\right\rceil- 1\right)
\\&=\frac{n^{-r+o(1)}}{\sqrt{ \lambda }},
\ee
where the last line uses Part (a, i) of Lemma \ref{lemma:binomial_collection}, along with the fact that $\sum_{j\ne 2}Y_{1,j}\sim \Bin(n-2,p)$ with $p=\lambda/2n$. 

Therefore, we have, using \eqref{eq:null_cov_bound}, 
\be 
\ & n(n-1)(b(t)-a^2(t))
\leq   n(n-1)\frac{\lambda}{2n}\left(1-\frac{\lambda}{2n}\right) \frac{n^{-2r+o(1)}}{\lambda} =O( n^{1-2r+o(1)}) .\\ \label{eqn:null_covariance}
\ee
Also by Lemma \ref{lemma:binomial_collection} Part (a, ii), 
\be 
a(t)=\Pzero(D_1>t)= n^{-(r+o(1))}. \label{eqn:null_var_diagonal}
\ee
Therefore, combining \eqref{eqn:null_covariance} and \eqref{eqn:null_var_diagonal} we get
\be 
\Vzero(HC(t))&=na(t) (1- a(t))+n(n-1)(b(t)-a^2(t))\\
&= n^{1-r+o(1)}. \label{eqn:null_variance}
\ee

This completes the proof of \eqref{eq:null_var}.

\subsection{Proof of \eqref{eq:alt_var}}
Recall that the alternative distribution $\Pbeta$ is such that $\beta_i=A$ for $i\in S$ and $\beta_i=0$ otherwise, where $A=\sqrt{C^*\frac{\log{n}}{\lambda}}$ with $16(1-\theta)\geq C^*>\csparse(\alpha)$, $\theta=\lim \frac{\lambda}{2n}$, $|S|=s=n^{1-\alpha}$, $\alpha \in (1/2,1)$.
We begin with the following set of notation. 
\be 
a^{(s)}(t)&=\Pbeta(D_i>t), \quad i \in S, \\
a^{(n-s)}(t) &=\Pbeta(D_i>t), \quad i \in S^c, \\
b^{(s)}(t) &=\Pbeta(D_i>t,D_j>t), \quad (i,j) \in S\times S, \\
b^{(n-s)}(t) &=\Pbeta(D_i>t,D_j>t), \quad (i,j) \in S^c\times S^c, \\
b^{(s,n-s)}(t) &=\Pbeta(D_i>t,D_j>t), \quad (i,j) \in S\times S^c,  \label{eq:notation}
\ee
The variance of $HC(t)$ under any $\Pbeta$ considered above can be decomposed as follows.  
\be
\ &\mathrm{Var}_{\boldsymbol{\beta},\lambda}\left(HC(t)\right) := \sum_{i=1}^{5} T_i, \label{eq:var_decomp_five}\\
T_1 &= sa^{(s)}(t)(1-a^{(s)}(t)), \\
T_2 &= (n-s)a^{(n-s)}(t)(1-a^{(n-s)}(t)), \\
T_3 &= s(s-1)(b^{(s)}(t)-(a^{(s)}(t))^2), \\
T_4 &= (n-s)(n-s-1)(b^{(n-s)}(t)-(a^{(n-s)}(t))^2), \\
T_5 &= 2 s(n-s)(b^{(s,n-s)}(t)-a^{(s)}(t)a^{(n-s)}(t)).
\ee

The basic idea of the proof is that the diagonal terms $T_1,T_2$ dominate over the covariance terms $T_3$, $T_4$ and $T_5$. The next Lemma collects the necessary details.  

\begin{lemma}
	\label{lemma:bounds_five}
	Fix $\theta = \displaystyle\lim_{n\to \infty} \frac{\lambda}{2n}$. For $t =\lfloor  \sqrt{2 r \log n}\rfloor$ with $r > \frac{C^*}{16(1-\theta)}$, we have,
	\be
	\lim_{n\to \infty} \frac{\log T_1}{\log n} &= 1-\alpha -\frac{1}{2} \left(\sqrt{2r}-\sqrt{\frac{C^*}{8(1-\theta)}}\right)^2, \quad
	\lim_{n \to \infty} \frac{\log T_2}{\log n}  = 1-r, \\
	\lim_{n \to \infty} \frac{\log T_3}{\log n} &\leq 1-2\alpha-\left(\sqrt{2r}-\sqrt{\frac{C^*}{8(1-\theta)}} \right)^2,  \quad
	\lim_{n\to \infty} \frac{\log T_4}{\log n} \leq 1-2r, \\
	\lim_{n \to \infty}& \frac{\log T_5}{\log n} \leq 1-\alpha-\frac{1}{2}\left(\sqrt{2r}-\sqrt{\frac{C^*}{8(1-\theta)}} \right)^2- r. 
	\ee
\end{lemma}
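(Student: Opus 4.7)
The plan is to compute each of the five terms separately by exploiting the fact that, under $\Pbeta$, the degree $d_i$ decomposes as a sum of two independent binomials whose parameters depend on whether $i \in S$ or $i \in S^c$. This places the tail probabilities $a^{(s)}(t)$, $a^{(n-s)}(t)$ and the relevant point probabilities precisely in the regime covered by Lemma \ref{lemma:binomial_collection}(b). Throughout I will write $f(x) = e^x/(1+e^x)$ and use the Taylor expansions $f(A) = 1/2 + A/4 + O(A^2)$ and $f(2A) = 1/2 + A/2 + O(A^2)$, valid because $A = \sqrt{C^* \log n/\lambda} \to 0$ under the assumption $\lambda \gg \log n$.

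For the diagonal terms $T_1$ and $T_2$, I decompose $d_i = X_i + Y_i$ where $X_i$ collects edges into $S^c$ and $Y_i$ collects edges into the other signal vertices. The key step is to rewrite the null-calibrated threshold $\frac{\lambda(n-1)}{2n} + t\sqrt{(n-1)\frac{\lambda}{2n}(1-\frac{\lambda}{2n})}$ in the standardization natural for $X_i + Y_i$, namely $(n-s)p_n + C_n\sqrt{(n-s)p_n(1-p_n)\log n}$. For $i \in S$, a Taylor expansion combined with $A\sqrt{\lambda} = \sqrt{C^*\log n}$ should yield $C_n \to \sqrt{2r} - \sqrt{C^*/(8(1-\theta))}$; for $i \in S^c$ the signal contamination via the $s$ edges into $S$ contributes $s \lambda A/n = O(n^{1/2 - \alpha}\sqrt{\log n}) = o(1)$ since $\alpha > 1/2$, so $C_n \to \sqrt{2r}$. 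Applying Lemma \ref{lemma:binomial_collection}(b, ii) then gives $a^{(s)}(t) = n^{-\frac{1}{2}(\sqrt{2r} - \sqrt{C^*/(8(1-\theta))})^2 + o(1)}$ and $a^{(n-s)}(t) = n^{-r + o(1)}$, from which the stated asymptotics of $T_1 \sim s\, a^{(s)}(t)$ and $T_2 \sim (n-s)\, a^{(n-s)}(t)$ follow (noting $1 - a^{(s)}(t), 1 - a^{(n-s)}(t) \to 1$).

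For the three off-diagonal terms, the only source of dependence between $D_i$ and $D_j$ is the shared edge $Y_{ij}$. Conditioning on $Y_{ij}$ and using the independence of the remaining edges yields the identity
\begin{equation*}
\mathrm{Cov}_{\bbeta,\lambda}\bigl(\one_{\{D_i > t\}}, \one_{\{D_j > t\}}\bigr) = p_{ij}(1-p_{ij})\,\Delta_i \Delta_j,
\end{equation*}
where $p_{ij} = \frac{\lambda}{n}f(\beta_i + \beta_j)$ and $\Delta_i$ is the difference between the conditional tail probabilities of $D_i$ given $Y_{ij} = 1$ versus $Y_{ij} = 0$. Since flipping $Y_{ij}$ shifts the relevant integer threshold for $\sum_{k \neq j} Y_{ik}$ by one, $|\Delta_i|$ equals the point mass of this sum at the nearest integer to the threshold. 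Applying Lemma \ref{lemma:binomial_collection}(b, i) with the same standardization as in the diagonal step produces $|\Delta_i| = \lambda^{-1/2}\, n^{-\frac{1}{2}(\sqrt{2r} - \sqrt{C^*/(8(1-\theta))})^2 + o(1)}$ for $i \in S$ and $|\Delta_i| = \lambda^{-1/2}\, n^{-r + o(1)}$ for $i \in S^c$. Combining with $p_{ij}(1-p_{ij}) = \Theta(\lambda/n)$ and multiplying by the combinatorial factors $s(s-1)$, $(n-s)(n-s-1)$ and $2s(n-s)$, the factor $\lambda/n \cdot 1/\lambda = 1/n$ combines with the point-probability exponents to give exactly the bounds claimed for $T_3$, $T_4$ and $T_5$.

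The hard part will be the Taylor-expansion bookkeeping in the diagonal step: extracting the constant $\sqrt{2r} - \sqrt{C^*/(8(1-\theta))}$ requires carefully balancing the three recentering contributions of order $\lambda A$, $t\sqrt{\lambda}$ and $\lambda s/n$, and verifying that the remainders $O(\lambda A^2) = O(\log n)$ and $O(\lambda s/n)$ are $o(\sqrt{\lambda \log n})$; this is precisely where the hypotheses $\lambda \gg \log n$ and $\alpha > 1/2$ enter. The mild condition $p_n'/p_n \to 1$ required to invoke Lemma \ref{lemma:binomial_collection}(b) is immediate from $f(A), f(2A) \to 1/2$.
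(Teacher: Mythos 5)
Your proposal is correct and follows essentially the same route as the paper: decompose each degree into independent binomials and rewrite the null-calibrated threshold so that Lemma \ref{lemma:binomial_collection}(b, ii) gives $a^{(s)}(t)$ and $a^{(n-s)}(t)$ for the diagonal terms, and condition on the shared edge $Y_{ij}$ so that the covariance factors as $p_{ij}(1-p_{ij})\Delta_i\Delta_j$ with each $\Delta_i$ a binomial point mass controlled by Lemma \ref{lemma:binomial_collection}(b, i). Your single covariance identity is just a unified statement of the case-by-case computation the paper carries out for $T_3$, $T_4$, $T_5$, and your bookkeeping of the recentering constants and of where $\lambda \gg \log n$ and $\alpha > 1/2$ enter matches the paper's argument.
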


Lemma \ref{lemma:bounds_five} along with \eqref{eq:var_decomp_five} immediately implies \eqref{eq:alt_var}. We outline the 
proof of Lemma \ref{lemma:bounds_five} in the rest of the section.

\begin{proof}[Proof of Lemma \ref{lemma:bounds_five}]
	We begin by proving the bound on $T_3$ which is the most involved and captures the idea behind the asymptotic behavior of the other terms as well. Throughout this proof, we set $f(x) = e^x/(1+e^x)$. Using a conditioning argument as in \eqref{eq:decomp1}, we have, for a pair $(i,j ) \in S\times S$, 
	\be 
	b^{(s)}(t)&=\frac{\lambda}{n}f(2A) (a^{(s)'}(t) )^2+\left(1-\frac{\lambda}{n}f(2A)\right)(a^{(s)''}(t))^2,\\
	a^{(s)'}(t)&=\Pbeta\left(\frac{\sum_{l \neq j}Y_{i,l}-\frac{\lambda}{2n}(n-2)}{\sqrt{(n-2)\frac{\lambda}{2n}\left(1-\frac{\lambda}{2n}\right)}}>t\sqrt{\frac{n-1}{n-2}}-\sqrt{\frac{1- \frac{\lambda}{2n}}{(n-2)\frac{\lambda}{2n}}}\right), \label{eq:cross1}\\
	a^{(s)''}(t)&=\Pbeta\left(\frac{\sum_{l \neq j}Y_{i,l}-\frac{\lambda}{2n}(n-2)}{\sqrt{(n-2)\frac{\lambda}{2n}\left(1-\frac{\lambda}{2n}\right)}}>t\sqrt{\frac{n-1}{n-2}}+\sqrt{\frac{\frac{\lambda}{2n}}{(n-2)(1- \frac{\lambda}{2n})}}\right). \\ \label{eq:cross2}
	\ee
	Using the same argument as \eqref{eq:decomp2}, we have, 
	\be 
	a^{(s)}(t)=\frac{\lambda}{n}f(2A)a^{(s)'}(t)+\left(1-\frac{\lambda}{n}f(2A)\right)a^{(s)''}(t). 
	\ee
	Therefore, we have, using \eqref{eq:var_decomp_five},
	\be
	T_3 
	= s(s-1)\frac{\lambda}{n}f(2A)\left(1-\frac{\lambda}{n}f(2A)\right)(a^{(s)'}(t)-a^{(s)''}(t))^2. \label{eq:T_3}
	\ee
	Combining \eqref{eq:cross1} and \eqref{eq:cross2}, we have, 
	\be
	a^{(s)'}(t)-a^{(s)''}(t) &=
	\Pbeta\left(\sum_{l \neq j}Y_{i,l}= \left \lfloor(n-1)\frac{\lambda}{2n}+ t\sqrt{(n-1)\frac{\lambda}{2n}\left(1-\frac{\lambda}{2n}\right)}\right\rfloor\right). 
	\ee
	We note that $\sum_{l \neq j}Y_{i,l} \stackrel{d}{=}Z_1+Z_2$ where $Z_1 \sim \Bin\left(s-2,\frac{\lambda}{n}f(2A)\right)$, $Z_2 \sim \Bin\left(n-s,\frac{\lambda}{n}f(A)\right)$, and $Z_1$, $Z_2$ are independent random variables. Setting $t_n=\left \lfloor(n-1)\frac{\lambda}{2n}+ t\sqrt{(n-1)\frac{\lambda}{2n}\left(1-\frac{\lambda}{2n}\right)}\right\rfloor$, by direct computation, we have, 
	\be
	t_n= (n-s)\frac{\lambda}{n}f(A) + C_n \sqrt{(n-s)\frac{\lambda}{n}f(A) \left(1-\frac{\lambda}{n}f(A) \right) \log (n-s) }, 
	\ee
	such that $C_n \to \sqrt{2r} - \sqrt{\frac{C^*}{8(1-\theta)}} >0 $ as $n\to \infty$. Therefore, applying  Lemma \ref{lemma:binomial_collection} Part (b, i), we have,
	\be
	a^{(s)'}(t)-a^{(s)''}(t)
	& \leq  \frac{n^{-(\sqrt{r}-\sqrt{C^*/16(1-\theta)})^2+o(1)}}{\sqrt{\lambda }}.\label{eqn:as_minus_asprime}
	\ee
	Plugging this bound back into \eqref{eq:T_3} immediately implies that 
	\be
	T_3 \leq n^{1-2\alpha - \left(\sqrt{2r} - \sqrt{C^*/(8(1-\theta))} \right)^2 + o(1)}. 
	\ee
	This completes the bound on $T_3$.  
	
	We next turn to the bound on $T_4$. We use the same argument in this case to derive the following expression for $T_4$ which is exactly  comparable to \eqref{eq:T_3}. We have,
	\be
	T_4 = (n-s)(n-s+1)\frac{\lambda}{n}f(0)\left(1-\frac{\lambda}{n}f(0)\right)(a^{(n-s)'}(t)-a^{(n-s)''}(t))^2, \\  \label{eq:T_4}
	\ee
	where $a^{(n-s)'}(t),\, a^{(n-s)''}(t)$ are defined analogous to $a^{(s)'}(t)$ and $a^{(s)''}(t)$ in \eqref{eq:cross1}, \eqref{eq:cross2} respectively.  The same argument as in the bound on $T_3$ now implies that  
	\be
	T_4 \leq n^{1- 2r + o(1)}. 
	\ee
	This gives us the desired bound on $T_4$. 
	
	Finally, we control the last covariance term $T_5$. In this case, we fix $(i,j) \in S \times S^c$. Recall the notations introduced in \eqref{eq:notation}. 
	We have, similar to \eqref{eq:decomp1},
	\be
	b^{(s,n-s)}(t) &= \frac{\lambda}{n} f(A) a^{(s,n-s)'}(t) a^{(n-s,s)'}(t) + \left(1- \frac{\lambda}{n}f(A) \right) a^{(s,n-s)''}(t) a^{(n-s,s)''}(t), \\
	a^{(s,n-s)'}(t)&=\Pbeta\left(\frac{\sum_{l \neq j}Y_{i,l}-\frac{\lambda}{2n}(n-2)}{\sqrt{(n-2)\frac{\lambda}{2n}\left(1-\frac{\lambda}{2n}\right)}}>t\sqrt{\frac{n-1}{n-2}}-\sqrt{\frac{1- \frac{\lambda}{2n}}{(n-2)\frac{\lambda}{2n}}}\right), \label{eq:cross3}\\
	a^{(s,n-s)''}(t)&=\Pbeta\left(\frac{\sum_{l \neq j}Y_{i,l}-\frac{\lambda}{2n}(n-2)}{\sqrt{(n-2)\frac{\lambda}{2n}\left(1-\frac{\lambda}{2n}\right)}}>t\sqrt{\frac{n-1}{n-2}}+\sqrt{\frac{\frac{\lambda}{2n}}{(n-2)(1- \frac{\lambda}{2n})}}\right). \\ \label{eq:cross4}
	\ee
	$a^{(n-s,s)'}(t)$ and $a^{(n-s,s)''}(t)$ are defined by switching the roles of $i,j$ in \eqref{eq:cross3} and \eqref{eq:cross4} respectively. 
	
	Similar to \eqref{eq:decomp2}, we have, 
	\be
	a^{(s)}(t) &= \frac{\lambda}{n}f(A) a^{(s,n-s)'}(t) + \left(1- \frac{\lambda}{n}f(A) \right) a^{(s,n-s)''}(t),\\
	a^{(n-s)}(t) &= \frac{\lambda}{n} f(A) a^{(n-s,s)'}(t) + \left(1- \frac{\lambda}{n}f(A) \right) a^{(n-s,s)''}(t). 
	\ee
	Therefore, we have, 
	\be
	T_5&= 2s(n-s) \frac{\lambda}{n}f(A) \left(1- \frac{\lambda}{n} f(A) \right) \big(a^{(s,n-s)'}(t) - a^{(s,n-s)''}(t) \big)\\ &\times \big( a^{(n-s,s)'}(t) - a^{(n-s,s)''}(t) \big). \\
	\label{eq:T_5}
	\ee
	We bound $\big( a^{(s,n-s)'}(t) - a^{(s,n-s)''}(t) \big)$ and $\big( a^{(n-s,s)'}(t) - a^{(n-s,s)''}(t) \big)$ exactly as \eqref{eqn:as_minus_asprime} to obtain
	\be
	\big( a^{(s,n-s)'}(t) - a^{(s,n-s)''}(t) \big) &\leq  \frac{n^{-(\sqrt{r}-\sqrt{C^*/16(1-\theta)})^2+o(1)}}{\sqrt{\lambda }},\\
	\big( a^{(n-s,s)'}(t) - a^{(n-s,s)''}(t) \big) &\leq  \frac{n^{-r + o(1)}}{\sqrt{\lambda}}. 
	\ee
	Plugging these bounds back into \eqref{eq:T_5} completes the proof.

	It remains to study  the diagonal terms $T_1,T_2$. Recall that for $i \in S$, 
	\be
	\ & a^{(s)}(t) \\&= \Pbeta\left( d_i > (n-1) \frac{\lambda}{2n} + t \sqrt{(n-1) \frac{\lambda}{2n} \left(1- \frac{\lambda}{2n} \right)}  \right), \\
	&= \Pbeta \left(d_i > (n-s) \frac{\lambda}{n} f(A)  
	+ C_n \sqrt{(n-s) \frac{\lambda}{n}f(A) \left(1- \frac{\lambda}{n} f(A) \right) \log (n-s)}  \right)  .
	\ee
	for a sequence $C_n \to \sqrt{2r} - \sqrt{\frac{C^*}{8(1-\theta)}}  >0$. We note that $d_i = Z_1 + Z_2$, where $Z_1 \sim \Bin(s, \frac{\lambda}{n} f(2A))$ and $Z_2 \sim \Bin(n-2, \frac{\lambda}{n} f(A))$ are independent random variables. Thus using Lemma \ref{lemma:binomial_collection} part (b, ii), we have, 
	\be 
	a^{(s)}(t) = n^{- \frac{1}{2} ( \sqrt{2r} - \sqrt{\frac{C^*}{8(1-\theta)}})^2 + o(1) }.
	\ee
	Plugging this bound back into the definition of $T_1$ gives us the desired result. The proof for $T_2$ is exactly similar to that of $T_4$ and is therefore omitted.  
\end{proof}

\subsection{Proof of \eqref{eq:alt_exp}}
Recall the definition of $a(t)$ from the proof of \eqref{eq:null_var}. In this case, we have, for $i \in S, j \in S^c$, 
\be
\Ebeta\left(HC(t)\right) &= s\Pbeta\left(D_i>t\right)+(n-s)\Pbeta\left(D_j>t\right)-na(t) \\
&\geq s \left(\Pbeta\left(D_i>t\right) - \Pzero\left(D_i>t\right) \right), 
\ee
using the fact that the vertex degrees are stochastically larger under the alternative. An application of Lemma \ref{lemma:binomial_collection} Part (a, ii) and (b, ii), implies that for $r > \frac{C^*}{16(1-\theta)}$
\be
\Ebeta\left(HC(t)\right) &\geq s  \left(n^{-\left(\sqrt{2r}-\sqrt{C^*/8(1-\theta)}\right)^2/2+o(1)}-n^{-r+o(1)}\right) \\ 
&\geq n^{1-\alpha-\left(\sqrt{2r}-\sqrt{C^*/8(1-\theta)}\right)^2/2+o(1)}.
\ee
This completes the proof. 

\section{Lower Bound Argument for Maximum Degree Test}\label{sec:appendix_maxdegree}
We recall that lower bound in Theorem \ref{thm:max_degree_test} contains a gap regarding $\log{n}\ll \lambda\lesssim \log^3{n}$. In this section we show that for $\log{n}\ll \lambda \lesssim \log^3{n}$, if one considers the Maximum Degree Test that rejects when $\dmax>np_n+\sqrt{\delta_n np_nq_n\log{n}}$, where $p_n=\lambda/2n$, $q_n=1-p_n$, and $\delta_n$ is some sequence of real numbers, then such tests are asymptotically powerless as soon as $C^*<\cmax(\alpha)$ defined above, if $\limsup \delta_n \neq 2$.  The case when $\limsup \delta_n = 2$ is extremely challenging, and the result of the testing problem depends on the rate of convergence of $\delta_n$ to $2$ along subsequences.

In particular, once again consider an alternative of the form $\P_{\beta,\lambda}$, where $\beta$ is given by 
$$\beta_i=A\text{ for }i\in S,\quad 0\text{ otherwise,}$$
where $A=\sqrt{C^*\frac{\log n}{\lambda}}$ for some positive $C^*$ with
$$C^*<\cmax(\alpha):=16(1-\theta)(1-\sqrt{1-\alpha})^2.$$
and $|S|=s=n^{1-\alpha}$.

Suppose to the contrary, there exists sequence of positive reals $\{k_n\}_{n\ge 1}$ such that
$$\lim_{n\rightarrow\infty}\Pzero(\max_{i\in [n]}d_i\le k_n)=1,\quad \lim_{n\rightarrow\infty}\Pbeta(\max_{i\in [n]}d_i\le k_n)=0.$$


Suppose $k_n=np_n+\delta_n\sqrt{\log{n}}\sqrt{np_nq_n}$, and let $\delsup=\limsup\delta_n$, $\delinf=\liminf\delta_n$. 

Suppose $\delsup\leq 0$. Then arguing along a subsequence
\be
\Pzero(\dmax> k_n)\geq \Pzero(d_1\leq np_n)\geq \frac{1}{4}
\ee
by looking at the median of $d_1$ under $\Pzero$, if $\lambda$ is even. 

Therefore, we can safely assume that $\exists$ $\delta>0$ such that $\delsup\geq \delta$. Subsequently, we work along the subsequence along which $\delta_n$ eventually becomes at least as large as $\delta$. 
Now if $i \in S:=\{l:\beta_l\neq 0
\}$, we have along an appropriate subsequence, for $X_n \perp Y_n$ with $X_n \sim \Bin\left(s-1,\frac{\lambda}{n}f(2A)\right)$, $Y_n \sim \Bin\left(n-s,\frac{\lambda}{n}f(A)\right)$ (where as usual $f(x)=\frac{e^x}{1+e^x}$)
\be 
\ & \Pbeta\left(d_i\leq k_n\right)=\Pbeta(X_n+Y_n\leq k_n)\\
&=1-\Pbeta(X_n+Y_n> k_n)\\
&\geq 1-\Pbeta(X_n+Y_n> np_n+\delta\sqrt{\log{n}}\sqrt{np_nq_n}) \\
& \geq 1-n^{-\left(\frac{\delta}{\sqrt{2}}-\sqrt{C^*/16(1-\theta)}\right)^2+o(1)}.
\ee
The last inequality follows by calculations similar to those leading to the behavior of $T_1$ in Lemma \ref{lemma:hc_main} upon invoking Lemma \ref{lemma:binomial_collection} Part (b, ii).

Similarly if $i \in S^c$, we have for $X_n \perp Y_n$ with $X_n \sim \Bin\left(s,\frac{\lambda}{n}f(A)\right)$, $Y_n \sim \Bin\left(n-s-1,\frac{\lambda}{n}f(0)\right)$ 
\be 
\Pbeta\left(d_i\leq k_n\right)&=\Pbeta(X_n+Y_n\leq k_n)\\
&=1-\Pbeta(X_n+Y_n> k_n)\\
&\geq 1-\Pbeta(X_n+Y_n> np_n+\delta\sqrt{\log{n}}\sqrt{np_nq_n}) \\
& \geq 1-n^{-\frac{\delta^2}{2}+o(1)}.
\ee
Therefore, by FKG inequality
\be 
\ &  \Pbeta\left(\dmax \leq k_n\right)\geq \prod\limits_{i=1}^n\Pbeta\left(d_i\leq k_i\right)\\
&\geq \exp\left((n-s)\log\left(1-n^{-\frac{\delta^2}{2}+o(1)}\right)+s\log\left(1-n^{-\left(\frac{\delta}{\sqrt{2}}-\sqrt{C^*/16(1-\theta)}\right)^2+o(1)}\right)\right)\\
&\sim \exp\left(-n^{1-\frac{\delta^2}{2}+o(1)}(1+o(1))-n^{1-\alpha-\left(\frac{\delta}{\sqrt{2}}-\sqrt{C^*/16(1-\theta)}\right)^2+o(1)}(1+o(1))\right).
\ee
Now suppose that $\delta>\sqrt{2}$. Then $1-\delta^2/2<0$ and note that $1-\alpha-\left(\frac{\delta}{\sqrt{2}}-\sqrt{C^*/16(1-\theta)}\right)^2<0$ since the function $$g(x)= 1-\alpha-\left(x-\sqrt{C^*/16(1-\theta)}\right)^2$$ is decreasing for for $x\geq \sqrt{C^*/16(1-\theta)}<1$, and $g(1)<0$. 
Therefore, $\delta\leq \sqrt{2}$. Let us consider the case when $\delta <\sqrt{2}$ first. 

Then there exists $\varepsilon>0$ such that $\delta=\sqrt{2(1-\varepsilon)}$. We now show that $\exists$ $\eta>0$ such that $\Pzero(\dmax>k_n)\geq \eta$, and thereby arriving at a contradiction. To show this we use a second moment method in conjunction with Paley-Zygmund Inequality.

In particular, define $\zeta_i=\I(d_i>k_n), \quad i=1,\ldots,n.$ Then by Paley-Zygmund Inequality
\be 
\Pzero\left(\sumn\zeta_i>\frac{1}{2}\Ezero(\sumn \zeta_i)\right)\geq \frac{1}{4}\frac{(\Ezero(\sumn\zeta_i))^2}{\Ezero(\sumn\zeta_i)^2}. \label{eqn:paleyzyg}
\ee
Now with a proof similar to that of \eqref{eq:alt_exp} in Lemma \ref{lemma:hc_main}
\be 
\Ezero(\sumn\zeta_i)&=n\Pzero(d_1>k_n)\gtrsim n^{1-(1-\varepsilon)^2+o(1)}. \label{eqn:paleyzyg_firstmoment}
\ee
Also by a proof similar to that of \eqref{eqn:null_covariance}
\be 
\ & \Ezero(\sumn\zeta_i)^2\\
&=n\Pzero(d_1>k_n)+n(n-a)(\Pzero(d_1>k_n,d_2>k_n)\\&-\Pzero(d_1>k_n)\Pzero(d_2>k_n))+n(n-1)\Pzero(d_1>k_n)\Pzero(d_2>k_n)\\
&\lesssim n^{1-(1-\varepsilon)^2+o(1)}+n^{1-2(1-\varepsilon)+o(1)}+n^{2-2(1-\varepsilon)^2+o(1)}.\label{eqn:paleyzyg_secondmoment}
\ee
Therefore, combining \eqref{eqn:paleyzyg}, \eqref{eqn:paleyzyg_firstmoment}, and \eqref{eqn:paleyzyg_secondmoment}, we have the existence of an $\eta>0$ such that
\be 
\Pzero\left(\sumn\zeta_i>\frac{1}{2}\Ezero(\sumn \zeta_i)\right)\geq \frac{4\eta}{4}=\eta. \label{eqn:paleyzyg}
\ee
This in turn implies the proof in the case of $\delta>\sqrt{2}$ since
\be 
\Pzero(\dmax>k_n)&=\Pzero(\sumn\zeta_i\geq 1)\\ &\geq \Pzero(\sumn\zeta_i\geq \frac{1}{2}\Ezero(\sumn \zeta_i))\geq \eta,
\ee
where the second to last inequality holds since by \eqref{eqn:paleyzyg_firstmoment}, $\Ezero(\sumn \zeta_i)\geq 1$ for sufficiently large $n$.

\end{document}